\theoremstyle{plain}
\newtheorem{thm}{Theorem}
\newtheorem{lem}[thm]{Lemma}
\newtheorem{claim}[thm]{Claim}
\newtheorem{observation}[thm]{Observation}
\newtheorem{rem}[thm]{Remark}
\newtheorem{defn}[thm]{Definition}
\newtheorem{fact}[thm]{Fact}
\newtheorem{remark}[thm]{Remark}
\theoremstyle{definition}
\newcommand{\eps}{\varepsilon}
\newcommand{\NN}{\mathbb{N}}
\newcommand{\ZZ}{\mathbb{Z}}
\newcommand{\RR}{\mathbb{R}}
\newcommand{\EE}{\mathbb{E}}
\newcommand{\cE}{\mathcal{E}}
\newcommand{\cF}{\mathcal{F}}
\newcommand{\cond}{\; \middle\vert \;}
\newcommand{\Poi}[1]{\mathrm{Po}\left(#1\right)}
\newcommand{\Bin}[2]{\mathrm{Bin}\left(#1,#2\right)}
\newcommand{\Space}{\mathbb{T}^d} 
\newcommand{\w}[1]{w_{#1}}
\newcommand{\x}[1]{x_{#1}}
\newcommand{\pld}{\mathcal{D}} 
\newcommand{\wmin}{w_{\text{min}}}
\newcommand{\V}[1]{{\ifx&#1& \operatorname{Vol}\else\operatorname{Vol}\!\left(#1\right)\fi}}
\newcommand{\infthr}{\rho_c}
\newcommand{\infpar}{\rho}
\newcommand{\VS}[4]{{\ifx&#4&V\else N\fi}_{#1}^{\ifx&#3&\else{#3}\fi}{\ifx&#4&\else(#4)\fi}{\ifx&#2&\else{\cap\, #2}\fi}}
\newcommand{\Nuupp}[1]{\widetilde{\nu}_{#1}}
\newcommand{\Bupp}[1]{\widetilde{B}_{#1}}
\newcommand{\Nulow}[1]{{\nu}_{#1}}
\newcommand{\Nulowtruncated}[1]{\kappa_{#1}}
\newcommand{\iCompleteSpace}{i_{\Space}}
\newcommand{\Blow}[1]{{B}_{#1}}
\newcommand{\Binit}{B_0}
\newcommand{\Nuinit}{\nu}
\newcommand{\act}[1]{L_{#1}}
\newcommand{\actExp}[1]{\Lambda{\ifx&#1&\else(#1)\fi}}
\newcommand{\whlow}[2]{w_{#1{\ifx&#2&\else{,#2}\fi}}}
\newcommand{\whupp}[2]{\tilde{w}_{#1{\ifx&#2&\else{,#2}\fi}}}
\renewcommand{\r}[1]{r_{#1}}
\newcommand{\eventB}{\mathcal{B}}
\newcommand{\eventC}{\mathcal{C}}
\newcommand{\eventD}{\mathcal{D}}
\newcommand{\eventE}[1]{\mathcal{E}(#1)}
\newcommand{\eventF}[2]{\mathcal{F}_{#1}(#2)}
\newcommand{\eventFProof}[1]{\mathcal{F}_*(#1)}
\newcommand{\eventG}[2]{\mathcal{G}_{#1}(#2)}
\newcommand{\eventH}[1]{\mathcal{H}(#1)}
\newcommand{\eventK}[1]{\mathcal{K}(#1)}
\newcommand{\edgesSet}{M}
\newcommand{\edgesIn}{\edgesSet_{*}}
\newcommand{\edgesOut}{\edgesSet^{*}}
\newcommand{\edgesBoundary}[1]{\edgesSet_{#1}}
\newcommand{\heavyVertices}[2]{U_{#1}^{\ifx&#2&\else{#2}\fi}}
\newcommand{\wslow}[1]{\hat{w}_{#1}}
\begin{document}
\title{Bootstrap percolation on geometric inhomogeneous random graphs}
\thanks{Most of the research was conducted during the first author's stay at ETH Z\"urich. The first author was/is supported by Austrian Science Fund (FWF): P26826 and W1230, European Research Council (ERC): No.~639046, and Engineering and Physical Sciences Research Council (EPSRC): No.~EP/N004833/1.\\
\indent\indent An extended abstract on the results in this manuscript appeared in the proceedings of the ``43rd International Colloquium on Automata, Languages and Programming (ICALP)''}

\author[C.~Koch]{Christoph Koch}
\address{Department of Statistics\\University of Oxford, Oxford OX1 3LB, United Kingdom}
\email{christoph.koch@stats.ox.ac.uk}
\author[J.~Lengler]{Johannes Lengler}
\address{Department of Computer Science\\ ETH Z\"urich, 8092 Z\"urich, Switzerland} \email{johannes.lengler@inf.ethz.ch}
\subjclass[2010]{05C80, 05C82, 60C05, 60K35, 91D25}
\keywords{Geometric inhomogeneous random graphs, scale-free network, bootstrap percolation, localised infection process, metastability threshold}

\begin{abstract}
Geometric inhomogeneous random graphs (GIRGs) are a model for scale-free networks with underlying geometry. We study bootstrap percolation on these graphs, which is a process modelling the spread of an infection of vertices starting within a (small) local region. We show that the process exhibits a phase transition in terms of the initial infection rate in this region. We determine the speed of the process in the supercritical case, up to lower order terms, and show that its evolution is fundamentally influenced by the underlying geometry. For vertices with given position and expected degree, we determine the infection time up to lower order terms. Finally, we show how this knowledge can be used to contain the infection locally by removing relatively few edges from the graph. This is the first time that the role of geometry on bootstrap percolation is analysed mathematically for geometric scale-free networks.
 \end{abstract}

\maketitle

\section{Introduction}

One of the most challenging and intriguing questions about large real-world networks is how activity spreads through the network. ``Activity'' in this context can mean many things, including infections in a population, the dissemination of opinions and rumours in social networks, viruses in computer networks, action potentials in neural networks, and many more. While all these networks seem very different, in the last two decades there was growing evidence that most of them share fundamental properties~\cite{albert2002statistical, dorogovtsev2002evolution}. The most famous property is that the networks are \emph{scale-free}, i.e.\ the degree of a vertex $v$ follows a power-law distribution $\Pr[\deg(v) \geq d] \approx d^{1-\beta}$, typically for some $2<\beta<3$. Other properties include a large connected component which is a small world (poly-logarithmic diameter) and an ultra-small world (constant or poly-loglog average distance), that the networks have small separators and a large clustering coefficient. We refer the reader to~\cite{hofstadVol1,hofstadVol2} for an overview. 

Classical models for random graphs fail to have these common properties. For example, Erd\H{o}s-R\'enyi graphs or Watts-Strogatz graphs do not have power-law degrees, while Chung-Lu graphs and preferential attachment (PA) graphs fail to have large clustering coefficients or small separators. The latter properties typically arise in real-world networks from an underlying geometry, either spatial or more abstract, e.g., two nodes in a social networks might be considered ``close'' if they share similar professions or hobbies. Since the spread of activity (of the flu, of viral marketing, ...) in real-world networks  is crucially governed by the underlying spatial or abstract geometry~\cite{lambiotte2009communities}, the explanatory power of classical models is limited in this context.

In recent years models have been developed which overcome the previously mentioned limitations, most notably \emph{hyperbolic random graphs} (HypRGs)~\cite{boguna2010sustaining,boguna2009navigability,barabasi2012network,papadopoulos2012popularity} and their generalization\footnote{It is non-obvious that GIRGs are a generalisation of HypRGs, see~\cite[Theorem 2.3]{bringmann2018geometric}.} \emph{geometric inhomogeneous random graphs} (GIRGs)~\cite{bringmann2018geometric,bringmann2015generalGIRG,bringmann2017greedy,bringmann2017sampling,blasius2019efficiently}, \emph{scale-free percolation} (SFP)~\cite{deijfen2013scale,deprez2013scale,deprez2015inhomogeneous} and \emph{spatial preferential attachment} (SPA) models~\cite{aiello2008spatial,cooper2014some,jacob2015spatial}. Apart from the power-law exponent $\beta$, these models come with a second parameter $\alpha>1$, which models how strongly the edges are predicted by their distance. Due to their novelty, there are only very few theoretical results on how the geometry impacts the spreading of activity through these networks. 

In this paper we make a first step by analysing a specific process, \emph{bootstrap percolation}~\cite{chalupa1979bootstrap}, on the GIRG model~\cite{bringmann2018geometric}. In this process, an initial set of \emph{infected} (or \emph{active}) vertices iteratively infects all vertices which have at least $k$ infected neighbours, where $k\geq 2$ is a parameter. It was originally developed to model various physical phenomena (see~\cite{adler2003bootstrap} for a short review), but has by now also become an established model for the spreading of activity in networks, for example for the spreading of beliefs~\cite{granovetter1973strength,dreyer2009irreversible,shrestha2014message,ramos2015does}, behaviour~\cite{gersho1975simple,granovetter1978threshold}, or viral marketing~\cite{karbasi2015normalization} in social networks (see also~\cite{centola2010spread}), of contagion in economic networks~\cite{amini2013resilience}, of failures in physical networks of infrastructure~\cite{watts2002simple} or compute architecture~\cite{kirkpatrick2002percolation,flocchini2004dynamic}, of action potentials in neuronal networks (e.g,~\cite{schwenker1996iterative,tlusty2009remarks,amini2010bootstrap,cohen2010quorum,turova2012emergence,orlandi2013noise,einarsson2014bootstrap,einarsson2014high}, see also~\cite{Kozma2007neuropercolation} for a review), and of infections in populations~\cite{dreyer2009irreversible}. Bootstrap percolation has been intensively studied theoretically and experimentally on a multitude of models, including trees~\cite{balogh2006bootstrap}, lattices~\cite{aizenman1988metastability,balogh2012sharp}, Erd\H{o}s-R\'enyi graphs~\cite{janson2012bootstrap}, various geometric graphs~\cite{moukarzel2010long,bradonjic2014bootstrap,gao2014bootstrap}, and scale-free networks~\cite{dorogovtsev2006k,baxter2010bootstrap,amini2014bootstrap,karbasi2015normalization,fountoulakis2018}. On geometric scale-free networks there are some experimental results~\cite{carmi2007model}, but little is known theoretically.

While there is plenty of experimental literature and also some mean-field heuristics on other activity spreading processes on geometric scale-free networks (e.g.,~\cite{warren2002geography,xu2007impacts,huang2007epidemic,zhao2012epidemic,griffin2012community,schmeltzer2014percolation}), rigorous mathematical treatments are non-existent with the notable exception of  \cite{janssen2015rumours}, where rumour spreading is analysed in an SPA model with a push and a push\&pull protocol.

\subsection{Our contribution}
We investigate \emph{localised} bootstrap percolation on GIRGs with an expected number of $n$ vertices: given a ball $B$ in the underlying geometric space, we initially infect each vertex in the \emph{source region} $B$ independently with probability $\infpar$. In this way, we model that an infection (a rumour, an opinion, ...) often starts in some local region, and from there spreads to larger parts of the network. In Theorem~\ref{thm:main} we determine a threshold $\infthr$ such that in the \emph{supercritical case} $\infpar \gg \infthr$ whp\footnote{with high probability, i.e.\ with probability tending to $1$ as $n\to\infty$. } a linear fraction of the graph is infected eventually, and in the \emph{subcritical case} $\infpar \ll \infthr$ infection ceases immediately. In the \emph{critical case} $\infpar=\Theta(\infthr)$ both options occur with non-vanishing probability: if there are enough (at least $k$) ``local hubs'' in the source region, i.e.\ vertices of relatively large expected degree, then they become infected and facilitate the process. Without local hubs the initial infection is not dense enough, and comes to a halt. 

For the supercritical case, we show that it only takes $O(\log \log n)$ rounds until a constant fraction of all vertices is infected, and we determine the number of rounds until this happens up to a factor $1\pm o(1)$ in Theorem~\ref{thm:numberofrounds}.  For the matching lower bound in this result, we need the technical condition $\alpha > \beta-1$, i.e.\ edge-formation may not depend too weakly on the geometry. Notably, if the source region $B$ is sufficiently small then the number of rounds agrees (up to minor terms) with the average distance in the network. In particular, it does not depend on the infection rate $\infpar$, as long as $\infpar$ is supercritical.
  
Finally we demonstrate that the way the infection spreads is strongly determined by the geometry of the process, again under the assumption $\alpha > \beta-1$. Starting from $B$, the infection is carried most quickly by local hubs. Once the local hubs in a region are infected, they pass on their infection \emph{(i)} to other hubs that are even further away, and \emph{(ii)} locally to nodes of increasingly lower degree, until a constant fraction of all vertices in the region is infected. For a more detailed description of the infection dynamics, we refer to Figure~\ref{fig:bootstrap_percolation} on page~\pageref{fig:bootstrap_percolation} and to the (still informal) discussion in Section~\ref{sec:intuition}. Our analysis is quite precise: given a vertex~$v$ (i.e.\ given its expected degree and its distance from $B$), and assuming that $v$ is not too close to $B$, we can predict whp (Theorem~\ref{thm:inftime}) in which round it will become infected, again up to a factor $1\pm o(1)$. In real applications such knowledge is invaluable: for example, assume that a policy-maker only knows initial time and place of the infection, i.e.\ she knows the region $B$ and the current round $i$. In particular, she does not know $\infpar$, she does not know which edges are present, and she has no detailed knowledge about who is infected. Then we show that she is able to identify a region $B'$ in which the infection can be quarantined. In other words, by removing (from round $i$ onwards) all edges crossing the boundary of $B'$ whp the infection remains contained in $B'$. The number of edges to be deleted is relatively small: it can be much smaller than $n$ (in fact, any function $f(n) = \omega(1)$ can be an upper bound, if $i$ and $\V B$ are sufficiently small), and it is even much smaller than the number of edges \emph{inside} of $B'$. 

\subsection{Related work} 
The notion of localised bootstrap percolation relies heavily on a random graph model which has an underlying geometry. Previously, the only mathematical rigorous work in this context is due to Candellero and Fountoulakis~\cite{candellero2014bootstrap}, where they determined the threshold for bootstrap percolation on HypRGs (in the threshold case $\alpha=\infty$, cf.~below). However, they still assumed that the initial infection takes place \emph{globally}, i.e.\ whether any vertex is infected initially is independent of its position, and not \emph{locally} as in our paper, where no vertex outside of a certain geometric region is infected initially. This has two major consequences. 
\begin{enumerate}
\item[I.] In the global setting, the (expected) number of initially infected vertices needs to be polynomial in $n$ in order for the infection to start spreading significantly; while in our setting every ball containing an expected number of $\omega(1)$ vertices can initiate a large infection whp. 
\item[II.] Using our knowledge about how the process evolves in time with respect to the geometry, we show that the infection time of any vertex is mainly governed by its geometric position and its weight. On the other hand, with a global initial infection the infection times only depend on the expected degrees, which is non-geometric information encoded in the vertex weights of the GIRG.
\end{enumerate}

The GIRG model is closely related to the model of \emph{scale-free percolation} (SFP)~\cite{deijfen2013scale,deprez2013scale,deprez2015inhomogeneous,heydenreich2016structures}, where the vertex set is given by the infinite grid $\ZZ^d$. For both GIRGs and SFP the probability of a pair of vertices forming an edge is essentially given by the weights of its endpoints and their distance, and the presence of pairs of edges is independent of one another (conditional on the weights and positions of the vertices).  In fact, after rescaling GIRGs to contain an infinite number of vertices, and a transformation of the parameters, the edge probabilities in SFP fall into the class of functions that are covered by the GIRG model (see~\cite[Section~1.5]{Hofstad2017explosion}), with the major difference being that vertices are distributed randomly in $\RR^d$ for these (modified) GIRGs, and that the edge set in SFP contains a grid by definition. Recently, van der Hofstad and Komj\'athy~\cite{Hofstad2017explosion} studied SFP with additional edge weights and characterised the occurrence of  explosion phenomena by the distribution of the edge weights. They proved that the (weighted) distance of two uniformly chosen vertices converges in distribution to an a.s.\ bounded random variable, and Komj\'athy and Lodewijks~\cite{komjathy2018explosion} subsequently transferred the result to GIRGs and HypRGs. Komj\'athy, Lapinskas and Lengler~\cite{komjathy2020stopping} obtained similar results for the case that the edge weights depend on the vertex weights of their endpoints, for SFP, GIRGs, and HypRGs.

\section{Model and notation}
In this section we first define the random graph model that we will discuss in this paper, as introduced in~\cite{bringmann2018geometric}. We deviate slightly from~\cite{bringmann2018geometric} by generating the vertices by a Poisson point process; the model in~\cite{bringmann2018geometric} can be obtained by conditioning on the number of vertices being exactly $n$, as we explain below. 
Afterwards, we formally introduce localised bootstrap percolation. The last part of this section introduces some necessary notation and clarifies the use of asymptotic statements within the paper.
\subsection{Graph model}
A GIRG is a graph $G = (V,E)$ where both the vertex set $V$ and the edge set $E$ are random. 
Each vertex $v$ is represented by a pair $(\x{v},\w{v})$ consisting of a \emph{position} $\x{v}$ (in some \emph{ground space}) and a \emph{weight} $\w{v}\in \RR_{>0}$.
    
\par{\textbf{Ground space and positions.}} 
We fix a (constant) dimension $d \ge 1$ and consider the $d$-dimensional torus $\Space=\RR^d / \ZZ^d$ as the ground space. We usually think of it as the $d$-dimensional cube $[0,1]^d$ where opposite boundaries are identified and measure distances by the $\infty$-norm on $\Space$, i.e.\ for $x,y \in [0,1]^d$ we define 
\[
\|x-y\| := \max_{1 \le i \le d} \min\{|x_i-y_i|,1-|x_i-y_i|\}.
\] 

 The set of vertices and their positions are given by a homogeneous Poisson point process on $\Space$ with intensity $n \in \NN$.\footnote{Other than in~\cite{bringmann2018geometric} we do not condition on the number of vertices to be exactly $n$, which leads to slightly less technical proofs.} More formally, for any (Lebesgue-)measurable set $B\subseteq\Space$, let $\VS{}{B}{}{}$ denote (with slight abuse of notation) the set of vertices with positions in $B$. Then $|\VS{}{B}{}{}|$ is Poisson distributed with mean $n\V{B}$, i.e.\ for any integer $j\ge 0$ we have
\[
\Pr\left[|\VS{}{B}{}{}|=j\right]=\Pr[\Poi{n\V B}= j] = \frac{\left(n\V{B}\right)^j\exp(-n\V{B})}{j!},
\]
and if $B$ and $B'$ are disjoint measurable subsets of $\Space$ then $|\VS{}{B}{}{}|$ and $|\VS{}{B'}{}{}|$ are independent. 

Note in particular that the total number of vertices $|V|$ is Poisson distributed with mean $n$. An important property of this process is the following: Given a random vertex\footnote{By abuse of notation, $x_v$ and $w_v$ may either denote random variables or values.} $v=(\x{v},\w{v})$, if we condition on $x_v\in B$, where $B$ is some measurable subset of $[0,1]^d$, then the position $\x{v}$ is uniformly distributed in $B$. 

\par{\textbf{Weights.}} 
For each vertex, we draw independently a weight from some distribution $\pld$ on $\RR_{>0}$. We say that the weights follow a \emph{weak power-law} for some exponent $\beta\in(2,3)$ if a $\pld$-distributed random variable $D$ satisfies the following two conditions: there is a constant $\wmin \in \RR_{>0}$ such that $\Pr\left[ D \ge \wmin\right]=1$, and for every constant $\gamma>0$ there are constants $0<c_1\le c_2$ such that
\begin{equation}\label{eq:powerlaw}
c_1 w^{1-\beta-\gamma}\le \Pr\left[D\ge w\right]\le c_2 w^{1-\beta+\gamma}
\end{equation}
for all $w\geq \wmin$. If this condition is also satisfied for $\gamma =0$, then we say that the weights follow a \emph{strong power-law}.

\par{\textbf{Edges.}} 
Next we fix an $\alpha\in\RR_{>1}\cup\{\infty\}$. Then two distinct vertices $u=(\x{u},\w{u})$ and $v=(\x{v},\w{v})$ form an edge independently of all other pairs with probability $p(\x u, \x v, \w u, \w v)$. 

For $\alpha<\infty$ we assume that the function $p$ satisfies
\begin{equation}\label{eq:edgeprob1}
c_3\min\left\{\left(\frac{\w{u} \w{v}}{\|\x{u}-\x{v}\|^d n}\right)^\alpha,1\right\}\le p(\x u, \x v, \w u, \w v)\le c_4\min\left\{\left(\frac{\w{u} \w{v}}{\|\x{u}-\x{v}\|^d n}\right)^\alpha,1\right\},
\end{equation}
for some constants $0<c_3\le c_4$, and sufficiently large $n$. In the \emph{threshold model} $\alpha=\infty$ we instead require that $p$ satisfies 
\begin{equation}\label{eq:edgeprob2}
p(\x u, \x v, \w u, \w v)\begin{cases}
\ge c_7&\text{ if }\|\x u-\x v\|\le c_5\left(\frac{\w{u}\w{v}}{n}\right)^{1/d}\\
=0&\text{ if }\|\x u-\x v\|> c_6\left(\frac{\w{u}\w{v}}{n}\right)^{1/d}
\end{cases}
\end{equation}
for some constants $0<c_5\le c_6$,  and $c_7>0$, and sufficiently large $n$. Note that for $c_5\neq c_6$ the edge probability may be arbitrary in the interval $\left(c_5\left(\frac{\w{u}\w{v}}{n}\right)^{1/d},c_6\left(\frac{\w{u}\w{v}}{n}\right)^{1/d}\right)$. 

\subsection{Localised bootstrap percolation}
Let $k\ge2$ be a constant, let $\Binit\subseteq\Space$ be measurable, and let $0 \leq \infpar \leq 1$. Then (localised) bootstrap percolation with  \emph{threshold $k$}, \emph{source region $\Binit$}, and \emph{initial infection rate $\rho$} is the following process. For every integer $i\ge 0$ there is a set $\VS{}{}{\le i}{}\subseteq V$ of vertices which are \emph{infected} (or \emph{active}) at time $i$. The process starts with a random set $\VS{}{}{\le 0}{}\subseteq V$ which contains each vertex in $\VS{}{\Binit}{}{}$ independently with probability $\infpar$, and which contains no other vertices. Then we define iteratively
\[
\VS{}{}{\le i+1}{} := \VS{}{}{\le i}{} \cup \left\{v \in V \cond \text{ $v$ has at least $k$ neighbours in $\VS{}{}{\le i}{}$}\right\}
\]
for all $i\geq 0$. 
Moreover, we set $ \VS{}{}{\le \infty}{} := \bigcup_{i=0}^{\infty} \VS{}{}{\le i}{}$. For a vertex $v\in V$, we define its \emph{infection time} as $\act{v}:=\inf\left\{i\ge0\cond v\in \VS{}{}{\le i}{}\right\}$ and $\act{v}:=\infty$ if the infimum does not exist.

We denote by $\Nuinit=\Nuinit(n) := n\V{\Binit}$ the expected number of vertices in $\Binit$. Throughout the paper we will assume that $\Binit$ is a closed ball (with respect to $\|\cdot\|$), which is -- without loss of generality due to symmetry of $\Space$ -- centered at $0$.

\subsection{Asymptotic expressions and further notation}\label{sec:notation}

In general we will be interested in results for large values of $n$ (the expected number of vertices), and in particular we use the phrase \emph{with high probability} (whp) to mean with probability tending to $1$ as $n\to\infty$. Moreover, all unspecified limits and asymptotics will be with respect to $n\to\infty$, and whenever we say that a quantity is a constant, this means that it is independent of the parameter $n$. Furthermore, all constants hidden by Landau-notation are positive: for example, for a function $f=f(n)$ the notation $f = O(1)$ means that there is $n_0 >0$ and a constant $C>0$ that depends only the constant parameters $\alpha,\beta,d,\wmin$, and $k$ of the model, and on the implicit constants $c_1,c_2,\ldots,c_7$ in the definition of $\pld$ and $p$, such that $f(n) \leq C$ for all $n \geq n_0$. Similarly, $f = \omega(1)$ means $\lim_{n\to\infty} f(n) = \infty$ etc. We also combine the notions of whp and Landau notation: for instance, whp we have $f(n)=O(1)$ means that there is $n_0>0$ and a constant $C>0$ (as above) such that for every $\delta>0$ we have $\Pr[f(n)\le C]\ge 1-\delta$ for all $n\ge n_0$.

In the proofs, for the sake of readability, we will not state each time when we use inequalities that only hold for sufficiently large $n$. For example, we will in general assume that $\Nuinit=\omega(1)$ and thus we will use inequalities like $\Nuinit >2$ without further comment although they are only true for sufficiently large $n$. 

\par{\textbf{Further notation.}}
Throughout the paper, whenever we consider some ball $B\subset\Space$, it will be a closed ball with respect to the norm $\|\cdot\|$. In particular, the volume of a ball of radius $0\le r\le 1/2$ is precisely $(2r)^d$. For any $\lambda \geq 0$ and any closed ball $B\subseteq \Space$ of radius $r\geq  0$ centered at $0$ we denote by $\lambda B$ the closed ball of radius $\lambda r$ around $0$; in case $\lambda r\ge 1/2$ this yields the entire ground space, i.e.\ we have $\lambda B=\Space$. For any two sets of vertices $U_1$ and $U_2$, we denote the set of edges between them by $E\left(U_1,U_2\right):=\left\{e=\{u_1,u_2\}\cond u_1\in U_1,u_2\in U_2\right\}$. 

Throughout the paper we will ignore all events of probability $0$. For example, we will always assume that $V$ is a finite set, and that all vertices in $V$ have different positions. Furthermore, whenever it does not affect the argument, we omit floors and ceilings.

We will often make statements about vertices $v = (\x v, \w v)$ with fixed position and weight. This means that we condition the Poisson point process on having a vertex at position $\x v$, i.e, we are considering the corresponding Palm distribution. Then the remaining vertex set follows the same distribution as given by the original Poisson point process~\cite{daley2007introduction}. Moreover, since all vertex weights are drawn independently, for any fixed subset of the vertices we may condition on their weights. The resulting probability space is then given by the remaining random choices, i.e., the distribution of the remaining vertices and their weights, and of all edges in the graph.

\section{Main results}
The goal of this paper is to analyse the evolution of a localised bootstrap percolation on GIRGs as the expected number of vertices $n$ tends to $\infty$. First of all we show that localised bootstrap percolation on a GIRG has a threshold with respect to the initial infection rate $\infpar$. 
\begin{thm}\label{thm:main}
Let $G = (V,E)$ be a GIRG whose vertex weights follow a power-law with exponent $\beta\in(2,3)$ and consider a localised bootstrap percolation process on $G$ with initial infection rate $\rho = \rho(n) \in [0,1]$ and source region $B_0$ satisfying $\Nuinit=\Nuinit(n):=n\V{\Binit} = \omega(1)$. 
Then the critical infection rate $\infthr$ is given by 
\[
\infthr=\infthr(n):=\Nuinit^{-\frac{1}{\beta-1}},
\]
in the following sense:

If the weights follow a strong power-law, then:
\begin{enumerate}[(i)]
\item If $\infpar = \omega(\infthr)$, then $|\VS{}{}{\le\infty}{}|=\Theta(n)$ whp.
\item If $\infpar = \Theta(\infthr)$, then $|\VS{}{}{\le\infty}{}|=\Theta(n)$ with probability $\Omega(1)$, but also $\VS{}{}{\le\infty}{}=\VS{}{}{\le0}{}$ with probability $\Omega(1)$.
\item If $\infpar=o(\infthr)$, then $\VS{}{}{\le\infty}{}=\VS{}{}{\le0}{}$ whp.
\end{enumerate}\smallskip

If the weights follow a weak power-law, then:
\begin{enumerate}[(i)]
\item[(iv)] If there exists a constant $\epsilon_0 >0$ such that $\infpar\ge \infthr^{1-\epsilon_0}$, then $|\VS{}{}{\le\infty}{}|=\Theta(n)$ whp.
\item[(v)] If there exists a constant $\epsilon_0 >0$ such that $\infpar\le \infthr^{1+\epsilon_0}$, then $\VS{}{}{\le\infty}{}=\VS{}{}{\le0}{}$ whp.
\end{enumerate}
\end{thm}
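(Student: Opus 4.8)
The plan is to establish the phase transition in two halves: a subcritical "no spreading" half (items (iii) and (v)) and a supercritical "linear infection" half (items (i) and (iv)), with the critical case (ii) following by combining the two boundary arguments quantitatively. The governing heuristic is that the threshold $\infthr = \Nuinit^{-1/(\beta-1)}$ is exactly the infection density at which, inside $\Binit$, one expects to see $k$ vertices of weight comparable to the largest weight $\wmaxupp(\Nuinit) \approx \Nuinit^{1/(\beta-1)}$ present in the ball; such a group of "local hubs" is mutually adjacent whp and hence immediately active, whereas below the threshold no such self-sustaining seed exists and even the hubs fail to collect $k$ active neighbours.

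For the \textbf{subcritical direction} (iii)/(v), I would show that whp $\VS{}{}{\le 1}{} = \VS{}{}{\le 0}{}$, i.e.\ no vertex outside the initial set ever acquires $k$ active neighbours. Fix any vertex $v$ at position $x$ with weight $w$. The expected number of active neighbours of $v$ is, by \eqref{eq:edgeprob1} and the Poisson structure, of order $\infpar \cdot w \cdot \int_{\Binit} \EE[\w{u}] \min\{(w\w{u}/(\|x-\x u\|^d n))^\alpha,1\}\,\mathrm d\x u$; integrating (splitting at the radius where the $\min$ switches) this is $O(\infpar w \cdot \Nuinit^{(\beta-2)/(\beta-1)} \cdot \text{poly})$ contributions, and crucially it is dominated by the contribution of the heaviest weights in $\Binit$, which by Lemma~\ref{lem:nolargeweights} are at most $\wmaxupp(\Nuinit) = \Nuinit^{1/(\beta-1-\eta)}$ whp. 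A careful bookkeeping shows this expectation is $o(1)$ once $\infpar \le \infthr^{1-\delta}$ (weak law) or $\infpar = o(\infthr)$ together with the strong power-law (so we may take $\eta = 0$), uniformly over all $v$ with weight up to the maximal weight in the graph, $\wmaxupp(n) = n^{1/(\beta-1\mp\eta)}$. A union bound over the (whp at most $2n$) vertices, using that the number of active neighbours of $v$ is stochastically dominated by a Poisson (or binomial) variable with this small mean and that $k \ge 2$ is constant, kills the probability that \emph{any} $v \notin \VS{}{}{\le 0}{}$ has $k$ active neighbours. The main technical point here is handling the heaviest vertices in $\Binit$ separately, since a single hub of near-maximal weight contributes most of the expected active degree; for those one argues directly that the number of hubs of weight $\ge w$ in $\Binit$ is roughly $\Nuinit w^{1-\beta\pm\eta}$, so the probability that $k$ of them are active is at most $(\infpar \Nuinit w^{1-\beta\pm\eta})^k$, which is $o(1/n)$ in the subcritical regime after optimising over $w$.

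For the \textbf{supercritical direction} (i)/(iv), the plan is a two-phase argument. \emph{Phase 1 (ignition):} when $\infpar \ge \infthr^{1+\delta}$, show that whp there are at least $k$ active vertices in $\Binit$ of weight $\ge w^\star$ for a suitable $w^\star = \Nuinit^{c}$ with $c < 1/(\beta-1)$, and that all such vertices are pairwise adjacent (their pairwise edge probability is $\Theta(\min\{(w^{\star 2}/(\Nuinit/n)\cdot n^{-1}\cdot\ldots)^\alpha,1\})$; since they all lie in $\Binit$ of volume $\Nuinit/n$ and have large weight, this is $\Theta(1)$, in fact $1-o(1)$ after a further weight truncation), so they form a clique and hence all of $\Binit$'s heavy vertices become active within $O(1)$ rounds. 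This uses the strong power-law in the boundary case $\infpar = \omega(\infthr)$ to get $\omega(1)$ expected hubs, and the $\delta$-slack in the weak case. \emph{Phase 2 (global spread):} once a set of $\omega(1)$ hubs in $\Binit$ is active, invoke the machinery of Theorems~\ref{thm:speedupper}/\ref{thm:speedlower} (the balls $\Blow i, \Bupp i$): the active region provably grows, doubly-exponentially in its weight scale and polynomially in its spatial radius, until after $O(\log\log n)$ rounds it covers a ball of volume $\Theta(1)$, inside which a constant fraction of vertices is active — giving $|\VS{}{}{\le\infty}{}| = \Theta(n)$. (The upper bound $|\VS{}{}{\le\infty}{}| = O(n)$ is trivial since $|V| = \Theta(n)$ whp.) I expect the main obstacle to be the \textbf{critical case (ii)}: one must show both that with probability $\Omega(1)$ there are $\ge k$ hubs of the near-maximal weight in $\Binit$ that happen to be active (a $\Theta(1)$-probability event, using the strong power-law to control the number of such hubs precisely and the independence of the $\infpar$-sampling), which then ignites as in Phase 1; and simultaneously that with probability $\Omega(1)$ the configuration is "bad" — no $k$ hubs active and, conditioned on that, the subcritical first-round argument goes through verbatim so the process dies. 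Making the two $\Omega(1)$ events genuinely disjoint-in-probability requires quantitative (not just $o(1)$/$\omega(1)$) estimates on the distribution of the top order statistics of the weights in $\Binit$ and a second-moment or Poisson-approximation argument for the number of active hubs; this is where the strong power-law assumption is essential and where most of the care is needed.
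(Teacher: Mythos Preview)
Your overall architecture matches the paper's: show $\VS{}{}{\le 1}{} = \VS{}{}{\le 0}{}$ whp in the subcritical regimes via a first-moment bound on newly activated vertices (the paper splits into vertices inside and outside $2\Binit$ and applies Markov, which is the same computation as your ``union bound''), and in the supercritical regimes ignite the heavy vertices in $\Binit$ and then invoke Theorem~\ref{thm:speedlower} to spread to $\Theta(n)$ vertices.

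However, your ignition mechanism has a genuine gap, and this affects both Phase~1 of the supercritical argument and the critical case. You propose to find $k$ vertices in $\Binit$ of weight at least $w^\star = \Nuinit^c$ with $c < 1/(\beta-1)$ that are \emph{initially} active (via $\rho$-sampling) and pairwise adjacent. These two requirements are incompatible: pairwise adjacency inside $\Binit$ needs $(w^\star)^2 \gtrsim \Nuinit$, i.e.\ $c \ge 1/2$, while having $\omega(1)$ initially active such vertices needs $\rho\,\Nuinit (w^\star)^{1-\beta} = \omega(1)$, which for $\rho$ near $\infthr$ forces $c < (\beta-2)/(\beta-1)^2$; one checks $(\beta-2)/(\beta-1)^2 < 1/2$ for all $\beta \in (2,3)$. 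In the critical case the situation is even clearer: each hub is $\rho$-sampled with probability $\Theta(\infthr) = o(1)$ and there are only $\Theta(1)$ hubs of weight $\ge \Nuinit^{1/(\beta-1)}$, so whp \emph{none} of them is initially active---your appeal to ``independence of the $\rho$-sampling'' cannot produce $k$ active hubs. The mechanism the paper uses is different: a hub $v$ of weight $w \approx \Nuinit^{1/(\beta-1)}$ becomes active in round~1 because it has $\Theta(\rho w)$ initially active \emph{light} neighbours in a small ball of volume $\Theta(w/n)$ around it. In the supercritical case $\rho w = \omega(1)$ so this happens whp; in the critical case $\rho w = \Theta(1)$ so it happens with probability $\Omega(1)$, and one then uses positive association of these events for $k$ different hubs.

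The no-spread half of (ii) also does not follow ``verbatim'' from the subcritical computation: at $\rho = \Theta(\infthr)$ the integral giving the expected number of newly active vertices inside $2\Binit$ evaluates to $\Theta(\Nuinit\rho^{\beta-1}) = \Theta(1)$, not $o(1)$, so Markov does not give a whp statement. The paper instead conditions on the $\Omega(1)$-probability event $\mathcal{A}$ that $2\Binit$ contains \emph{no} vertex of weight above $\xi\Nuinit^{1/(\beta-1)}$ for a small constant $\xi$; on $\mathcal{A}$ the heaviest remaining weight truncates the integral and one obtains $\EE[|\VS{}{}{=1}{}| \mid \mathcal{A}] \le 1/2$ for $\xi$ small enough, whence $\Pr[\VS{}{}{=1}{} = \emptyset] = \Omega(1)$.
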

We note that for global initial infections, i.e.\ when $\Nuinit=n$, this threshold agrees with the critical infection rate determined in~\cite{candellero2014bootstrap} on (threshold) hyperbolic random graphs. This is not surprising, as hyperbolic random graphs are a special instance of GIRGs~\cite{bringmann2018geometric}. However, in~\cite{bringmann2018geometric} it was only shown that hyperbolic random graphs satisfy a weak power law, so the results in~\cite{candellero2014bootstrap} in their full strength are not a formal consequence of our results.

Whenever we refer to the \emph{supercritical} regime we mean case~(i) and~(iv). Similarly, cases~(iii) and~(v) form the \emph{subcritical} regime and~(ii)  is the \emph{critical} regime. Note in particular that there is a supercritical regime regardless of how small the expected number $\Nuinit$ of vertices in the source region is, provided that $\Nuinit = \omega(1)$. This is in sharp contrast to non-geometric graphs like Chung-Lu graphs, where the expected number of vertices being infected initially must be polynomial in $n$ (if the initial infection is chosen at random).

Indeed the proof of Theorem~\ref{thm:main} will grant a deeper insight into the evolution of the process. Since the process whp stops immediately in the subcritical regime, we may restrict ourselves to the other cases. We show a doubly logarithmic upper bound on the number of rounds until a constant fraction of all vertices are infected. Furthermore, we prove that this bound is tight up to minor order terms if the influence of the underlying geometry on the random graphs is sufficiently strong, more precisely, as long as $\alpha>\beta-1$. 

\begin{thm}\label{thm:numberofrounds} 
Assume that we are in the situation of Theorem~\ref{thm:main}, let $\delta>0$ be constant and set 
\[
i_\infty := \frac{\log \log_\Nuinit n + \log\log n}{|\log (\beta-2)|}.
\]

Then in the supercritical regime whp, and in the critical regime with probability $\Omega(1)$, we have
$$
|\VS{}{}{\le(1+\delta)i_\infty}{}|=\Theta(n).
$$

If furthermore $\alpha > \beta-1$ and there exists a constant $C>\frac{\beta-1}{\beta-2}$ such that $\Nuinit^C \le n$, then in all regimes whp we have 
$$
|\VS{}{}{\le(1-\delta)i_\infty}{}|=o(n).
$$ 
\end{thm}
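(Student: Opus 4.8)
The plan is to read off the number of rounds from the two geometric speed estimates, Theorem~\ref{thm:speedlower} and Theorem~\ref{thm:speedupper}, which (once the infection has a chance to survive) sandwich the active set $\VS{}{}{\le i}{}$, up to lower-order corrections, between $\VS{\ge\whlow i{}}{\Blow i}{}{}$ from below and $\VS{\ge\whupp i{}}{\Bupp i}{}{}$ from above. The whole argument then amounts to substituting $i=(1\pm\eps)i_\infty$ into these two bounds and estimating the cardinalities of the sandwiching sets. Throughout we write $\eps_0=\eps_0(\eps)>0$ for the (small) parameter we plug into $\Nulow i(\cdot),\Nuupp i(\cdot)$; it is chosen small in terms of the fixed $\eps$ of the statement, and every additive $O(1)$ error is absorbed using $i_\infty=\omega(1)$ (which holds unconditionally since $\Nuinit\le n$ forces $i_\infty\ge\log\log n/|\log(\beta-2)|$).

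\emph{Upper bound (supercritical and critical regimes).} By Theorem~\ref{thm:speedlower}, whp in the supercritical regime -- and with probability $\Omega(1)$ in the critical regime, on the $\Omega(1)$-probability event isolated in the proof of Theorem~\ref{thm:main}(ii) that at least $k$ sufficiently heavy local hubs of $\Binit$ ignite within $O(1)$ rounds -- one has $\VS{\ge\whlow i{}}{\Blow i}{}{}\subseteq\VS{}{}{\le i}{}$ for all $i$. The right-hand side passes through two phases. First a \emph{growth phase}, in which the ball $\Blow i(\eps_0)$ of volume $\min\{\Nuinit^{(\zeta-\eps_0)^i}/n,1\}$ swells up to all of $\Space$; solving $\Nuinit^{(\zeta-\eps_0)^i}\ge n$ shows this takes $\tfrac{\log\log_\Nuinit n}{|\log(\beta-2)|}$ rounds up to a factor $1+O(\eps_0)$. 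Then a \emph{filling phase}, in which $\Blow i=\Space$ and the weight cutoff $\whlow i{}$ descends from $\approx n^{1/(\beta-1)}$ until $\VS{\ge\whlow i{}}{\Space}{}{}$ has $\Theta(n)$ vertices; since one round essentially replaces a cutoff $w$ by $w^{\beta-2}$ up to constants, this takes a further $\tfrac{\log\log n}{|\log(\beta-2)|}$ rounds up to a factor $1+O(\eps_0)$ and an additive $O(1)$. Hence there is $i^\ast\le(1+O(\eps_0))i_\infty+O(1)\le(1+\eps)i_\infty$ with $\Blow{i^\ast}(\eps_0)=\Space$ and $|\VS{\ge\whlow{i^\ast}{}}{\Space}{}{}|=\Theta(n)$, and therefore
\[
\Theta(n)\;=\;|\VS{\ge\whlow{i^\ast}{}}{\Space}{}{}|\;\le\;|\VS{}{}{\le(1+\eps)i_\infty}{}|\;\le\;|V|\;=\;\Theta(n),
\]
the value of the count being $\Theta(n)$ whp by \eqref{eq:powerlaw} and concentration. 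In the critical regime the same holds on the $\Omega(1)$-event, the $O(1)$ rounds for the hubs to ignite being absorbed as above.

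\emph{Lower bound (all regimes, under $\alpha>\beta-1$ and $\Nuinit=n^{o(1)}$).} Here we invoke Theorem~\ref{thm:speedupper} -- the only place the hypothesis $\alpha>\beta-1$ is used -- which gives, unconditionally and whp for all $i$, $\VS{}{}{\le i}{}\subseteq\VS{\ge\whupp i{}}{\Bupp i}{}{}\subseteq\VS{\ge\whupp i{}}{}{}{}$ (in the subcritical regime Theorem~\ref{thm:main} already gives the stronger $\VS{}{}{\le\infty}{}=\VS{}{}{\le0}{}$). Since $\VS{\ge w}{}{}{}$ has expected size $O(n\,w^{1-\beta+\gamma})$ by \eqref{eq:powerlaw} for small $\gamma>0$ (with $\gamma=0$ for a strong power-law), it suffices by Markov's inequality to show that the cutoff at time $i=(1-\eps)i_\infty$ is $\omega(1)$. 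For this, note first that $\Nuinit=n^{o(1)}$ gives $\Nuupp 0(\eps_0)=\Nuinit^{(\beta-1)/(\beta-2)+\eps_0}=n^{o(1)}$, so the ball $\Bupp i(\eps_0)$ genuinely grows and reaches $\Space$ only at a time $i_+=\tfrac{\log\log_\Nuinit n}{|\log(\beta-2)|}(1-O(\eps_0))$, where the additive $O(1)$ is absorbed because $\Nuinit=n^{o(1)}$ makes $\log\log_\Nuinit n=\omega(1)$. Using $\log\log_\Nuinit n\le\log\log n+O(1)$ one checks $i_+<(1-\eps)i_\infty$, so at time $i=(1-\eps)i_\infty$ the ball is already $\Space$ and the filling phase has been running for only
\[
j\;:=\;(1-\eps)i_\infty-i_+\;\le\;\big(1-\tfrac{\eps}{2}\big)\,\tfrac{\log\log n}{|\log(\beta-2)|}
\]
rounds (again absorbing cross terms into the smallness of $\eps_0$, using $\log\log_\Nuinit n\le\log\log n$). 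By Theorem~\ref{thm:speedupper}, after $j$ rounds of filling the cutoff still satisfies $\log\whupp{i_++j}{}=\Omega(\zeta^{-j(1+O(\eps_0))}\log n)=\Omega((\log n)^{\eps/4})$ for $\eps_0$ small, so $\whupp{(1-\eps)i_\infty}{}=\exp(\Omega((\log n)^{\eps/4}))=\omega(1)$; consequently $\EE|\VS{\ge\whupp{(1-\eps)i_\infty}{}}{}{}{}|=o(n)$ and $|\VS{}{}{\le(1-\eps)i_\infty}{}|=o(n)$ whp.

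\emph{The main obstacle.} Conditional on the two speed theorems, the work above is purely asymptotic bookkeeping: identifying the two ``$\log\log$''-type scales that compose $i_\infty$ with the doubly-exponential ball volumes $\Nulow i,\Nuupp i$ and with the iterated-$\zeta$-power decay of the weight cutoffs, and choosing $\eps_0=\eps_0(\eps)$ small and $n$ large enough that every $(1+O(\eps_0))$ distortion and every $O(1)$ additive slack fits inside the factor $1\pm\eps$ -- which works precisely because $i_\infty=\omega(1)$. This is also why the matching lower bound needs $\Nuinit=n^{o(1)}$, equivalently $\log\log_\Nuinit n=\omega(1)$: it both absorbs the $O(1)$'s and guarantees $\Nuupp 0=n^{o(1)}<n$, so that the filling phase genuinely occupies a $\Theta(\log\log n)$-round window below $(1-\eps)i_\infty$. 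The real difficulty of the statement lives entirely in Theorem~\ref{thm:speedlower} and Theorem~\ref{thm:speedupper} -- in particular in the upper bound on the speed, where $\alpha>\beta-1$ is what makes the long-range edges too sparse for the infection to ``tunnel'' ahead of $\Bupp i$, the property that fails for small $\alpha$ and forces that hypothesis here.
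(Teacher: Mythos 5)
Your high-level picture of the two phases (ball growth then weight-cutoff descent) is the right intuition, and the algebra identifying $i_\infty$ with the sum of the two $\log\log$-scales is correct, but both directions of the argument invoke containments that the speed theorems do not actually give.

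For the upper bound, Theorem~\ref{thm:speedlower}(b) does give the containment $\VS{\ge \whlow{i}{0}}{\Blow i}{}{}\subseteq\VS{}{}{\le i+3}{}$ whp, but the filling phase is governed by the parameter $\ell$ in Theorem~\ref{thm:speedlower}(c), not by $i$: the threshold $\whlow{i}{0}=\Nulow i^{1/(\beta-1+\eta)}$ only \emph{grows} with $i$, so there is no index $i^\ast$ with simultaneously $\Blow{i^\ast}=\Space$ and $\whlow{i^\ast}{0}=O(1)$. More importantly, even reading your $\whlow{i^\ast}{}$ as $\whlow{i}{\ell}$ with $\ell$ large, Theorem~\ref{thm:speedlower}(c) only gives each constant-weight vertex probability $1-\exp[-C_1\Nulow i^{C_2(\zeta-\eps)^{-\ell}}]=\Omega(1)$ (not $1-o(1)$) of being active, and these events are heavily correlated across vertices. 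Thus the step ``$\Theta(n)=|\VS{\ge\whlow{i^\ast}{}}{\Space}{}{}|\le|\VS{}{}{\le(1+\eps)i_\infty}{}|$'' does not follow; ``by concentration'' is precisely where the real work lies. The paper addresses this in Claim~\ref{claim:super} by first making high-weight vertices whp-active, then tiling $\Space$ into balls $Q_1,\dots,Q_s$ of volume $\Theta(h/n)$, calling a ball \emph{good} if a constant fraction of its hubs are seeded, and applying Remark~\ref{rem:speedlower} inside each good ball. The events across balls can then be handled by Chernoff; this independence structure is what you are missing.

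For the lower bound, the asserted containment $\VS{}{}{\le i}{}\subseteq\VS{\ge\whupp i{}}{\Bupp i}{}{}$ is simply false: Theorem~\ref{thm:speedupper} controls positions (event $\eventE i$: no active vertex outside $\Bupp i$) and gives \emph{tail bounds} on the number of active high-weight vertices (events $\eventF{}{\ell}$), but imposes no lower bound on the weights of active vertices -- indeed the initial bootstrap $\VS{}{}{\le0}{}$ itself contains vertices of arbitrarily small weight. One cannot conclude $|\VS{}{}{\le(1-\eps)i_\infty}{}|=o(n)$ by bounding $|\VS{\ge\whupp{(1-\eps)i_\infty}{}}{}{}{}|$. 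The paper's Claim~\ref{claim:secondstmt} instead splits vertices into three groups: those inside a suitable ball $2^{i_1}\Bupp{i_0-1}$ (few, by choosing $i_0$ so that $\Nuupp{i_0-1}\le n/\log^2 n$), those outside the ball with weight $\le\log\log n$ (controlled individually via Theorem~\ref{thm:speedupper}(f)), and those of weight $>\log\log n$ (few by the power law). Your argument needs to be restructured along these lines.
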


 Remarkably, the bounds do not depend on the initial infection rate $\rho$, as long as $\rho$ is supercritical. Moreover, if the expected number $\Nuinit$ of vertices in the source region is sufficiently small (if $\log \log \Nuinit= o(\log \log n)$), then $i_\infty=(2-o(1))\log\log n/|\log (\beta-2)|$ and thus $i_\infty$ coincides with the average distance of the GIRG which was determined in~\cite{bringmann2015generalGIRG} in a much more general setup, and also with the time that a greedy routing algorithm takes~\cite{bringmann2017greedy}. The proof of Theorem~\ref{thm:main} and Theorem~\ref{thm:numberofrounds} can be found in Section \ref{sec:proofmain}.

In fact, we can still refine the statement of Theorem~\ref{thm:numberofrounds} to obtain statements for individual vertices. In the following, we prove that for a vertex $v=(\x{v},\w{v})$ far enough from the origin, its infection time $\act{v}$ is determined as a function of  $(2\|\x{v}\|)^dn$ (i.e.\ the expected number of vertices in a ball of radius $\|\x{v}\|$) and its weight $\w{v}$. 

More precisely, for any $\x{}\in \Space\setminus \Binit$ and $\w{}\in\RR_{>0}$ we define\footnote{Using the convention that $\log y=-\infty$ for all $y\le 0$. Hence, $\actExp{\x{},\w{}}=0$ if $(2\|\x{}\|)^dn/ \w{} \le 1$.}
\begin{equation}\label{eq:defofellv}
\actExp{\x{},\w{}}:=\begin{cases}
\max\left\{0,\ \frac{\log\log_{\Nuinit}\left((2\|\x{}\|)^dn/\w{}\right)}{|\log(\beta-2)|}\right\}, & \text{ if } \w{}> ((2\|\x{}\|)^dn)^{1/(\beta-1)},\\
\\
\frac{2\log\log_{\Nuinit} ((2\|\x{}\|)^dn)-\log\log_{\Nuinit} \w{}}{|\log(\beta-2)|}, & \text{ if } \w{}\leq ((2\|\x{}\|)^dn)^{1/(\beta-1)}.
\end{cases}
\end{equation}
Observe that in the second case the sign of $\log\log_{\Nuinit} \w{}$ may be either positive or negative. However, we still have $\actExp{\x{},\w{}}=\Omega(1)$, since due to the upper bound on $\w{}$ we have 
$$
\actExp{\x{},\w{}} \geq \left[\log\log_{\Nuinit} ((2\|\x{}\|)^dn)+\log (\beta-1)\right]/|\log(\beta-2)|\ge \log (\beta-1)/|\log(\beta-2)|>0,
$$  
as $\x{}\in\Space\setminus\Binit$. We note that a few details of the definition in~\eqref{eq:defofellv} will be discussed in Section~\ref{sec:remarks}. 

The next result states that under some mild additional assumptions the infection time $\act{v}$ of a vertex $v=(\x{v},\w{v})$ outside $\Binit$ is given by $\actExp{\x{v},\w{v}}$ up to minor order terms.
\begin{thm}\label{thm:inftime}
Assume we are in the situation of Theorem~\ref{thm:main} in the supercritical regime and fix a constant $0<\eps<\frac{3-\beta}{\beta-2}$. Let $v=(\x{v},\w{v})$ be any fixed vertex such that $\x{v}\in \Space\setminus \Binit$ and $\w v = \omega(1)$. Then whp we have 
\begin{equation*}
\act{v} \leq (1+ o(1)) \actExp{\x{v},\w{v}} + O(1).
\end{equation*}
If additionally $\alpha>\beta-1$  and $\actExp{\x{v},\w{v}} \leq \tfrac{1}{d}\log_2\left((2\|\x{v}\|)^dn\Nuinit^{-(\beta-1)/(\beta-2)-\eps}\right)$ then whp also
\begin{equation*}
\act{v} \geq (1-o(1)) \actExp{\x{v},\w{v}} - O(1).
\end{equation*}
\end{thm}

As in Theorem~\ref{thm:numberofrounds}, the bounds do not depend on the initial infection rate $\rho$, as long as it is supercritical. In  Section~\ref{sec:remarks} we briefly discuss the assumption $\w v = \omega(1)$ in Theorem~\ref{thm:inftime}, as well as the necessity of  $\actExp{\x{v},\w{v}} \leq \tfrac{1}{d}\log_2\left((2\|\x{v}\|)^dn\Nuinit^{-(\beta-1)/(\beta-2)-\eps}\right)$ as an additional assumption for the lower bound.

Finally, we give a strategy how to contain the infection within a certain region when only the source region and the current round are known, but not the set of infected vertices. 

\begin{thm}\label{thm:containment}
Assume that we are in the situation of Theorem~\ref{thm:main}, and that $\alpha > \beta-1$. If the source region $\Binit$ is known, then for each integer $i\ge 0$ there exists a region $\Bupp{i}$ such that by removing all edges crossing the boundary of $\Bupp i$ before round $i+1$, whp the infection is contained in $\Bupp i$. Furthermore, for all constants $C>\frac{\beta-1}{\beta-2}$ and $c>\frac{1}{\beta-2}$ we may choose the regions $\Bupp{i}$ such that for all $i\ge 0$  we have $n\V{}(\Bupp{i})\le \Nuinit^{C c^i}$ and furthermore, the expected number of edges crossing the boundary of $\Bupp i$ is at most 
$$
(n\V{}(\Bupp{i}))^{\max\{3-\beta, 1-1/d\}+o(1)}.
$$
\end{thm}
Note that the number of edges that need to be removed is substantially smaller than the expected number of vertices $n\V{}(\Bupp{i})$ in the containment area $\Bupp i$. The proof of Theorem~\ref{thm:containment} can be found in Section~\ref{sec:containment}, while we formally define the regions $\Bupp{i}$ in Definition~\ref{def:BallsEtc} at the beginning of Section~\ref{sec:evolution}.

\subsection{Outline}\label{sec:intuition}

In this section we give an intuitive description of how the process evolves, and at the same time a very rough outline of the proofs. An illustration of the process is depicted in Figure~\ref{fig:bootstrap_percolation}. For simplicity, we restrict ourselves to the case of a strong power law. We warn the reader that some statements in this section are not literally true, but they are only true if appropriate error margins are taken into account. This holds in particular for the definition of the balls $B_i$ the quantities $\nu_i$, and the weights that will appear in the course of the argument. The precise definitions and exact statements are rather technical and are given in Section~\ref{sec:evolution}; the key statements are  Theorem~\ref{thm:speedlower} on page~\pageref{thm:speedlower} and Theorem~\ref{thm:speedupper} on page~\pageref{thm:speedupper}.

In Section~\ref{sec:preliminaries} we introduce some additional notation, collect some useful tools, and establish a number of basic properties of GIRGs, which will be relevant to the proofs. As already mentioned, Section~\ref{sec:evolution} is dedicated to the analysis of the evolution of the process and thus contains the heart of the proofs in full detail, as well as the proof of Theorem~\ref{thm:containment}. Section~\ref{sec:inftime} contains the derivation of Theorem~\ref{thm:inftime} whereas Section~\ref{sec:proofmain} deduces Theorems~\ref{thm:main} and~\ref{thm:numberofrounds} based on the key results of Section~\ref{sec:evolution}.

\par{\textbf{Bottleneck.}} We first discuss the very beginning of the process, i.e.\ the threshold behaviour as described by Theorem~\ref{thm:main}. For the subcritical regime, we distinguish between high-weight vertices ($\w v = \omega(w_0)$, where $w_0 = \nu^{1/(\beta-1)}$) and low-weight vertices ($\w v =O(w_0)$). By an easy computation, the expected number of low-weight vertices in $B_0$ that are infected in round $1$ is $o(1)$, so by Markov's inequality no low-weight vertex becomes infected whp. On the other hand, whp no high-weight vertex exists in $B_0$, and the expected number of infected vertices outside of $B_0$ is also $o(1)$ because they are too far away from infected vertices. In order words, whp no vertex is infected in round $1$.

In the critical regime, the calculation is similar, but if there exist vertices of weight $\Theta(w_0)$ then these vertices are infected with probability $\Omega(1)$. The number of vertices of weight $\Theta(w_0)$ is Poisson distributed with mean $\Theta(1)$, so it may happen (both with probability $\Omega(1)$) that either no such vertex exists (so percolation stops) or that there are at least $k$ such vertices, and that all of them are infected. In the supercritical regime, whp $k$ vertices of weight (slightly less than) $w_0$ are infected. Whp, these $k$ vertices infect all other vertices of similar weight in at most two more rounds. This is sufficient to start an \emph{avalanche of infection}, and for the rest of this section we will restrict ourselves to the case where this happens. We remark that the only relevant question for this stage is whether at least $k$ vertices of sufficiently high weight are infected. This would also allow to extend some of our results to other initial setting, e.g., if the infection probability $\rho$ is weight-dependent, similar to the weight-dependent transmission costs in~\cite{komjathy2020stopping}. However, we do not pursue these options further in this paper.

\begin{figure}[t]\begin{center}
		\includegraphics[width=0.95\columnwidth]{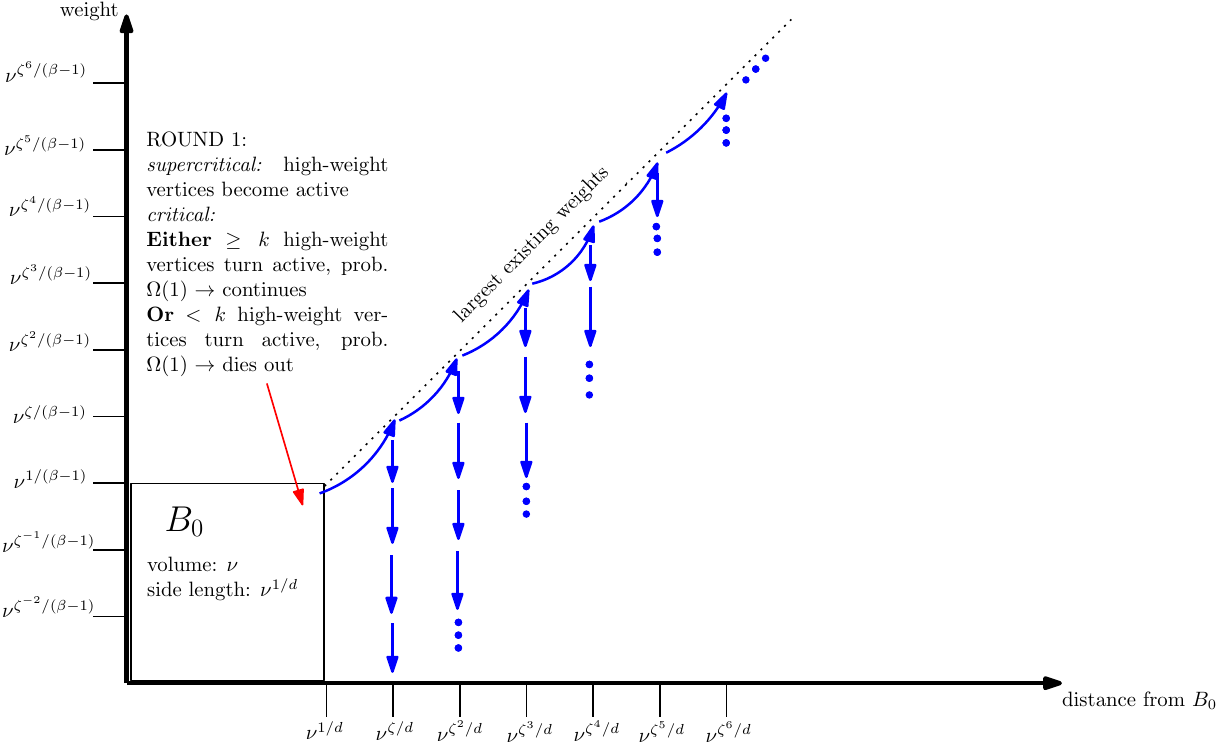}
		\caption{{\small Schematic depiction of the infection process after 7 rounds. In the first round, high-weight vertices in the source region $B_0$ are infected. In the $i$-th round after that, the high-weight vertices of $B_i$ (volume $\nu_i = \nu^{\zeta^i}$, distance $\nu_i^{1/d}$ from $B_0$) are infected by the largest-weight vertices in $B_{i-1}$. In the next round, the high-weight vertices in $B_i$ (weight $\nu_i^{1/(\beta-1)}$) infect high-weight vertices in $B_{i+1}$, but they also infect local vertices of slightly lower weight, namely vertices in $B_i$ of weight $\nu_i^{\zeta^{-1}/(\beta-1)} = \nu^{\zeta^{i-1}/(\beta-1)}$. In the next round, those in turn infect vertices of even lower weight $\nu_i^{\zeta^{-2}/(\beta-1)} = \nu^{\zeta^{i-2}/(\beta-1)}$ in $B_i$, and so on, until vertices of constant weights in $B_i$ are infected. So vertices in $B_i$ of weight $\nu^{\zeta^{i-\ell}}/(\beta-1)$ get infected in round $1+i+\ell$. Note that this description is schematic and simplified, see Theorem~\ref{thm:speedlower} and~\ref{thm:speedupper} for precise statements.}}

		\label{fig:bootstrap_percolation}
\end{center}\end{figure}

\par{\textbf{Growth of the infection region.}} If the infection gets started, then it evolves as follows. Let $\zeta = 1/(\beta-2) >1$ and consider the sequence $B_i$ of nested balls of volume $\nu_i/n$ centered at $0$, where $\nu_i \approx \nu^{\zeta^i}$. Then in the $i$-th round, all vertices of weight $w_i \approx \nu_i^{1/(\beta-1)}$ in $B_i$ are infected. In the next round, whp the vertices of weight $w_i$ in $B_i$ infect all vertices of weight $w_{i+1}$ in $B_{i+1}$, thus spreading the infection to new regions. Note that this statement is easy to prove inductively (cf.~Section~\ref{sec:prooflower}) since we assumed that \emph{all} vertices of weight $w_i$ in $B_i$ are infected, so for the vertices in $B_{i+1}$ it suffices to count the number of neighbours of a certain weight in $B_i$, which is a Poisson distributed random variable (cf.\ Fact~\ref{fact:Poisson}). This gives a lower bound on how fast the infection spreads geometrically. It can not spread faster since whp there are no edges from $B_i$ to $\Space \setminus B_{i+1}$. This latter fact already allows us to execute a quarantine strategy (Theorem~\ref{thm:containment}).

\par{\textbf{Infecting vertices of lower weight.}} If in round $j$ every vertex of weight $w$ in some region has a large probability to be infected, then in round $j+1$ every vertex of weight at least $w' \approx w^{1/\zeta}$ in this region has a large (though slightly smaller) probability to be infected. To prove this formally, we consider a vertex of weight $w'$. Such a vertex (but not vertices of smaller weight) has at least $w^{\delta}$ neighbours of weight $w$, with probability at least $1-\exp[-w^{\delta}]$. So we pick $k$ such neighbours, and bound the probability that at least one of them is \emph{not} infected by a union bound. In this way, we lose a factor of $k$ in each round, but going through the proof details it turns out that this factor is still negligible compared to the error term $\exp[-w^{\delta}]$. The full proof is contained in Section~\ref{sec:prooflower}. 

It is the most challenging and technical part of the proof to complement this infection pathway by a matching upper bound, which we do in Section~\ref{sec:proofupper}. Since in round $i-1$ there is no infected vertex in $B_i$ it is not hard to argue that in round $i$ only vertices of large weight in $\Space \setminus B_{i-1}$ are infected. However, in subsequent rounds it does happen that vertices of very small weight in $\Space \setminus B_{i-1}$ become infected. Fortunately, this only happens with rather small probability, which we can explicitly bound (Theorem~\ref{thm:speedupper} (f)) as a function of the weight. Once we have such a bound in some round, we use that whp no vertex in $\Space \setminus B_{i-1}$ (not too close to the boundary) has strictly more than one neighbour in $B_{i-1}$. Therefore, in order for a vertex $v$ in $\Space \setminus B_{i-1}$ to be infected, at least one of its neighbours in $\Space \setminus B_{i-1}$ must have been infected in the previous round, and we can bound the probability of this event by the expected number of previously infected neighbours in $\Space \setminus B_{i-1}$. It turns out that this simple bound is sufficient to provide the desired matching upper bound, safe quite some technical details for which we refer the reader to Section~\ref{sec:proofupper}. 

We remark that it is in this last step where we use the assumption $\alpha > \beta-1$ in the Theorems~\ref{thm:numberofrounds} and \ref{thm:inftime}, since otherwise there do exist vertices in $\Space\setminus B_{i-1}$ that have several neighbours in $B_{i-1}$, and these vertices exist in a substantial part of $B_{i}$. Even worse, for $\alpha <\beta -1$, in some (large) subregion of $B_i$ the number of infections in round $i+1$ that come from neighbours in $B_{i-1}$ dominates the number of infections that come from neighbours in $B_i$. For investigating the case $\alpha < \beta-1$ (which we don't in this paper), it will no longer be possible to use a bound on the infection probability that is uniform within $\Space\setminus B_{i-1}$, or within $B_i\setminus B_{i-1}$.

\par{\textbf{Infection times.}} Once the claims outlined above are proven (or rather their precise counterparts Theorem~\ref{thm:speedlower} and~\ref{thm:speedupper}) we have almost complete control over the process. In particular, for a each vertex $v$ with fixed weight and position (outside of the source region $B_0$), and for each round $j$ we have lower and upper bounds for the probability that $v$ is infected before round $j$. We can thus compute rounds $j_1,j_2$ for which the probability is at most $o(1)$ and at least $1-o(1)$, respectively, and we find that these rounds coincide up to lower order terms. It is still rather complicated to actually perform the calculations of $j_1$ and $j_2$ due to many technical details which we omitted in this outline, but no further knowledge about the infection process is required. Section~\ref{sec:inftime} contains the full proof.

\subsection{Additional remarks}\label{sec:remarks}
The first case in~\eqref{eq:defofellv} is not needed if we restrict ourselves to vertices as they typically appear in GIRGs. More precisely, as we will see in Lemma~\ref{lem:nolargeweights},~Section~\ref{sec:properties}, whp all vertices in $v=(\x{v},\w{v})\in \VS{}{(\Space\setminus \Binit)}{}{}$ satisfy $\w v \leq ((2\|\x v\|)^dn)^{1/(\beta-1-\lambda)}$ where $\lambda >0$ is an arbitrary constant. 
In the border case $((2\|\x v\|)^dn)^{1/(\beta-1)} \leq \w v \leq ((2\|\x v\|)^dn)^{1/(\beta-1-\lambda)}$ both expressions in~\eqref{eq:defofellv} agree up to additive constants, i.e.\
\begin{equation}\label{eq:defofellvsimplified}
\actExp{\x{v},\w{v}} = \frac{2\log\log_{\Nuinit} ((2\|\x v\|)^dn)-\log\log_{\Nuinit} \w{v}}{|\log(\beta-2)|} \pm O(1).
\end{equation}
Therefore, if we were to change Theorem~\ref{thm:inftime} so that it excludes vertices which are unlikely to exist, we could also use~\eqref{eq:defofellvsimplified} to define  $\actExp{}$.

The reason for the phase transition at $\w v \geq ((2\|\x v\|)^dn)^{1/(\beta-1)}$ is the very last step. A vertex with such a high weight has neighbours closer to the source region $B_0$ than all other vertices at distance $\|\x v\|$. For this reason it may turn active much earlier than any other vertex in its vicinity. Within this regime, a second phase transition occurs when $\w{v} = \|\x{}\|^dn/\nu^{\Theta(1)}$. At this point, $v$ starts to have direct neighbours inside of the source region $B_0$, and in particular it is connected to the vertices of largest weight in $B_0$, so $v$ becomes active after just two rounds. In~\eqref{eq:defofellv} this phase transition marks the point when the maximum in the first case is attained by zero.

Next we observe that the technical restrictions in Theorem~\ref{thm:inftime} are necessary: if a vertex has weight $\w v = O(1)$ then the number of neighbours is Poisson distributed with mean $\Theta(\w v)$  (see Lemma~\ref{lem:marginalProb} and Fact~\ref{fact:Poisson}), so $v$ is even isolated with probability $\Omega(1)$. In particular, we cannot expect that whp $v$ is ever infected. 

On the other hand, the restriction $\actExp{\x{v},\w{v}} \leq \tfrac{1}{d}\log_2\left((2\|\x{}\|)^dn\Nuinit^{-(\beta-1)/(\beta-2)-\eps}\right)$ ensures that $v$ is not too close to the source region. If $v$ is too close, then it may have neighbours inside of $\Binit$, and in this case it does depend on $\rho$ when they are infected. (And of course, this process iterates.) The term $\tfrac{1}{d}\log_2\left((2\|\x{}\|)^dn\Nuinit^{-(\beta-1)/(\beta-2)-\eps}\right)$ is not tight and could be improved at the cost of more technical proofs. However, there are already rather few vertices that violate the condition $\actExp{\x{v},\w{v}} \leq \tfrac{1}{d}\log_2\left((2\|\x{}\|)^dn\Nuinit^{-(\beta-1)/(\beta-2)-\eps}\right)$. For example, recall that it only takes $O(\log \log n)$ steps until a constant fraction of all vertices are infected. At this time, we only exclude vertices which satisfy $(2\|\x v\|)^dn \leq \Nuinit^{(\beta-1)/(\beta-2)+\eps} \cdot (\log n)^{O(1)}$, so the expected number of affected vertices is also at most $\Nuinit^{(\beta-1)/(\beta-2)+\eps} \cdot (\log n)^{O(1)}$, which is negligible if we assume $\Nuinit^C\le  n$ for some constant $C>\frac{\beta-1}{\beta-2}+\eps$. Even this is a gross overestimate, since the vertices close to the origin have much smaller infection times $\act{v}$, and thus only very few of them are affected by the condition.

\section{Preliminaries}\label{sec:preliminaries}
We often consider subsets of the vertex sets which satisfy some restrictions on their weights, positions, or whether they are infected at a given point of time. We use the following (slightly abusive) notation throughout the paper: For a weight $w\in \RR_{>0}$, a measurable set $B\subseteq \Space$, and a time $i\ge 0$ we set 
\[
\VS{\ge w}{B}{\le i}{}:=\left\{u=(\x{u},\w{u})\in V\cond \w{u}\ge w, \x{u}\in B, u\in \VS{}{}{\le i}{}\right\}.
\]
All three types of restrictions are optional. Moreover, we use the superscript ``$(=i)$'' to describe vertices which become infected (precisely) in round $i$, i.e. $\VS{}{}{=i}{}:=\VS{}{}{\le i}{}\setminus \VS{}{}{\le i-1}{}$ and $\VS{}{}{=0}{}:=\VS{}{}{\le 0}{}$ etc. Furthermore, the index ``$\ge w$'' may be replaced by ``$<w$'' or ``$\in [w,w')$'', with the obvious meaning. Additionally, we denote the neighbourhood of a vertex $v\in V$ by $\VS{}{}{}{v}:=\left\{u\in V\cond \{u,v\}\in E\right\}$ and this notation may be modified by the same three types of restrictions, i.e.\
\[
\VS{\ge w}{B}{\le i}{v}:= N(v) \cap (\VS{\ge w}{B}{\le i}{}).
\]

In~\cite{bringmann2018geometric}, GIRGs were defined with a fixed number of vertices, while we assume the set of vertices to be given by a homogeneous Poisson point process. One can retain the model in~\cite{bringmann2018geometric} by conditioning on the number of vertices being exactly $n$. Our choice allows for less technical proofs. In particular, one of the benefits of the Poisson point process is the following elementary fact.
\begin{fact} \label{fact:Poisson}
Let $\lambda\in\RR_{\ge0}$ and let $X$ be a Poisson distributed random variable with mean $\lambda$. Furthermore, given some $0\le q\le 1$, let $Y$ be a random variable which conditioned on $\{X=x\}$, for any $x\in \NN_0 = \{0,1,2,\ldots\}$, is the sum of $x$ independent Bernoulli random variables with mean $q$. Then $Y$ is Poisson distributed with mean $q\lambda$. 
\end{fact}
This means for instance that $|\VS{\geq w}{B}{}{v}|$ is Poisson distributed with mean $nq$, where $q$ denotes the probability that a vertex $u$ with random position $\x{u}$ and random weight $\w{u}$ satisfies $\w{u}\ge w$, $\x{u}\in B$, and is a neighbour of $v$.

We remark that the Poisson point process can be realised by a two-stage process: in the first stage, the \emph{number} of vertices is drawn from a Poisson distribution, and an ordering of the vertices is fixed. Then each vertex chooses independently a position in $\Space$ uniformly at random. Moreover, positions and weights of the vertices are independent, and we can draw them in any order. 

\subsection{Tools}
Many relevant quantities can be expressed by summing (some function) over all vertices whose weights lie in a given interval, the following lemma provides an easy way of evaluating these.
\begin{lem}\label{lem:integral}
Let $0 \leq w_0 < w_1$, and let $f:\RR_{\geq 0}\to\RR_{\geq 0}$ be a piecewise continuously differentiable function. Then for any 
finite set $V$ of weighted vertices, we have
\[
\sum_{v \in V, w_0 \leq \w{v} < w_1} f(\w{v}) \;=\; f(w_0) |V_{\geq w_0}| \;-\; f(w_1) |V_{\geq w_1}| \;+\; \int_{w_0}^{w_1} |V_{\geq w}| \frac{d}{dw}f(w) dw.
\]
In particular, if $f(0)=0$, $w_0=0$ and $w_1 > \max\{\w v \mid v\in V\}$, then we have 
\[
\sum_{v \in V} f(\w{v}) = \int_{0}^{w_1} |V_{\geq w}| \frac{d}{dw}f(w) dw=\int_{0}^{\infty} |V_{\geq w}| \frac{d}{dw}f(w) dw.
\]
\end{lem}
\begin{proof}
We will prove the lemma for a function $f$ which is everywhere continuously differentiable. The statement for \emph{piecewise} continuously differentiable functions then follows by applying this case to intervals $[w_i,w_{i+1}]$ on which $f$ is continuously differentiable, and summing over all these pieces. So assume that $f$ is everywhere continuously differentiable. Let $w_0 \leq t_1 < t_2 < \ldots <t_k < w_1$ be the distinct vertex weights $\w v$, where $v\in V$ with $w_0 \leq \w v \leq w_1$, sorted in non-decreasing order. Observe that
\begin{align*}
\sum_{v \in V, w_0 \leq \w{v} < w_1} f(\w{v}) &\;=\; \sum_{i=1}^n f(t_i)\left(\left|V_{\geq t_i}\right|-\left|V_{> t_i}\right|\right) \\
&\;=\; f(t_1)\left|V_{\geq t_1}\right| - f(t_n)\left|V_{\geq w_1}\right| + \sum_{i=2}^n f(t_i)\left|V_{\geq t_i}\right| - \sum_{i=2}^n f(t_{i-1})\left|V_{\geq t_i}\right| \\
&\;=\; f(t_1)\left|V_{\geq t_1}\right| - f(t_n)\left|V_{\geq w_1}\right| + \sum_{i=2}^n \left(f(t_i)-f(t_{i-1})\right)\left|V_{\geq t_i}\right|.
\end{align*}
For $2\leq i \leq k$, since $\left|V_{\geq x}\right| = \left|V_{\geq t_i}\right|$ for all $t_{i-1} < x \leq t_i$,
\begin{align*}
\int_{t_{i-1}}^{t_i}f'(x)\left|V_{\geq x}\right|dx = \left(f(t_i)-f(t_{i-1})\right)\left|V_{\geq t_i}\right|.
\end{align*}
Since $\left|V_{\geq x}\right| = \left|V_{\geq t_1}\right| = \left|V_{\geq w_0}\right|$ for all $w_0 < x \leq t_1$,
\begin{align*}
\int_{w_0}^{t_1}f'(x)\left|V_{\geq x}\right|dx = \left(f(t_1)-f(w_0)\right)\left|V_{\geq t_1}\right| = f(t_1)\left|V_{\geq t_1}\right| - f(w_0)\left|V_{\geq w_0}\right|.
\end{align*}
Similarly, $\int_{t_k}^{w_1}f'(x)\left|V_{\geq x}\right|dx = (f(w_1)-f(t_n))\left|V_{\geq w_1}\right|$. The claim follows by summing up the integrals.
\end{proof}

The next lemma spells out an almost trivial calculation, but since it is ubiquitous in our proofs, we state it as a lemma nevertheless. In our applications, $g(w)$ is typically the number of vertices of weight at least $w$ (possibly with additional restrictions), and $f$ is the probability that such a vertex has a certain property (e.g., that it forms an edge with some fixed $v$). After application of Lemma~\ref{lem:integral}, this almost always leads to an integral as given in~\eqref{eq:evalintegral} below.
\begin{lem}\label{lem:evalintegral}
Let $g:\RR_{\geq 0}\to\RR_{\geq 0}$ be a non-zero monomial, and let $f:\RR_{\geq 0}\to\RR_{\geq 0}$ be continuous and piecewise a non-zero monomial with non-negative exponent.\footnote{I.e., $g(w) = Cw^{r}$ for some $C>0$ and $r\in \RR$, and there exists a partitioning of $\RR_{\geq 0}$ into a finite number of intervals $I_1,\ldots,I_k$ such that for each $1\leq j \leq k$, the function $f$ restricted to $I_j$ is of the form $f(w) = C_jw^{s_j}$, for some $C_j >0$, $s_j \in \RR_{\geq 0}$. Moreover, $f$ is continuous.} Moreover, assume that there is $\tilde w$ such that the exponent of $w$ in $f(w)g(w)$ is strictly larger than $0$ for $w < \tilde w$, and strictly smaller than $0$ for $w > \tilde w$. Then for all $w_0,w_1$ such that $w_0 \leq \tilde w \leq w_1$, we have
\begin{equation}\label{eq:evalintegral}
\int_{w_0}^{w_1} g(w)\frac{d}{dw}f(w) dw = O(f(\tilde w)g(\tilde w)).
\end{equation}
Moreover, assume that \emph{(i)} the exponent of $f$ is non-zero in an interval $[(1-\Omega(1))\tilde w,\tilde w] \subseteq [w_0,\tilde w]$, \emph{or} \emph{(ii)} the exponent of $f$ is non-zero in an interval $[\tilde w, (1+\Omega(1))\tilde w] \subseteq [\tilde w, w_1]$. Then the $O(\cdot)$ in~\eqref{eq:evalintegral} may be replaced by $\Theta(\cdot)$.
\end{lem}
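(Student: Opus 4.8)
\emph{Proof proposal.}
The plan is to reduce the whole computation to a telescoping sum over the monomial pieces of $f$. Write $g(w)=c_g w^{a}$ with $c_g>0$, and partition $[w_0,w_1]$ into the finitely many intervals $[u_{j-1},u_j]$, $j=1,\dots,m$, on which $f$ is a single monomial, inserting $\tilde w$ as an additional breakpoint so that each piece lies entirely in $[w_0,\tilde w]$ or entirely in $[\tilde w,w_1]$; on piece $j$ write $f(w)=c_j w^{b_j}$ with $c_j>0$ and $b_j\ge 0$. The hypothesis on the exponent of $fg$ then says $a+b_j>0$ whenever $[u_{j-1},u_j]\subseteq[w_0,\tilde w]$ and $a+b_j<0$ whenever $[u_{j-1},u_j]\subseteq[\tilde w,w_1]$; in particular $a+b_j\ne 0$ on every piece. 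The key identity is that on piece $j$,
\[
g(w)f'(w)=c_g c_j\,b_j\,w^{a+b_j-1}=\frac{b_j}{a+b_j}\,\frac{d}{dw}\bigl(c_g c_j w^{a+b_j}\bigr)=\frac{b_j}{a+b_j}\,h'(w),\qquad h(w):=g(w)f(w),
\]
so that, using continuity of $h$, $\int_{u_{j-1}}^{u_j} g f'\,dw=\frac{b_j}{a+b_j}\bigl(h(u_j)-h(u_{j-1})\bigr)$.

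For the upper bound, note that since $b_j\ge 0$ the function $f$ is non-decreasing, and $h$ is increasing on each piece left of $\tilde w$ (exponent $a+b_j>0$) and decreasing on each piece right of $\tilde w$ (exponent $a+b_j<0$); being continuous, $h$ is therefore unimodal with maximum $h(\tilde w)=f(\tilde w)g(\tilde w)$. On pieces left of $\tilde w$ the coefficient $b_j/(a+b_j)$ is $\ge 0$ and $h(u_j)\ge h(u_{j-1})$, while on pieces right of $\tilde w$ the coefficient is $\le 0$ and $h(u_j)\le h(u_{j-1})$; hence every summand is $\ge 0$, so in particular $\int_{w_0}^{w_1} gf'\ge 0$, and
\[
\int_{w_0}^{w_1} g f'\,dw\;\le\; M\!\!\sum_{[u_{j-1},u_j]\subseteq[w_0,\tilde w]}\!\!\bigl(h(u_j)-h(u_{j-1})\bigr)\;+\;M\!\!\sum_{[u_{j-1},u_j]\subseteq[\tilde w,w_1]}\!\!\bigl(h(u_{j-1})-h(u_j)\bigr)\;\le\;2M\,h(\tilde w),
\]
where $M:=\max_j|b_j/(a+b_j)|<\infty$ (finitely many pieces, $a+b_j\ne 0$ on each), and the two inner sums telescope to $h(\tilde w)-h(w_0)$ and $h(\tilde w)-h(w_1)$. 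This is the claimed $O(f(\tilde w)g(\tilde w))$.

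For the $\Theta$ refinement it suffices, since all summands are $\ge 0$, to bound one summand from below by $\Omega(f(\tilde w)g(\tilde w))$. Under (i) there is a constant $c_0>0$ such that $[(1-c_0)\tilde w,\tilde w]$ is covered by pieces of $f$ all having positive exponent; one of them, say $[u_{k-1},u_k]$, then has length $\ge c_0\tilde w/m=\Omega(\tilde w)$, lies left of $\tilde w$, and has exponent $b_k>0$ with $a+b_k>0$. On it $\int gf'=\frac{b_k}{a+b_k}c_g c_k(u_k^{a+b_k}-u_{k-1}^{a+b_k})$, and since $u_k\in[(1-c_0)\tilde w,\tilde w]$ and $u_{k-1}\le(1-\Omega(1))u_k$ this is $\Omega(c_g c_k u_k^{a+b_k})=\Omega(c_g c_k\tilde w^{a+b_k})$; finally $c_g c_k\tilde w^{a+b_k}=\Theta(f(\tilde w)g(\tilde w))$, because $g(\tilde w)=c_g\tilde w^a$ exactly and $c_k\tilde w^{b_k}=\Theta(f(u_k))=\Theta(f(\tilde w))$ as $f$ is continuous, non-decreasing, and piecewise monomial with bounded exponents over the range $[u_k,\tilde w]$ of multiplicative width $\Theta(1)$. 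Case (ii) is symmetric, using a long piece just to the right of $\tilde w$, where $a+b_j<0$ makes $u_{j-1}^{a+b_j}-u_j^{a+b_j}=\Omega(u_{j-1}^{a+b_j})$.

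All computations are elementary; the only mildly delicate point is the last one — extracting from (i)/(ii) a single monomial piece of $f$ adjacent to $\tilde w$ that simultaneously has multiplicative length $\Omega(1)$ and a strictly positive exponent, and then invoking continuity of $f$ to replace the value of that local monomial at $\tilde w$ by $f(\tilde w)$ up to constant factors.
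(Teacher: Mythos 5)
Your proof is correct, and it takes essentially the same route as the paper: integrate $gf'$ piece by piece of $f$ and exploit that $h=fg$ is unimodal with its maximum at $\tilde w$. The one genuine stylistic improvement is that you express $\int_{u_{j-1}}^{u_j} gf'\,dw=\tfrac{b_j}{a+b_j}\bigl(h(u_j)-h(u_{j-1})\bigr)$ exactly and observe that every summand is nonnegative, so the whole sum telescopes cleanly to at most $2M\,h(\tilde w)$; the paper instead treats a two-piece $f$ by hand and then, for many pieces, sums one $O(f(w_0^{(i)})g(w_0^{(i)}))$ bound per breakpoint and absorbs the resulting factor $\lambda$ into the constant. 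Your $\Theta$-argument is also marginally more robust: rather than assuming the piece adjacent to $\tilde w$ already has multiplicative width $\Omega(1)$ (which the paper does implicitly when it ``restricts to the two pieces bounded by $\tilde w$''), you pigeonhole a sub-piece of multiplicative width $1+\Omega(1)$ inside $[(1-\Omega(1))\tilde w,\tilde w]$ and then use continuity and bounded exponents to transfer from $f(u_k)$ to $f(\tilde w)$, which covers the (admittedly unused) possibility of many tiny pieces near $\tilde w$. Both proofs share the same caveat that the hidden constants depend on the exponents of $f$ and $g$ (via $M=\max_j|b_j/(a+b_j)|$ in yours, via $1/(r+s_0)$, $1/|r+s_1|$, $\lambda$ in the paper's), which is fine for the intended applications.
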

\begin{proof}
Let $g(w)= Cw^r$. Let us first assume that, by continuity, $f$ satisfies $f(w) = C_0w^{s_0}$ for $w \le \tilde w$ and $f(w) = C_1w^{s_1}$ for $w \ge \tilde w$, i.e.\ that $f$ consists of only two pieces. Then by assumption $r+s_0 > 0 > r+s_1 $. We first consider the lower part of the integral. If $s_0 =0$ then $(df/dw)(w) =0$ for $w\leq \tilde w$, and the integral from $w_0$ to $\tilde w$ vanishes. So assume that $s_0>0$. Then $(df/dw)(w) = C_0s_0 w^{s_0-1}$, and the antiderivative of $g\cdot (df/dw)$ is $CC_0s_0/(r+s_0) w^{r+s_0}$. Since $r+s_0 > 0$, this function is increasing in $w$, and
\begin{equation}\label{eq:evalintegral2}
\int_{w_0}^{\tilde w} g(w)\frac{d}{dw}f(w) dw = \frac{CC_0s_0}{r+s_0}\left(\tilde w^{r+s_0} -w_0^{r+s_0}\right) = \Theta(f(\tilde w)g(\tilde w) -f(w_0)g(w_0)).
\end{equation}
Note that if $w_0 \leq (1-\Omega(1))\tilde w$, then $f(\tilde w)g(\tilde w) -f(w_0)g(w_0) = \Omega(f(\tilde w)g(\tilde w))$ since $fg$ is a polynomial with positive exponent $r+s_0$ in $[w_0,\tilde w]$, which proves the additional statement~(i).

For the upper part of the integral, we may assume $s_1 >0$, since otherwise this part of the integral vanishes. Then $(df/dw)(w) = C_1s_1 w^{s_1-1}$, and the antiderivative of $g\cdot (df/dw)$ is $CC_1s_1/(r+s_1) w^{r+s_1}$. Note crucially that the sign of this function is negative since $r+s_1 <0$. Hence,
\begin{equation}\label{eq:evalintegral3}
\int_{\tilde w}^{w_1} g(w)\frac{d}{dw}f(w) dw = \frac{CC_1s_1}{-(r+s_1)}\left(\tilde w^{r+s_1}-w_1^{r+s_1}\right) = \Theta(f(\tilde w)g(\tilde w) -f(w_1)g(w_1)).
\end{equation}
Similarly to the first part, if $w_1 \geq (1+\Omega(1))\tilde w$, then $f(\tilde w)g(\tilde w) -f(w_1)g(w_1) = \Omega(f(\tilde w)g(\tilde w))$, which proves the additional statement (ii). On the other hand, Equation~\eqref{eq:evalintegral} follows immediately from~\eqref{eq:evalintegral2} and~\eqref{eq:evalintegral3} by leaving out the negative terms. This proves the lemma in the case that $f$ consists of only two pieces.

For the case of several pieces, the additional statement follows by restricting the integral to the two pieces bounded by $\tilde w$. For the upper piece, assume that $w_0 = w_0^{(1)} < \ldots < w_0^{(\lambda)} = \tilde w$ are the endpoints of the different pieces below $\tilde w$. Then in the same way as~\eqref{eq:evalintegral2}, we get
\begin{equation*}
\int_{w_0}^{\tilde w} g(w)\frac{d}{dw}f(w) dw = O(\sum_{i=1}^{\lambda} f(w_0^{(i)})g(w_0^{(i)})) = O(\lambda f(\tilde w)g(\tilde w)) = O(f(\tilde w)g(\tilde w)),
\end{equation*}
since $fg$ is an increasing function in $[w_0,\tilde w]$. The part $[\tilde w, w_1]$ follows analogously. 
\end{proof}

\subsection{Basic properties of GIRGs}\label{sec:properties}

In this section we list briefly some basic properties of GIRGs. We start with a fact which often allows us to treat the case $\alpha=\infty$ along with the case of finite $\alpha$ without case distinction.
\begin{observation}\label{obser:infinitealpha}
For every function $p$ satisfying~\eqref{eq:edgeprob2} and every $\alpha \in \RR_{>1}$, there is a function $\bar p$ satisfying~\eqref{eq:edgeprob1} such that $\bar p(x_1,x_2,w_1,w_2) \geq p(x_1,x_2,w_1,w_2)$ for all $x_1,x_2 \in \Space$ and all $w_1,w_2 \geq \wmin$.
\end{observation}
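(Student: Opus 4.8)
The plan is to simply rescale the threshold edge-probability upward into a power-law shape. Abbreviate $t := \w{1}\w{2}/(\|\x{1}-\x{2}\|^d n)$ for the ratio appearing in~\eqref{eq:edgeprob1}, so that a function of $(\x{1},\x{2},\w{1},\w{2})$ satisfies~\eqref{eq:edgeprob1} exactly when it is $\Theta(1)\min\{t^\alpha,1\}$. I would define
\[
\bar p(\x{1},\x{2},\w{1},\w{2}) := \min\left\{C\, t^\alpha,\ 1\right\}, \qquad C := \max\{1,C_2\}^{\alpha d},
\]
where $C_2$ is the constant from~\eqref{eq:edgeprob2}; since $C_2$, $\alpha$ and $d$ are fixed model parameters, $C$ is an absolute constant.

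The first step is to confirm that $\bar p$ itself satisfies~\eqref{eq:edgeprob1}, i.e.\ that $\min\{Ct^\alpha,1\}=\Theta(\min\{t^\alpha,1\})$ uniformly in the arguments. This is elementary: as $C\geq 1$, for $t^\alpha\geq 1$ both quantities equal $1$; for $1/C\leq t^\alpha<1$ the left-hand side is $1$ while the right-hand side lies in $[1/C,1)$; and for $t^\alpha<1/C$ the ratio of the two is exactly $C$. In every case the ratio $\bar p/\min\{t^\alpha,1\}$ lies in $[1,C]$, so $\bar p=\Theta(1)\min\{t^\alpha,1\}$ with hidden constants $1$ and $C$.

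The second step is to check $\bar p\geq p$ pointwise for all $\x{1},\x{2}\in\Space$ and all $\w{1},\w{2}\geq\wmin$. If $\|\x{1}-\x{2}\|>C_2(\w{1}\w{2}/n)^{1/d}$, then $p=0\leq\bar p$ by the second line of~\eqref{eq:edgeprob2}. Otherwise $\|\x{1}-\x{2}\|^d\leq C_2^d\,\w{1}\w{2}/n$, which rearranges to $t\geq C_2^{-d}$, hence $t^\alpha\geq C_2^{-\alpha d}\geq \max\{1,C_2\}^{-\alpha d}=1/C$, so $Ct^\alpha\geq 1$ and therefore $\bar p=1\geq p$, since $p$ is a probability. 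Combining the two cases gives $\bar p\geq p$ everywhere, which is the claim.

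I do not expect any real obstacle here — this is an observation, and the construction is explicit. The only points that need a little care are making sure the rescaling constant $C$ is genuinely absolute (which it is, being built from the model parameters $C_2,\alpha,d$ only) and the slightly fiddly but routine verification that the rescaled minimum is still within a constant factor of the standard one. Note that neither the constant $C_1$ nor the $\Omega(1)$ lower bound in~\eqref{eq:edgeprob2} plays any role; all that is used from~\eqref{eq:edgeprob2} is that $p$ vanishes beyond distance $C_2(\w{1}\w{2}/n)^{1/d}$ and that $p\leq 1$.
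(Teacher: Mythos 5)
Your proof is correct, and since the paper states this as an unproved observation, there is no argument in the paper to compare against. Your construction $\bar p = \min\{Ct^\alpha,1\}$ with $C = \max\{1,C_2\}^{\alpha d}$ is the natural one: the verification that $\bar p/\min\{t^\alpha,1\}\in[1,C]$ is routine, and the domination $\bar p\geq p$ follows because $p$ vanishes beyond the $C_2$-threshold while inside that threshold $t\geq C_2^{-d}$ forces $Ct^\alpha\geq 1$, so $\bar p=1$.
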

In other words, GIRGs in the threshold case $\alpha = \infty$ are dominated by GIRGs with finite $\alpha$. In particular, whenever we prove an upper bound on the number of active vertices that holds for all GIRGs with finite $\alpha$, the same upper bound also holds for threshold GIRGs.

The next lemma, taken from~\cite{bringmann2015generalGIRG}, tells us that the expected degree of a vertex equals its weight, up to constant factors. Moreover, it gives the marginal probability that two vertices $u,v$ of fixed weights but random positions in $\Space$ are adjacent. This probability remains the same if the position of one (but not both) of the vertices is fixed. 
\begin{lem}[Lemma~4.4 and Theorem~7.3 in~\cite{bringmann2015generalGIRG}]\label{lem:marginalProb}
Let $v=(\x{v},\w{v})$ be a vertex with fixed weight and position. Then
\begin{equation}\label{eq:marginal1}
\EE[\deg(v)] = \Theta(\w v).
\end{equation}
Moreover, if $u=(\x{u},\w{u})$ is a vertex with fixed weight, but with random position $\x{u}\in \Space$.\footnote{This means that we condition the Poisson point process on having at least one point, and for the first point in the ordered vertex set we condition on its weight, but not on its position, cf. the discussion at the beginning of Section~\ref{sec:preliminaries}.} Then 
\begin{equation}\label{eq:marginal}
\Pr\left[\{u,v\}\in E \mid \w u, \w v, \x v\right]=\Theta\left(\min\left\{\frac{\w{u} \w{v}}{n},1\right\}\right).
\end{equation}
Note in particular that the right hand side of~\eqref{eq:marginal} is independent of $\x v$, so the same formula still applies if also the position $\x v$ of $v$ is randomized.
\end{lem}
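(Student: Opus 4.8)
The plan is to derive both statements from the definition of the edge probability~\eqref{eq:edgeprob1}–\eqref{eq:edgeprob2} by integrating over the position of a vertex with random location, using the Poisson point process description of $V$. By Observation~\ref{obser:infinitealpha} it suffices to treat the case $\alpha<\infty$: the threshold model is dominated from above by a finite-$\alpha$ GIRG, and the lower bound in~\eqref{eq:edgeprob2} gives a matching $\Omega(1)$ edge probability on a ball of comparable volume, so the same integral computation applies with the two constants $C_1,C_2$ playing the role of the $\Theta(1)$ prefactor. I will therefore write $p(x_u,x_v,w_u,w_v) = \Theta(1)\min\{(w_uw_v/(\|x_u-x_v\|^dn))^\alpha,1\}$ throughout.

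For~\eqref{eq:marginal}, fix $v=(x_v,w_v)$ and let $x_u$ be uniform on $\Space=[0,1]^d$; by translation invariance of the torus we may assume $x_v=0$, and then $\|x_u-x_v\|=\|x_u\|$ has the property that $\Vol(\{x:\|x\|\le r\})=\Theta(\min\{r^d,1\})$ (the constant depending only on $d$). Writing $t=w_uw_v/n$, the quantity $(w_uw_v/(\|x_u\|^dn))^\alpha$ exceeds $1$ exactly when $\|x_u\|^d \le t$, i.e.\ on a ball of volume $\Theta(\min\{t,1\})$; on the complement the edge probability decays like $(t/\|x_u\|^d)^\alpha$. Hence
\[
\Pr[\{u,v\}\in E \mid w_u,w_v] = \Theta\!\left( \min\{t,1\} + \int_{\{\|x\|^d > t\}\cap\Space} \left(\frac{t}{\|x\|^d}\right)^{\!\alpha} dx \right).
\]
Substituting $r=\|x\|$ and using $\Vol$ of a sphere shell, the integral is $\Theta\big(\int_{t^{1/d}}^{\Theta(1)} (t/r^d)^\alpha r^{d-1}\,dr\big)$, and since $\alpha>1$ the exponent $d-1-d\alpha$ is strictly less than $-1$, so the integral is dominated by its lower endpoint and equals $\Theta(t^\alpha \cdot (t^{1/d})^{d-d\alpha}) = \Theta(t)$, provided $t\le\Theta(1)$; if $t=\Omega(1)$ the first term already contributes $\Theta(1)$ and so does the whole expression. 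Combining the two terms gives $\Theta(\min\{t,1\})=\Theta(\min\{w_uw_v/n,1\})$, which is~\eqref{eq:marginal}; since the computation never used the value of $x_v$, it also holds when $x_v$ is randomized.

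For~\eqref{eq:marginal1}, apply the same marginal probability: for fixed $v$, the number of neighbours of $v$ among the Poisson point process (after drawing each vertex's weight independently from $\pld$) has expectation $n\cdot \EE_{w_u\sim\pld}\big[\Theta(\min\{w_uw_v/n,1\})\big]$, using the Poisson-thinning Fact stated after Observation~\ref{obser:infinitealpha}. The upper bound $\EE[\min\{w_uw_v/n,1\}]\le w_v\EE[w_u]/n = O(w_v/n)$ is immediate since $\pld$ has constant-order mean (it is bounded below by $\wmin$ and its tail~\eqref{eq:powerlaw} with any $\gamma<\beta-2$ gives a finite first moment), so $\EE[\deg(v)]=O(w_v)$. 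For the lower bound, restrict to weights $w_u\in[\wmin,2\wmin]$, which has $\pld$-probability $\Omega(1)$; for such $w_u$, $\min\{w_uw_v/n,1\}=\Omega(w_v/n)$ as long as $w_v\le n/(2\wmin)$ (and one checks separately that $w_v>n/(2\wmin)$ forces $\EE[\deg(v)]=\Theta(n)=\Theta(w_v)$ trivially), giving $\EE[\deg(v)]=\Omega(w_v)$.

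The main obstacle is purely bookkeeping: making sure the integral $\int (t/r^d)^\alpha r^{d-1}\,dr$ is handled on the correct domain (the torus has diameter $\Theta(1)$, so $r$ ranges only up to a constant, and one must check the upper endpoint never dominates — this is where $\alpha>1$, equivalently $d-1-d\alpha<-1$, is essential), and keeping track of the case split $t\lessgtr 1$ so that the $\min\{\cdot,1\}$ truncation is reproduced correctly on both sides. Everything else — translation invariance on $\Space$, Poisson thinning, the constant-order first moment of $\pld$ — is standard and already available from the cited results and the model definition. Since the statement is quoted verbatim from~\cite{bringmann2015generalGIRG}, I would in fact simply cite it; the above is the self-contained argument one would give if a proof were required.
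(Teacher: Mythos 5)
Since the paper imports this lemma verbatim from \cite{bringmann2015generalGIRG} and gives no proof of its own, you are right that the correct thing to do is simply cite it; your self-contained derivation is a bonus rather than a comparison against anything in this paper. That derivation is essentially right: the radial integral $\int_{t^{1/d}}^{\Theta(1)}(t/r^d)^\alpha r^{d-1}\,dr$ with $t=\w u\w v/n$ is indeed dominated by its lower endpoint precisely because $\alpha>1$ (so $d-1-d\alpha<-1$), giving $\Theta(t)$ for the tail, and the near-ball gives $\Theta(\min\{t,1\})$, which combine to the claimed marginal; the threshold case $\alpha=\infty$ does follow by sandwiching between the $\Omega(1)$ probability on the $C_1$-ball and the finite-$\alpha$ domination from Observation~\ref{obser:infinitealpha}.

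Two small points on the $\EE[\deg(v)]=\Theta(\w v)$ part. First, the step ``restrict to $w_u\in[\wmin,2\wmin]$, which has $\pld$-probability $\Omega(1)$'' does not follow from the weak power-law condition~\eqref{eq:powerlaw}: that condition only controls the tail $\Pr[D\ge w]$ for $w\ge\wmin$, and a legitimate $\pld$ could place no mass at all in $[\wmin,2\wmin]$ (e.g.\ if $c_2$ is large enough that the upper tail bound is vacuous up to a large multiple of $\wmin$). The clean fix is to drop the restriction entirely and use $\min\{\w u\w v/n,1\}\ge\min\{\wmin\w v/n,1\}$ for every $\w u$ in the support, which directly yields $\EE[\deg(v)]=\Omega(\min\{\w v,n\})$ without needing any localization of mass; alternatively, choose a constant $W$ with $\Pr[D\ge W]<1/2$ (which exists since the tail bound forces $\Pr[D\ge w]\to 0$) and restrict to $[\wmin,W)$. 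Second, your parenthetical that $\w v>n/(2\wmin)$ ``forces $\EE[\deg(v)]=\Theta(n)=\Theta(\w v)$'' is only true when $\w v=O(n)$; for $\w v\gg n$ the expected degree is capped at $\Theta(n)$, so the lemma as stated carries the implicit (and in GIRG applications innocuous, since the maximum weight is whp $n^{1/(\beta-1)+o(1)}$) restriction $\w v=O(n)$. Neither issue affects the use of the lemma in this paper, but they are worth stating if the derivation were to be written out in full.
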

An expert reader may recognise that it is the same marginal probability as in Chung-Lu random graphs, cf.~\cite{bringmann2015generalGIRG} for a discussion in depth. 

Next we bound the expected number of neighbours with large weight of a fixed vertex. 
\begin{lem}\label{lem:weightofneighbours}
Let $\eta>0$ be a constant and consider a vertex $v=(\x{v},\w{v})$ with fixed weight and position. Then for every $w\ge\wmin$ we have 
\begin{enumerate}[\qquad(a)]
\item $\EE[|\VS{\ge w}{}{}{v}|] = O(\min\{\w v w^{2-\beta+\eta},nw^{1-\beta+\eta}\})$. \\
 In particular, for a random vertex $u$ we have, independently of $\x v$ and $\w v$,
 \[
 \Pr[\w u \geq w \mid \{u,v\}\in E] = O(w^{2-\beta+\eta});
 \]
\item $\EE[|\VS{\ge w}{}{}{v}|] = \Omega(\min\{\w v w^{2-\beta-\eta},nw^{1-\beta-\eta}\})$.
\end{enumerate}
\end{lem}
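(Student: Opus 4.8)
The plan is to compute $\EE[|\VS{\ge w}{}{}{v}|]$ directly from the Poisson structure together with the marginal edge probability from Lemma~\ref{lem:marginalProb}, and then to integrate against the weight distribution using Lemma~\ref{lem:integral} and Lemma~\ref{lem:evalintegral}. First I would observe that, by the Poisson property of the vertex set and the thinning fact stated after Observation~\ref{obser:infinitealpha}, $|\VS{\ge w}{}{}{v}|$ is Poisson with mean $n \cdot \Pr[\w u \geq w,\ \{u,v\}\in E \mid \w v, \x v]$, where $u$ has random position and random $\pld$-distributed weight. Writing this as an expectation over $\w u$ and applying~\eqref{eq:marginal}, this mean equals
\[
\Theta\!\left( n \cdot \EE\!\left[ \mathbf{1}_{\w u \geq w} \min\left\{ \tfrac{\w v \w u}{n}, 1 \right\} \right] \right)
= \Theta\!\left( \EE\!\left[ \mathbf{1}_{\w u \geq w} \min\{\w v \w u,\, n\} \right] \right).
\]
So everything reduces to estimating $\sum_{u'} g(\w{u'})$ for the function $g(x) = \mathbf{1}_{x\geq w}\min\{\w v x, n\}$ over the (random) vertex set, or rather its expectation; equivalently, to integrating $g$ against the tail function $\Pr[D \geq x]$, which is controlled above and below by $x^{1-\beta\pm\gamma}$ by the power-law assumption~\eqref{eq:powerlaw}.

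The key step is to split the integral at the transition weight $w^* := n/\w v$, where $\min\{\w v x, n\}$ switches from the linear branch $\w v x$ to the constant branch $n$. For the regime $w \leq w^*$ (the interesting case; if $w > w^*$ only the constant branch contributes and the bound is cleaner), I would use Lemma~\ref{lem:integral} with $f$ built from $\min\{\w v x, n\}$ and $g$ the tail counting function, and then invoke Lemma~\ref{lem:evalintegral}: on $[w, w^*]$ the integrand $x \cdot x^{1-\beta\pm\gamma}$ has exponent $2-\beta\pm\gamma < 0$ (since $\beta>2$), so the integral is dominated by its lower endpoint and contributes $\Theta(\w v \cdot w \cdot w^{1-\beta\pm\gamma}) = \Theta(\w v w^{2-\beta\pm\gamma})$; on $[w^*, \infty)$ the integrand is $n \cdot x^{1-\beta\pm\gamma}$ with negative exponent $1-\beta\pm\gamma$, dominated again by its lower endpoint $w^*$, contributing $\Theta(n (w^*)^{1-\beta\pm\gamma}) = \Theta(\w v^{\beta-1\mp\gamma} n^{2-\beta\pm\gamma})$. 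A short computation shows that, up to the $\pm\gamma$ slack absorbed into $\eta$, the first term is the larger one exactly when $w \le w^*$, which matches the claimed form $\min\{\w v w^{2-\beta+\eta},\, n w^{1-\beta+\eta}\}$ for the upper bound (b) and $\min\{\w v w^{2-\beta-\eta},\, n w^{1-\beta-\eta}\}$ for the lower bound (a); I would match $\gamma$ to $\eta$ with room to spare as per the paper's convention. The ``in particular'' statement in (a) then follows by dividing the $w = \wmin$ case of $\EE[|\VS{\ge w}{}{}{v}|]$ by $\EE[\deg(v)] = \Theta(\w v)$ from~\eqref{eq:marginal1}, observing that the $\w v$ factor cancels and leaves $O(w^{2-\beta+\eta})$.

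The main obstacle, such as it is, is bookkeeping rather than conceptual: one must be careful that Lemma~\ref{lem:evalintegral} applies cleanly, i.e.\ that $f(x) = \min\{\w v x, n\}$ (suitably shifted so that $f(0)=0$) is continuous, piecewise a monomial with non-negative exponent, and that $f(x)g(x) = f(x) x^{1-\beta\pm\gamma}$ crosses zero exponent at a single point $\tilde w$ — here $\tilde w$ is the lower cutoff $w$ itself on the linear branch and $w^*$ is merely a breakpoint of $f$, so one should really apply the lemma separately on $[w, w^*]$ and $[w^*, w_1]$ and add, exactly as the multi-piece version in the proof of Lemma~\ref{lem:evalintegral} does. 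A second minor point: the lower bound (a) needs the ``$\Theta$'' refinement of Lemma~\ref{lem:evalintegral}, which requires the relevant monomial piece of $f$ to have non-negative exponent on an interval of the form $[(1-\Omega(1))\tilde w, \tilde w]$ or $[\tilde w, (1+\Omega(1))\tilde w]$; this holds since both branches of $\min\{\w v x, n\}$ are genuine monomials (exponents $1$ and $0$) on macroscopic intervals, so the lower bound does not degenerate. Everything else is routine.
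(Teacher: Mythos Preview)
Your approach is essentially the same as the paper's: use the marginal edge probability from Lemma~\ref{lem:marginalProb}, apply Lemma~\ref{lem:integral}, and evaluate via Lemma~\ref{lem:evalintegral}, with the case split governed by whether $w$ lies below or above $n/\w v$. Two small slips to fix: you have the labels reversed throughout (in the statement, (a) is the \emph{upper} bound and (b) is the \emph{lower} bound), and in the ``in particular'' clause you should divide the general-$w$ bound $\EE[|\VS{\ge w}{}{}{v}|] = O(\w v w^{2-\beta+\eta})$ by $\EE[\deg(v)]=\Theta(\w v)$, not the $w=\wmin$ case. Also, on the constant branch $[w^*,\infty)$ the derivative of $f(x)=\min\{\w v x,n\}$ vanishes, so that piece of the integral contributes nothing; the whole upper bound already comes from the boundary term $f(w)\EE|V_{\geq w}|$ together with the linear-branch integral, exactly as in the paper.
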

\begin{proof}
{\it (a)} By Lemma~\ref{lem:marginalProb}, the probability that a vertex $u$ with fixed weight $\w u$ and random position $\x{u}\in \Space$ is adjacent to $v$ is $\Theta(1)\min\{\w u\w v/n,1\}$. The expected number of vertices of weight at least $w$ is at most $O(n w^{1-\beta+\eta})$ by the power-law condition~\eqref{eq:powerlaw}. We distinguish two cases. If $w\w v \geq n$ then the probability to connect to any vertex of weight $w$ is $\Theta(1)$, so $\EE[|\VS{\ge w}{}{}{v}|] = \Theta(\EE[|\VS{\ge w}{}{}{}|])$, and the claim follows. So assume $w\w v \leq n$. Then by Lemma~\ref{lem:integral} we can compute the expectation as the following integral, which we then evaluate using Lemma~\ref{lem:evalintegral} with $\tilde w = n/\w v$.
\begin{align*}
\EE[|\VS{\ge w}{}{}{v}|] & = O\left(nw^{1-\beta+\eta}\frac{w\w v}{n} +\int_w^{\infty}n \w u^{1-\beta+\eta}\frac{d}{d\w u}\min\left\{\frac{\w v \w u}{n},1\right\} d\w u\right)\\
& = O\left(\w vw^{2-\beta+\eta}\right).
\end{align*}
We can write both cases uniformly as $\EE[|\VS{\ge w}{}{}{v}|] = O(\min\{\w v w^{2-\beta+\eta},nw^{1-\beta+\eta}\})$.

The second statement follows because the expected total number of neighbours of $v$ is $\Theta(\w v)$. Therefore, the probability that a random neighbour of $v$ has weight at least $w$ is $\Pr[\{u,v\}\in E\wedge\w u \geq w]/\Pr[\{u,v\}\in E] = O(w^{2-\beta-\eta})$, as required.

{\it (b)} This follows completely analogously to (a), except that we use that the expected number of vertices of weight at least $w$ is at least $\Omega(n w^{1-\beta-\eta})$ by the lower bound in the power-law condition~\eqref{eq:powerlaw}. 
\end{proof}

We often need to bound the expected number of neighbours of a given vertex in some geometric region, which we may do by the following lemma.
\begin{lem} \label{lem:neighExp}
Let $\eta>0$ and $C>1$ be constants, define $m:= \min\{\alpha,\beta-1-\eta\}$ and consider a closed ball $B \subseteq \Space$ of radius $r>0$ centered at $0$. Let $v = (\x v,\w v)$ be a vertex with fixed weight and position. Then
\[
\EE\left[|\VS{}{B}{}{v}|\right]=O(n\V B)\cdot
\begin{cases}
\min\left\{\frac{\w v}{n\V{B}}, 1\right\}, &\text{if } \|\x{v}\| \le Cr,\\ 
\min\left\{\left(\frac{\w{v}}{\|\x v\|^d n}\right)^m,1\right\}&\text{if } \|\x{v}\| \geq Cr.
\end{cases}
\]
\end{lem}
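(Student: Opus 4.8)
\textbf{Proof plan for Lemma~\ref{lem:neighExp}.}

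The plan is to split the ball $B$ into two regimes according to the marginal edge probability, and then integrate over weights using the machinery of Lemma~\ref{lem:integral} and Lemma~\ref{lem:evalintegral}. The key preliminary observation is a pointwise bound on the probability that a random vertex $u$ with weight $\w u$ \emph{and} position $\x u \in B$ is adjacent to $v$. Recall that conditionally on $\x u \in B$, the position $\x u$ is uniform in $B$. When $\|\x v\| \le Cr$, every point of $B$ is within distance $(C+1)r$ of $\x v$ (using that $B$ has radius $r$ and the triangle inequality on the torus), so I would just use the trivial bound that $p(\cdot) = \Theta(1)\min\{\w u \w v / (\|\x u - \x v\|^d n), 1\}$ is at most $O(1)$, and also at most $O((\w u \w v/n) \cdot r^{-d} \cdot \text{(something)})$; more simply, $\Pr[\{u,v\}\in E, \x u \in B \mid \w u] \le \V B \cdot O(1)$ trivially and also $\le O(\w u \w v/n)$ by the Chung--Lu-type marginal of Lemma~\ref{lem:marginalProb} (restricting to $B$ only loses a constant here when $\|\x v\|\le Cr$ since the marginal is dominated by the near range). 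Multiplying by $n$ and summing/integrating over the weight distribution via~\eqref{eq:powerlaw} then gives $O(n\V B)\min\{\w v/(n\V B),1\}$, matching the first case.

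For the far regime $\|\x v\| \ge Cr$, every $\x u \in B$ satisfies $\|\x u - \x v\| = \Theta(\|\x v\|)$ (again triangle inequality, using $C>1$), so for $\alpha < \infty$,
\[
\Pr[\{u,v\}\in E, \x u \in B \mid \w u] = \V B \cdot \Theta(1)\min\left\{\left(\frac{\w u \w v}{\|\x v\|^d n}\right)^\alpha, 1\right\}.
\]
Multiplying by $n$, this is $O(n\V B)\min\{(\w u\w v/(\|\x v\|^d n))^\alpha,1\}$. Now I integrate over $\w u$: by Lemma~\ref{lem:integral} applied to $f(\w u) = \min\{(\w u\w v/(\|\x v\|^d n))^\alpha,1\}$ and $g(w) = |V_{\ge w}|$ with $\EE[|V_{\ge w}|] = O(nw^{1-\beta+\eta})$ from~\eqref{eq:powerlaw}, the exponent of $w$ in $f(w)g(w)$ crosses zero at the threshold $\tilde w$ where $\w u$ makes the edge probability order $1$, i.e.\ $\tilde w = \|\x v\|^d n/\w v$ (capped appropriately), and the exponent condition $\alpha + 1 - \beta + \eta > 0 > 1 - \beta + \eta$ of Lemma~\ref{lem:evalintegral} holds for small $\eta$ provided $\alpha > \beta - 1 - \eta$ — which is exactly why $m = \min\{\alpha, \beta-1-\eta\}$ appears; when $\alpha$ is small the near end of the integral dominates and the effective exponent is $\beta-1-\eta$ instead of $\alpha$. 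Lemma~\ref{lem:evalintegral} then yields $\EE[|\VS{}{B}{}{v}|] = O(n\V B)\cdot O((\w v/(\|\x v\|^d n))^m)$ when this is $\le 1$, and $O(n\V B)$ otherwise, which is the second case. The threshold case $\alpha = \infty$ is handled separately (or via Observation~\ref{obser:infinitealpha}): there $B$ only contributes vertices $u$ with $\|\x u - \x v\| \le C_2(\w u\w v/n)^{1/d}$, forcing $\w u \ge \Omega(\|\x v\|^d n/\w v)$, and summing their expected count over~\eqref{eq:powerlaw} gives the same bound with $m = \beta-1-\eta$.

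The main obstacle I expect is bookkeeping the two-sided cap (both $f$ capped at $1$ and the conclusion capped at $1$) together with the case split on whether $\|\x v\|^d n / \w v$ exceeds $\wmin$ or not — i.e.\ whether the "large weight" end of the integral is even reachable — and making sure the crossover from the $\alpha$-exponent to the $(\beta-1-\eta)$-exponent is correctly captured by Lemma~\ref{lem:evalintegral}'s hypothesis on the sign of the exponent of $fg$. A secondary nuisance is that one must be slightly careful that restricting positions to $B$ in the \emph{near} regime $\|\x v\|\le Cr$ does not lose more than a constant factor compared to the full-space marginal; this is true because when $v$ is within $O(r)$ of the ball, the full-space marginal of Lemma~\ref{lem:marginalProb} is already $\Theta(\min\{\w u \w v/n, 1\})$ and the contribution from the $\Theta(1)$-probability near range alone (which lies within $B$ up to constants in the relevant weight range) realizes this order. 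None of these steps requires a genuinely new idea beyond the integral toolkit already set up.
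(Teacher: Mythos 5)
Your approach is essentially the same as the paper's: split into the near ($\|\x v\| \le Cr$) and far ($\|\x v\| \ge Cr$) regimes, and in the far regime integrate over weights via Lemma~\ref{lem:integral} and Lemma~\ref{lem:evalintegral} with a case split on $\alpha$ versus $\beta-1-\eta$. For the near case, the paper is more direct than you need to be: it simply takes the minimum of two trivial upper bounds, $\EE[|\VS{}{B}{}{v}|]\le n\V B$ (expected mass of $B$) and $\EE[|\VS{}{B}{}{v}|]\le\EE[\deg(v)] = O(\w v)$, with no weight integration or worry about constants lost by restricting positions to $B$; your version arrives at the same place by a longer route.

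One slip to fix in the far case: your sentence ``when $\alpha$ is small the near end of the integral dominates and the effective exponent is $\beta-1-\eta$ instead of $\alpha$'' has the two outcomes swapped. When $\alpha < \beta-1-\eta$ the exponent of $w$ in $f(w)g(w)$ is negative everywhere, so Lemma~\ref{lem:evalintegral} applies with $\tilde w = \wmin=\Theta(1)$, and $f(\wmin)g(\wmin)$ gives the effective exponent $\alpha$. When $\alpha \ge \beta-1-\eta$ (including $\alpha=\infty$ via Observation~\ref{obser:infinitealpha}), the crossover $\tilde w = \|\x v\|^d n/\w v$ is the dominating weight and the effective exponent is capped at $\beta-1-\eta$. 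This is consistent with the correct $m = \min\{\alpha,\beta-1-\eta\}$ you wrote, but the explanation should match. It would also help to make explicit what you only allude to in your ``bookkeeping'' remark: one should first dispose of the regime $\w v \ge \|\x v\|^d n$ (where the claimed minimum is $1$ and the trivial bound $n\V B$ suffices) so that the integral only needs to be evaluated when $\w v < \|\x v\|^d n$, which is what makes $\tilde w \ge \wmin$ and the application of Lemma~\ref{lem:evalintegral} well-posed.
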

\begin{proof}
In the first case $\|\x{v}\| \le Cr$, the expected number of vertices in $B$ is $n\V{B}$, so clearly $\EE\left[|\VS{}{B}{}{v}|\right] \leq n\V{B}$. On the other hand, the expected number of neighbours of $v$ is $O(\w v)$, so $\EE\left[|\VS{}{B}{}{v}|\right] =O(1) \min\{\w{v},n\V{B}\}$.

For the second case, as before $\EE\left[|\VS{}{B}{}{v}|\right] \leq n\V{B}$. This proves the claim in the case $\w v \geq \|\x v\|^d n$, so assume otherwise. Observe that every vertex in $B$ has distance $\Theta(\|\x v\|)$ from $v$, and that the expected number of vertices in $B$ of weight at least $w$ is $O(n\V{B} w^{1-\beta+\eta})$. 

Consider first the case $\alpha < \beta-1-\eta$. Then by Lemma~\ref{lem:integral},
\begin{align*}
\EE\left[|\VS{}{B}{}{v}|\right]& = O\left(n\V{B}\min\left\{\left(\frac{\w v}{\|\x v\|^d n}\right)^\alpha,1\right\}  \right. \\ 
& \qquad \qquad \left. + \int_{\wmin}^{\infty} n\V{B} w^{1-\beta+\eta} \frac{d}{dw}\min\left\{\left(\frac{w \w v}{\|\x v\|^d n}\right)^\alpha,1\right\} dw\right).
\end{align*}
Note that the exponent of $w$ in the integrand is always negative, no matter which value the minimum attains. Moreover, recall that we assumed $\w v < \|\x v\|^d n$ and hence for $w = \wmin$ the minimum is $O((\w v/(\|\x v\|^d n))^\alpha)$. Thus by applying Lemma~\ref{lem:evalintegral} (with $\tilde w = \wmin = \Theta(1)$), the integral also evaluates to $O(n\V{B}\cdot \min\{(\w v/(\|\x v\|^d n))^\alpha,1\})$, as required. 

On the other hand, if $\alpha+1-\beta+\eta \geq 0$, then by Observation~\ref{obser:infinitealpha} we may restrict ourselves to $\alpha < \infty$, so we can estimate using Lemma~\ref{lem:integral} (with lower bound $0$)
\begin{align*}
\EE\left[|\VS{}{B}{}{v}|\right] & = O\left(\int_{0}^{\infty} n\V{B} w^{1-\beta+\eta} \frac{d}{dw}\min\left\{\left(\frac{w \w v}{\|\x v\|^d n}\right)^\alpha,1\right\} dw\right).
\end{align*}
 This integral evaluates to $O(n\V{B}\cdot (\w v/(\|\x v\|^d n))^{\beta-1-\eta})$, by Lemma~\ref{lem:evalintegral} (with $\tilde w = \|\x v\|^d n / \w v$). Since we have already shown that $\EE\left[|\VS{}{B}{}{v}|\right] \leq n\V B$, this proves the claim.
\end{proof}

In the last lemma of this section we show that whp there are no vertices whose weight is much larger than their distance from the origin. 

\begin{lem}\label{lem:nolargeweights}
Let $\eta>0$ be a constant and consider a closed ball $B\subseteq\Space$ centered at the origin $0$ satisfying $n\V{B}=\omega(1)$. Then with probability at least $1-(n\V{B})^{-\eta/4}$ there is no vertex $v=(\x{v},\w{v})$ with $\x v \in \Space\setminus B$ and $\w v \geq ((2\|\x v\|)^dn)^{1/(\beta-1-\eta)}$. 
\end{lem}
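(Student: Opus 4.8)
The plan is to bound the expected number of ``bad'' vertices — those with position $\x v \in \Space\setminus B$ and weight $\w v \geq \wmaxupp(\|\x v\|^d n) = (\|\x v\|^d n)^{1/(\beta-1-\eta)}$ — and then apply Markov's inequality. The natural way to organise this is to dyadically decompose the complement $\Space\setminus B$ into annuli according to the distance from the origin. For an integer $j \geq 0$, let $A_j$ be the set of positions $\x{}$ with $2^j r \leq \|\x{}\| < 2^{j+1} r$ (intersected with $\Space$; note $j$ ranges only up to $O(\log(1/r))$ since $\|\x{}\| \leq 1/2$ on the torus). On $A_j$ we have $\|\x{}\|^d n = \Theta(2^{jd} r^d n)$, so a vertex in $A_j$ is bad only if its weight is at least $\Omega((2^{jd} r^d n)^{1/(\beta-1-\eta)})$, call this threshold $w_j$.

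The key computation is then: the expected number of vertices with position in $A_j$ is $n\V{A_j} = O(2^{jd} r^d n)$, and by the power-law upper bound~\eqref{eq:powerlaw} (applied with a parameter smaller than $\eta$, using that weights are drawn independently of positions) the probability that such a vertex has weight at least $w_j$ is at most $c_2 w_j^{1-\beta+\eta/2} = O((2^{jd} r^d n)^{(1-\beta+\eta/2)/(\beta-1-\eta)})$. Multiplying, and writing $\mu_j := 2^{jd} r^d n$, the expected number of bad vertices in $A_j$ is
\[
O\!\left(\mu_j \cdot \mu_j^{(1-\beta+\eta/2)/(\beta-1-\eta)}\right) = O\!\left(\mu_j^{1 + (1-\beta+\eta/2)/(\beta-1-\eta)}\right) = O\!\left(\mu_j^{-\Omega(\eta)}\right),
\]
since the exponent simplifies to $(\eta/2 - \eta)/(\beta-1-\eta) = -\Omega(\eta)$ for fixed $\beta \in (2,3)$ and small $\eta$. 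Because $\mu_j = 2^{jd}\mu_0$ with $\mu_0 = r^d n = \omega(1)$, these bounds form a geometric series in $j$ dominated by its first term $\mu_0^{-\Omega(\eta)} = (r^d n)^{-\Omega(\eta)}$. Summing over the $O(\log(1/r))$ relevant values of $j$ only costs a logarithmic factor, which is absorbed into the $-\Omega(\eta)$ exponent (possibly after shrinking the hidden constant). Hence the expected number of bad vertices is $(r^d n)^{-\Omega(\eta)}$, and Markov's inequality gives the claim.

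The only mild subtlety — and the step I would be most careful about — is the bookkeeping on exponents: one must apply the power-law bound~\eqref{eq:powerlaw} with a constant $\gamma$ strictly smaller than $\eta$ (say $\gamma = \eta/2$) so that the resulting exponent on $\mu_j$ is genuinely negative and of order $-\Omega(\eta)$, and one must check that the geometric decay in $j$ really does beat the $O(\log(1/r))$ number of annuli, which it does since a logarithmic factor is subpolynomial in $r^d n$. Everything else is a routine first-moment argument using that vertex positions and weights are independent (so that the Poisson count in $A_j$ and the weight distribution factorise) together with Lemma~\ref{lem:integral}-style estimates, or here just the crude bound on $\Pr[D \geq w_j]$ directly.
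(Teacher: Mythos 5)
Your proof is correct and takes essentially the same first-moment approach as the paper: the paper bounds $\EE[\hat n]$ by a single radial integral $\int_r^\infty (r')^{d-1} n\,\bigl((r')^d n\bigr)^{(1-\beta+\eta/2)/(\beta-1-\eta)}\,dr'$ and evaluates it via Lemma~\ref{lem:evalintegral}, whereas you discretise the same computation into dyadic annuli and sum a geometric series; these are interchangeable. One small remark: your fallback comment about ``absorbing the $O(\log(1/r))$ factor'' is unnecessary, since the per-annulus bound $\mu_j^{-\Omega(\eta)} = 2^{-\Omega(\eta)jd}(r^dn)^{-\Omega(\eta)}$ already decays geometrically in $j$, so the sum is $O\bigl((r^dn)^{-\Omega(\eta)}\bigr)$ with no logarithmic loss at all — exactly the dominated-by-the-first-term observation you also make.
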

\begin{proof}
Let $\hat n$ be the number of such vertices, and denote the radius of $B$ by $0<r\le 1/2$. Let $r'>r$, then the probability density to find a vertex $v=(\x{v},\w{v})$ with $\|\x{v}\|=r'$ is equal to the volume of an $r'$-sphere\footnote{In $(\RR^d,\|\cdot\|_\infty$).} around $0$  that is intersected with $\Space$. By ignoring the intersection with $\Space$, we can only make the volume larger, so it is at most $O((2r')^{d-1}n)$. Moreover, the probability that a vertex has weight at least $w$ is at most $O(w^{1-\beta+\eta/2})$ by the power-law condition~\eqref{eq:powerlaw} (using $\gamma=\eta/2$). Hence, by Lemma~\ref{lem:integral} and Lemma~\ref{lem:evalintegral} we obtain
\begin{align*}
\EE\left[\hat n\right] & = O(1)\int_{r}^{\infty} (2r')^{d-1}n ((2r')^d n)^{(1-\beta+\eta/2)/(\beta-1-\eta)}dr' \le ((2r)^dn)^{-\eta/4},
\end{align*}
and the statement follows by Markov's inequality since $(2r)^d=\V{B}$.
\end{proof}

\section{Evolution of the process}\label{sec:evolution}

In this section we will prove two theorems which describe the geometrical evolution of the process in detail. First we show that in the supercritical regime the process will reach certain regions whp in a given time, yielding a \emph{lower bound on its speed}. This lower bound also applies in the critical regime if in the first step sufficiently many heavy vertices were activated, an event which holds with probability $\Omega(1)$.  Afterwards, we show that certain regions cannot be reached too early in the process, providing an \emph{upper bound on its speed}. From this we then derive Theorem~\ref{thm:containment} in Section~\ref{sec:containment}.

We start by defining two families of nested regions and a number of related parameters which will be crucial for describing the evolution of the process.
\begin{defn}\label{def:BallsEtc}
Set $\zeta := 1/(\beta-2)$ and note that $\zeta>1$. Moreover, let $0<\eps<\zeta-1$ be a constant and let $\eta=\eta(\eps)>0$ be a constant which is sufficiently small compared to $\eps$.\footnote{cf.~Remark~\ref{rem:epseta}} In the case of weak power laws, we additionally require that $\eta < (\beta-1)\epsilon_0/(1-\epsilon_0)$, where $\epsilon_0$ is the constant from Theorem~\ref{thm:main} (iv) and (v).
\begin{itemize}
\item For  all integers $i\geq 0$, we set 
\begin{align*}
\Nulow{0} & := \Nuinit & \text{ and } && \Nulow{i}  = \Nulow{i}(\eps) &:= \Nulow{0}^{(\zeta-\eps)^i}, \\
\Nuupp 0 = \Nuupp 0(\eps) & := \Nuinit^{(\beta-1)/(\beta-2)+\eps} & \text{ and } && \Nuupp i=\Nuupp i(\eps) &:= \Nuupp 0^{(\zeta+\eps)^i}.
\end{align*}
We then define $\Blow i = \Blow i(\eps)$ and $\Bupp i = \Bupp{i}(\eps)$ to be the closed ball centered around $0$ of volume $\V{\Blow{i}}:=\min\{\Nulow{i}(\eps)/n,1\}$ and $\V{}(\Bupp{i})=\min\{\Nuupp i(\eps)/n,1\}$, respectively. 
\item For all integers $i,\ell\ge 0$, we abbreviate $\Nulowtruncated{i}  :=\min\{\Nulow{i},n\}=n\V{\Blow{i}}$ and define 
\begin{align*}
\whlow{i}{\ell}  := \Nulowtruncated{i}^{(\zeta-\eps)^{-\ell}/(\beta-1+\eta)}\qquad \text{ and }\qquad\whlow{i}{}  :=\whlow{i}{0}.
\end{align*}
\item We denote by $\iCompleteSpace$ the smallest integer $i\ge 0$ such that $\Blow{i}=\Space$, i.e.\ we define
$$
\iCompleteSpace:=\min\{i\in\NN\colon \Nulow{i}\ge n\}.
$$
\end{itemize}
\end{defn}
First note that for all integers $i,\ell\ge 0$ the following two inequalities are satisfied
\begin{equation}\label{eq:Nulowtruncated}
\Nulowtruncated{i+1}\le\Nulowtruncated{i}^{\zeta-\eps} \qquad\text{ and } \qquad\whlow{i+1}{\ell} \le \whlow{i}{\ell}^{\zeta-\eps}.
\end{equation}
For $0\le i<\iCompleteSpace$, i.e.\ when $\Nulow{i}< n$, we have $\Nulowtruncated{i}=\Nulow{i}=\Nulow{i+1} ^{1/(\zeta-\eps)}\ge\Nulowtruncated{i+1}^{1/(\zeta-\eps)}$, and for $i\ge \iCompleteSpace$ this follows since $\Nulowtruncated{i}=\Nulowtruncated{i+1}$ and $\zeta-\eps >1$. Also note that in the case $0\le i<\iCompleteSpace$ we can simply write $\whlow{i}{\ell} = \Nuinit^{(\zeta-\eps)^{i-\ell}/(\beta-1)}$.

In order to understand the intuition behind the definition of the families $\{\Blow{i}\}$ and $\{\Bupp{i}\}$, we first observe that $\Blow i(\eps) \subseteq \Bupp i(\eps')$ for all $i\geq 0$ and all $0<\eps,\eps' <\zeta-1$. Heuristically speaking, in Section~\ref{sec:prooflower} we will show that vertices in $\Blow{i}$ of weight at least $\whlow{i}{}$ will be very likely to be infected by time $i$; while no vertices outside $\Bupp{i}$ are likely to be infected by this time as proven in Section~\ref{sec:proofupper}. The volumes of $\Blow{i}$ and $\Bupp{i}$ grow doubly exponentially in $i$ and are denote by $\Nulow{i}/n$ and $\Nulow{i}/n$, except that we need to truncate them at $1$, which is captured by $\Nulowtruncated{i}$. The vertices of largest weight in $\Blow{i}$ have weight roughly $\whlow{i}{} = \whlow{i}{0}$, and it may be helpful to imagine that these are the vertices in $\Blow{i}$ that are first infected, namely in round $i$. Afterwards, vertices of successively smaller weight become infected: in round $i+\ell$ vertices of weight at least $\whlow{i}{\ell}$ are likely infected, where $\whlow{i}{\ell}$ is decreasing in $\ell$. This intuition is also depicted in Figure~\ref{fig:bootstrap_percolation}. Of course, this picture is overly simplified and this intuition will be made rigorous in the following two sections. 

\begin{remark}\label{rem:epseta}
When our proofs involve the parameters $\eps,\eta >0$ from Definition~\ref{def:BallsEtc}, then by the notation $O(\eps), O(\eta)$ etc.~we implicitly mean that the (positive) hidden constants only depend on the parameters $d, \alpha, \beta, \wmin, \pld$, and $k$ of the model, but not on $\eps$ or $\eta$. To enhance readability, in all proofs we stick to the convention that if $\eps$ and $\eta$ occur together, then $\eta= \eta(\eps) >0$ is chosen so small that $C \eta < c \eps$ for all constants $C$ and $c$ that depend only on the model parameters. In particular, the expression $\Omega(\eps)-O(\eta)$ will always be positive in our proofs.
\end{remark}

\subsection{Lower bound on the speed}\label{sec:prooflower}

In this section we show lower bounds for the probability that a vertex in a specific region and with a specific weight will be active in some round, provided that we start in the supercritical case. Recall that the supercritical case is defined by $\infpar = \omega(\infthr)$ if the weights follow a strong power-law, and $\infpar\ge \infthr^{1-\epsilon_0}$ for some constant $\epsilon_0>0$ otherwise. The same bounds also hold in the critical case if at least $k$  ``heavy'' vertices are activated in the first round, which happens with probability $\Omega(1)$. 

The key idea is that the infection spreads in two ways: \emph{(i)} from heavy vertices (weight at least $\whlow{i}{}$) in one region ($\Blow{i}$) to heavy vertices (weight at least $\whlow{i+1}{}$) in the next region $\Blow{i+1}$, where the volume of the region increases by an exponent of (at most) $\zeta-\eps$ in each step, and \emph{(ii)} from vertices of weight at least $\whlow{i}{\ell}$ to nearby vertices of weight at least $\whlow{i}{\ell+1}=\whlow{i}{\ell}^{1/(\zeta-\eps)}$.

More formally, for any integers $i\ge0$ and $\ell\ge 0$ we abbreviate the set of all vertices in $\Blow i$ of weight at least $\whlow{i}{}$ by
$$
\heavyVertices{i}{}:=\VS{\ge \whlow{i}{}}{\Blow{i}}{}{},
$$
 and we call such vertices \emph{heavy} if the value of $i$ is clear from the context. As usual, superscripts in the notation denote active vertices, so $\heavyVertices{i}{\le j}:=\VS{\ge \whlow{i}{}}{\Blow{i}}{\le j}{}$. Furthermore we denote the event that in round $i+3$ all vertices in $\heavyVertices{i}{}$ are active by 
 $$
 \eventH{i}:=\left\{\heavyVertices{i}{}\subseteq \VS{}{}{\le i+3}{} \right\}.
 $$ 
The following theorem gives lower bounds on the probability that a vertex is active in some round.

\begin{thm}\label{thm:speedlower}
Let $\zeta$, $\eps$ and $\eta$ be as in Definition~\ref{def:BallsEtc}. Assume furthermore that we are in the supercritical case, or instead that $|\heavyVertices{0}{\leq 1}|\ge k$. Then the following is true:
\begin{enumerate}[(a)]
\item Whp, it holds that for all $i\geq 0$ the bounds $|\heavyVertices{i}{}| = \Nulowtruncated i^{\Omega(\eta)}$ and $|\heavyVertices{i}{}|=O(\Nulowtruncated{i})$ are satisfied, where the hidden constants are uniform over all $i\ge 0$.
\item Whp all the events $\eventH{i}$ occur.
\item There exist constants $C_0,C_1,C_2>0$ such that the following holds: Let $v = (\x v,\w v)$ be any vertex with fixed position and weight and let $i, \ell \geq 0$ be such that $\x v \in \Blow{i}$ and $\w v \geq \max\{\whlow{i}{\ell},C_0\}$. Then for sufficiently large $n \in \NN$,
\[
\Pr[v \in \VS{}{}{\le i+3+\ell}{} \mid \eventH{0},\ldots, \eventH{i}] \geq 1-\exp\left[-C_1\Nulowtruncated i^{{C_2(\zeta-\eps)^{-\ell}}}\right]. 
\]
\end{enumerate}
\end{thm}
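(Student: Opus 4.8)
\textbf{Proof plan for Theorem~\ref{thm:speedlower}.}

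The plan is to prove the three parts essentially by a single induction on $i$ (with an inner induction on $\ell$ for part (c)), following the two-mechanism intuition spelled out before the statement: heavy-to-heavy transmission between successive balls $\Blow i$, and local densification from weight $w$ down to weight $w^{1/(\zeta-\eps)}$ inside a fixed ball.

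First I would handle the base case and part (a). By Lemma~\ref{lem:nolargeweights} applied to $\Blow i$ (whose expected number of vertices is $\Nulow i = \omega(1)$), whp every heavy vertex has weight at most $\wmaxupp(\Nulow i) = \Nulow i^{1/(\beta-1-\eta)}$, and by the power-law bounds the expected number of vertices in $\Blow i$ of weight at least $\whlow i0 = \Nulow i^{1/(\beta-1+\eta)}$ is $\Nulow i \cdot (\whlow i 0)^{1-\beta\pm\eta} = \Nulow i^{\Omega(\eta)}$ from below and $O(\Nulow i)$ from above; a Chernoff bound plus a union bound over the $O(\log n)$ relevant scales of $i$ (recall $\Nulow i$ grows doubly exponentially, so only $O(\log\log n)$ values of $i$ are nontrivial) gives (a). For the supercritical case I must also check that $\geq k$ heavy vertices of $\Blow 0 = \Binit$ are infected at time $0$: the expected number of vertices in $\Binit$ of weight $\geq \whlow 00 = \Nuinit^{1/(\beta-1+\eta)}$ is $\Nuinit \cdot \Nuinit^{(1-\beta+\eta)/(\beta-1+\eta)} = \Nuinit^{\Omega(\eta)} = \omega(1)$, each is infected independently with probability $\infpar = \omega(\infthr) = \omega(\Nuinit^{-1/(\beta-1)})$, so the expected number infected is $\Nuinit^{\Omega(\eta)} \cdot \omega(\Nuinit^{-1/(\beta-1)})$, which I need to be $\omega(1)$ — this forces the precise bookkeeping of $\eta$ versus $\eps$ and is where the definition $\infthr = \Nuinit^{-1/(\beta-1)}$ together with $\infpar$ being a \emph{strictly} larger power (or $\omega$ of it) is used; in the critical regime this is exactly the $\Omega(1)$-probability event $|\heavyVertices 0{\le 1}| \geq k$ that we assume. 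In all cases $\eventH 0$ then needs to be bootstrapped from this seed, which is really part (c) with $i=0$.

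The heart is part (c), by induction on $\ell$ for fixed $i$, conditioned on $\eventH 0,\ldots,\eventH i$ (so in particular all of $\heavyVertices i{}$ is active by round $i+3$ — this is the content of $\eventH i$, and part (b) is just the statement that all these events hold whp, which follows once (c) is established with $\ell$ large enough to sweep $\Blow i$ down to weight $O(1)$, combined with (a) and a union bound). For the inductive step, fix $v \in \Blow i$ with $\w v \geq \whlow i\ell = (\whlow i{\ell-1})^{1/(\zeta-\eps)}$. By the induction hypothesis, each vertex $u \in \Blow i$ with $\w u \geq \whlow i{\ell-1}$ is active by round $i+3+\ell-1$ with probability $1 - \exp[-C_1 \Nulow i^{C_2(\zeta-\eps)^{-\ell+1}}]$. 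I would lower-bound the number of such $u$ that are neighbours of $v$: by Lemma~\ref{lem:weightofneighbours}(b) (or a direct computation via Lemma~\ref{lem:neighExp} restricted to $\Blow i$, using that $v \in \Blow i$ so the "$\|\x v\| \le Cr$" branch applies), $\EE[|\VS{\ge \whlow i{\ell-1}}{\Blow i}{}{v}|] = \Omega(\min\{\w v (\whlow i{\ell-1})^{2-\beta-\eta}, \Nulow i (\whlow i{\ell-1})^{1-\beta-\eta}\})$; plugging in $\w v \geq (\whlow i{\ell-1})^{1/(\zeta-\eps)} = (\whlow i{\ell-1})^{\beta-2-\Theta(\eps)}$ shows $\w v (\whlow i{\ell-1})^{2-\beta-\eta} = (\whlow i{\ell-1})^{\Omega(\eps)-O(\eta)} = \Nulow i^{\Omega((\zeta-\eps)^{-\ell+1})}$. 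Since these potential neighbours are "far from $v$ in weight", their presence and their edges to $v$ are governed by a Chernoff bound, so $v$ has $\geq k$ such neighbours with failure probability $\exp[-\Omega(\Nulow i^{\Omega((\zeta-\eps)^{-\ell+1})})]$; combining with a union bound over the (few) neighbours possibly failing to be active, the total failure probability is $\exp[-\Omega(\Nulow i^{C_2(\zeta-\eps)^{-\ell}})]$ after adjusting $C_2$, and $v$ becomes active in round $(i+3+\ell-1)+1 = i+3+\ell$. For the $\ell=0$ base of this inner induction, $v$ has weight $\geq \whlow i0$, i.e.\ $v$ itself is heavy, so by $\eventH i$ it is active by round $i+3$ — with failure probability absorbed into $\eventH i$ (formally I would treat $\ell = 0$ as "$v$ gets $\geq k$ active heavy neighbours from $\heavyVertices{i-1}{}$", which is the inter-ball step, see below).

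The remaining ingredient, which also closes the outer induction on $i$, is the inter-ball step: I must show $\eventH{i+1}$ follows from $\eventH 0,\ldots,\eventH i$ whp, i.e.\ every $v \in \heavyVertices{i+1}{} \subseteq \Blow{i+1}$ is active by round $i+4$. Here $\Blow{i+1}$ has volume $\Nulow{i+1}/n = \Nulow i^{\zeta-\eps}/n$, so it contains $\Blow i$ but is polynomially (in $\Nulow i$) larger. A heavy vertex $v$ of $\Blow{i+1}$ has weight $\geq \whlow{i+1}0 = \Nulow i^{(\zeta-\eps)/(\beta-1+\eta)}$; I would show it has $\geq k$ neighbours among $\heavyVertices i{} = \VS{\ge \whlow i0}{\Blow i}{}{}$ (all active by round $i+3$ by $\eventH i$). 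Even in the worst case $v$ sits at the boundary of $\Blow{i+1}$, so its distance to $\Blow i$ is $\Theta((\Nulow{i+1}/n)^{1/d})$; by Lemma~\ref{lem:neighExp} (case $\|\x v\| \ge Cr$, $m = \min\{\alpha,\beta-1-\eta\}$) the expected number of such neighbours is $\Omega(\Nulow i \cdot (\w v (\whlow i0)^{?}/(\|\x v\|^d n))^{m})$ — more carefully, one integrates the weight of the partner over $[\whlow i0,\wmaxupp(\Nulow i)]$ as in the proof of Lemma~\ref{lem:weightofneighbours}(b), and the exponents are arranged (this is precisely why $\zeta - \eps$ and not $\zeta$ is the growth rate, and why $\whlow i0$ sits at exponent $1/(\beta-1+\eta)$) so that this expectation is $\Nulow i^{\Omega(\eps) - O(\eta)} = \Nulow i^{\Omega(\eps)}$; Chernoff then gives $\geq k$ active heavy neighbours for $v$ except with probability $\exp[-\Nulow i^{\Omega(\eps)}]$, and a union bound over the $|\heavyVertices{i+1}{}| = O(\Nulow{i+1}) = \Nulow i^{O(1)}$ candidates $v$ (using (a)) leaves failure probability $o(1)$ summable over $i$. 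This is the step I expect to be the main obstacle: it requires the edge-probability exponent to cooperate with the weight- and volume-growth exponents simultaneously, the role of the condition $\alpha > 1$ (and implicitly the interplay with $\beta - 1$) must be tracked through $m = \min\{\alpha, \beta-1-\eta\}$ in Lemma~\ref{lem:neighExp}, and one must be careful that the events "$v$ is heavy in $\Blow{i+1}$" for different $v$ are not independent — but since we only need a union bound over the overestimate $O(\Nulow{i+1})$ of $|\heavyVertices{i+1}{}|$ and the per-vertex failure probability is doubly-exponentially small in $\Nulow i$, independence is not needed. Once $\eventH{i+1}$ is in hand, parts (a), (b) follow by union-bounding over all $i$ (only $O(\log\log n)$ of which are nontrivial, the rest having $\Blow i = \Space$ trivially or $\Nulow i \geq n$), and (c) has been proved along the way.
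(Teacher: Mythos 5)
Your high-level structure — outer passage from $\Blow{i}$ to $\Blow{i+1}$ along heavy vertices, inner induction on $\ell$ inside a fixed ball, Poisson/Chernoff tail bounds and union bounds — matches the paper's. But several of the concrete tools you reach for do not do the job you need, and one step has a genuine quantitative gap.

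\emph{Seeding $\eventH{0}$ in the strong power-law supercritical case.} You write that the expected number of infected heavy vertices is $\Nuinit^{\Omega(\eta)}\cdot\omega(\Nuinit^{-1/(\beta-1)})$ and say this should be $\omega(1)$ after ``precise bookkeeping''. But this is exactly where a direct one-step argument fails: a heavy vertex has weight only $\whlow{0}{0}=\Nuinit^{1/(\beta-1+\eta)}$, which is a strictly \emph{smaller} power of $\Nuinit$ than $1/\infthr=\Nuinit^{1/(\beta-1)}$, so $\infpar\whlow{0}{0}$ is not $\omega(1)$ when only $\infpar=\omega(\infthr)$ is assumed. (Also, the quantity you want is not ``the heavy vertex is itself in the initial infection'' but ``it has $\geq k$ initially infected neighbours''.) The paper resolves this with an extra intermediate stage: it introduces a weight $w'=1/\infpar'$ with $\Nuinit^{-1/(\beta-1)}\ll\infpar'\ll\min\{\infpar,\whlow{0}{0}^{-1}\}$, shows that $\omega(1)$ vertices of weight $\geq w'$ in $\Binit$ become active at time $1$ (for these $\infpar w'=\infpar/\infpar'=\omega(1)$ holds), and only then passes activation on to the $\heavyVertices{0}{}$ vertices. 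Without this intermediate weight scale your argument does not close.

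\emph{Inner inductive step of (c).} To lower bound the number of already-active neighbours of $v$, you cite Lemma~\ref{lem:weightofneighbours}(b) and Lemma~\ref{lem:neighExp}. Neither is the right tool: Lemma~\ref{lem:weightofneighbours}(b) counts neighbours with no position restriction, so the resulting vertices are not guaranteed to lie in $\Blow i$, which you need in order to invoke the induction hypothesis; and Lemma~\ref{lem:neighExp} is an \emph{upper} bound, not a lower bound. The paper instead constructs an explicit ball around $v$ of radius $\r{\ell}=(\Nulow{i}^{(\zeta-\eps)^{-\ell+1}}/n)^{1/d}$, shows (by direct inspection of the edge probability and the weight thresholds) that every vertex of weight $\geq\whlow{i}{\ell-1}$ in that ball connects to $v$ with probability $\Omega(1)$, and notes that at least a $2^{-d}$ fraction of the ball lies inside $\Blow{i}$ even if $v$ sits on the boundary. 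That is the construction you need.

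\emph{Inter-ball step.} Your plan again invokes Lemma~\ref{lem:neighExp} (an upper bound) to produce a lower bound, and also over-complicates matters by integrating over the partner's weight. The paper's observation is much simpler and bypasses all of this: \emph{any} pair $v_{i-1}\in\heavyVertices{i-1}{}$, $v_{i}\in\heavyVertices{i}{}$ has distance at most $(\Nulow{i}/n)^{1/d}$ and weight product at least $\whlow{i-1}{0}\whlow{i}{0}$, so that $\whlow{i-1}{0}\whlow{i}{0}/\Nulow{i}=\Nulow{i}^{((\zeta-\eps)^{-1}+1)/(\beta-1+\eta)-1}=\Nulow{i}^{\Omega(\eps)-O(\eta)}=\omega(1)$, hence the pair is adjacent with probability $\Omega(1)$; a Chernoff bound over the $|\heavyVertices{i-1}{}|=\Nulow{i-1}^{\Omega(\eta)}$ candidates and a union bound over $\heavyVertices{i}{}$ and over $i$ then give $\eventH{i}$ for all $i$ simultaneously. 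Your instinct that the exponents of $\zeta-\eps$, $\beta-1+\eta$, and the volume growth must conspire is right, but the algebra is already contained in the pairwise edge-probability calculation; no heavy machinery is required.

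Finally, a minor point: Lemma~\ref{lem:nolargeweights} is not needed for (a); Poisson concentration of $|\heavyVertices{i}{}|$ alone gives both bounds.
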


The theorem agrees with the above intuition in the following sense: if $j$ is the first round in which a vertex has, say, probability $1/2$ to be active according to the bound in Theorem~\ref{thm:speedlower} (c), then $j$ agrees with the round that is predicted by the above intuition, up to additive constants. We will see in Section~\ref{sec:inftime} that Theorem~\ref{thm:speedupper} provides matching lower bounds on $j$, up to lower order terms. So in this sense, Theorem~\ref{thm:speedlower} is tight. 
\begin{rem}\label{rem:speedlower}
Our proof will in fact show that (c) still holds if we replace $\Blow i$ by an arbitrary ball of the same volume, and that it suffices if only a constant fraction of all heavy vertices is active. 

More precisely, let $B$ be any ball and restrict the process to $B$, i.e.\ vertices become infected only if they lie in $B$ and have at least $k$ infected neighbours in $B$.  Moreover, let $\eventH{i',B}$ be the event that in round $i'$ at least half of the vertices in $B$ of weight at least $(n\V B)^{1/(\beta-1+\eta)}$ are active. Then there are constants $C_0,C_1,C_2>0$ such that for any vertex $v = (\x v,\w v)$ with fixed position $\x v \in B$, and with fixed weight $\w v \geq\max\left\{ (n\V B)^{(\zeta-\eps)^{-\ell}/(\beta-1+\eta)},C_0\right\}$ we have
\[
\Pr[v \in \VS{}{}{\le i'+\ell}{} \mid \eventH{i',B}] \geq 1-\exp\left[-C_1 (n\V B)^{C_2 (\zeta-\eps)^{-\ell}}\right],
\]
in terms of the probability measure of the restricted process. 

For the sake of readability, we omit the details and prove Theorem~\ref{thm:speedlower} only in the case $B = \Blow i$. 
\end{rem}\medskip

\begin{proof}[Proof of Theorem~\ref{thm:speedlower}] 
First, let us observe that  for all $i\ge\iCompleteSpace$, i.e.\ when $\Nulow{i}\ge n$, we have $\Blow{i}=\Blow{\iCompleteSpace}=\Space$, $\heavyVertices{i}{}=\heavyVertices{\iCompleteSpace}{}$, and $\whlow{i}{\ell}=\whlow{\iCompleteSpace}{\ell}$ for all $\ell\ge 0$. Consequently, the event $\eventH{\iCompleteSpace}$ implies the event $\eventH{i}$, and we can restrict ourselves to $0\le i\le \iCompleteSpace$ when proving (a), (b), and (c).

{\it (a)} For $0\le i\le \iCompleteSpace$, by definition of $\heavyVertices{i}{}$, we have
\[
\EE[|\heavyVertices{i}{}|] \geq \Nulowtruncated i \whlow{i}{0}^{1-\beta-\eta/2} = \Nulowtruncated i^{\Omega(\eta)},
\]
where the implicit constant is independent of $i$. Because the random variable $|\heavyVertices{i}{}|$ is Poisson distributed (cf.\ Fact~\ref{fact:Poisson}), we have $\Pr[|\heavyVertices{i}{}| \leq \EE[|\heavyVertices{i}{}|]/2] = e^{-\Omega(\EE[|\heavyVertices{i}{}|])}$. We apply a union bound over all such $i$. By~\eqref{eq:Nulowtruncated} we thus see that with probability at least $1-\sum_{i=0}^{\iCompleteSpace} \exp(-\Nulowtruncated{i}^{\Omega(\eta)})=1-o(1)$ we have $|\heavyVertices{i}{}| = \Nulowtruncated i^{\Omega(\eta)}$ for all $0\le i\le \iCompleteSpace$. Similarly, the upper bound follows since $0\ll\EE[|\heavyVertices{i}{}|]\le \EE[|\VS{}{\Blow{i}}{}{}|]=\Nulowtruncated{i}$.

{\it (b)} We first show that in the supercritical case for weak power-law weights, whp $|\heavyVertices{0}{\leq 1}| \geq k$. Let $v=(\x{v},\w{v})$ be a vertex in $\heavyVertices{0}{}$. In particular $\w v \ge \whlow{0}{} = \nu^{1/(\beta-1+\eta)} = \omega(\nu^{(1-\epsilon_0)/(\beta-1)})$, where the last step follows since the assumption on $\eta$ in Definition~\ref{def:BallsEtc} implies $\beta-1+\eta < (\beta -1)/(1-\epsilon_0)$. Then we claim that in round 1, $v$ will be active with high probability. We may restrict ourselves to the case $\w v \leq \Nuinit$, since larger weights make it only easier to become active. Consider a ball around $v$ with the property that every vertex of weight at least $\wmin$ (so \emph{all} vertices) in this ball have probability $\Omega(1)$ to connect to $v$. Observe that by condition~\eqref{eq:edgeprob1} and~\eqref{eq:edgeprob2} on the edge probabilities we may choose the ball to have volume $\Omega(\w v /n)$. (For $\alpha < \infty$ we may choose the volume to be exactly $\w v/n$, for $\alpha = \infty$ we may have to choose it smaller by at most a constant factor.) Since $\w v \leq \Nuinit$, at least a constant fraction of this ball lies in $\Binit$. Hence, $\EE[|\VS{}{}{\leq 0}{v}|] = \Omega(\infpar \w v) = \omega(1)$ since in the supercritical regime $\infpar \geq \nu^{-(1-\epsilon_0)/(\beta-1)}$. Since $|\VS{}{}{\leq 0}{v}|$ is a Poisson distributed random variable (cf.\ Fact~\ref{fact:Poisson}), $v$ becomes active with high probability. In particular, if we fix any $k$ vertices in $\heavyVertices{0}{}$ then whp all $k$ of them will be active in round $1$. This implies that whp $|\heavyVertices{0}{\le1}| \geq k$, as claimed. Thus we have unified both cases.

Next we show that $|\heavyVertices{0}{\le 1}| \geq k$ implies that whp at least an $\Omega(1)$ fraction of all vertices in $\heavyVertices{0}{}$ is active in round $2$. We denote the former event by $\eventB$ and the latter event by $\eventC$. To avoid re-exposing edges, we will treat some vertices in $\heavyVertices{0}{= 0}$ separately. More precisely, if $|\heavyVertices{0}{= 0}| \leq k$, we let $X := \heavyVertices{0}{= 0}$; otherwise we choose $X$ to be some fixed subset of $\heavyVertices{0}{= 0}$ of size $k$.  So formally $X = f(\heavyVertices{0}{= 0})$, where $f\colon\heavyVertices{0}{} \to \binom{\heavyVertices{0}{}}{\leq k}$ is a fixed function. 

Now for any $Y \subseteq \heavyVertices{0}{}$ with $|Y| \leq k$ we define $\eventD(Y)$ to be the event that an $\Omega(1)$ fraction of the vertices in $\heavyVertices{0}{}$ are adjacent to all $y \in Y$ (with a sufficiently small hidden constant). Note that for all $u,v \in \heavyVertices{0}{}$, the probability for the edge $\{u,v\}$ to appear is uniformly bounded from below by $\Omega(1)$, since $\whlow{0}{}^2/\Nuinit=\omega(1)$. Therefore, $\Pr[\eventD(Y)] = 1-o(1)$ uniformly for all $Y$. Moreover, observe that both $\eventD(Y)$ and $\eventB$ are monotone increasing with respect to the edge indicators. Therefore, by the FKG inequality, Theorem~6.3.2 in~\cite{alon2004probabilistic}, we have
\begin{equation*}
\Pr\left[\eventD(Y) \cond \eventB \right] \geq  \Pr\left[\eventD(Y) \right] = 1-o(1),
\end{equation*} 
uniformly for all $Y$. Consequently, by taking the expectation over $X$, we also have $\Pr\left[\eventD(X) \cond \eventB \right] = 1-o(1)$. Thus we obtain 
\begin{align*}
\Pr\left[\eventC \cond \eventB\right] & \geq \Pr\left[\eventC, \eventD(X)\cond \eventB\right]  = (1-o(1))\Pr\left[\eventC \cond \eventD(X),\eventB\right]
\end{align*}
Therefore it suffices to prove that $\Pr\left[\eventC \cond \eventD(X), \eventB\right] = 1-o(1)$. So assume that the events $\eventD(X)$, $\eventB$ hold. Note that $\eventD(X)$ and $\eventB$ are already determined after uncovering the positions and weights of all vertices, the set $\VS{}{}{\le 0}{}$ of initially active vertices, and the edges that go out of $\VS{}{}{\le 0}{}$. So fix any outcome of these random steps so that $\eventD(X)$ and $\eventB$ hold. Note in particular that we do not need to uncover any edges between vertices in $V\setminus \VS{}{}{\le 0}{}$.

Since $\eventB$ holds, the size of $\heavyVertices{0}{=1}$ is at least $k-|X|$, and we fix any subset $X' \subseteq \heavyVertices{0}{=1}$ of size exactly $k-|X|$. Moreover, since $\eventD(X)$ holds, there is a set $Z \subseteq \heavyVertices{0}{}$ of size $\Omega(|\heavyVertices{0}{}|)$ such that all $z\in Z$ are adjacent to all $x \in X$. For every vertex $z \in Z \setminus \heavyVertices{0}{\leq 1}$, we may still uncover the edges between $X'$ and $z$. Every such edge is present with at least constant probability since $\whlow{0}{}^2/\Nuinit=\omega(1)$. Therefore, with constant probability $z$ is adjacent to \emph{all} $x \in X'$. In this case we have $z \in \heavyVertices{0}{\leq 2}$, since it is adjacent to all vertices in $X \cup X'$, and $|X \cup X'| = k$. Of course, vertices in $z \in Z \cap \heavyVertices{0}{\leq 1}$ are also in $\heavyVertices{0}{\leq 2}$, trivially. Since the coin flips for different edges are independent of each other, it follows from a Chernoff bound that whp $|\heavyVertices{0}{\le 2}|=\Omega(|Z|)=\Omega(|\heavyVertices{0}{}|)$, as claimed. Note that the only edges within $\heavyVertices{0}{}$ that we needed to uncover were edges with endpoints in $X \cup X'$. Later on, in the final argument for proving that $\eventH{0}$ holds whp, we would need to exclude edges from these vertices from our considerations. However, since $X \cup X'$ is a negligible fraction of $\heavyVertices{0}{\leq 2}$, we may (and will) suppress this subtlety. 

We next show the analogous result in the supercritical case for strong power-law weights, i.e.\ whp at least an $\Omega(1)$ fraction of $\heavyVertices{0}{}$ is active in round $2$. Recall that $\infpar= \omega( \Nuinit^{-1/(\beta-1)})$ since we are supercritical. Let $\infpar'$ be a function with the properties $\infpar' = o(\infpar)$, $\infpar' = o(1/\whlow{0}{})=o(\Nuinit^{-1/(\beta-1+\eta)})$, and $\infpar' = \omega(\Nuinit^{-1/(\beta-1)})$, and let $w' := 1/\infpar'$. Note that $\EE[|\VS{\ge w'}{\Binit}{}{}|]=\Omega((w')^{1-\beta}\Nuinit) = \omega(1)$. As for weak power-laws, for a vertex $v$ of weight at least $w'$, we consider a ball $B$ around $v$ of volume $w'/n$. In the case $\alpha <\infty$, every vertex in $B$ has probability $\Omega(\min\{\wmin w'/w',1\}) = \Omega(1)$ to connect to $v$. In the case $\alpha = \infty$, we may achieve the same by shrinking the ball $B$ by at most a constant factor. In either case, the expected number of vertices in $\VS{}{}{\leq 0}{} \cap B$ is $\infpar n\V B  = \Omega(\infpar/ \infpar') = \omega(1)$. Hence, every vertex in $\VS{\geq w'}{}{}{}\cap \Binit$ is in $\VS{}{}{\le 1}{}$ whp. By Markov's inequality, whp the number of vertices in $\VS{\geq w'}{\Binit}{}{}$ that are \emph{not} in $\VS{}{}{\le 1}{}$ is $o(\EE[|\VS{\ge w'}{\Binit}{}{}|])$. In particular, whp $|\VS{\ge w'}{}{\leq 1}{} \cap \Binit| = \omega(1)$. Finally, for any two vertices $u \in \heavyVertices{0}{}$ and $v \in\VS{\ge w'}{\Binit}{}{}$, the probability that $u$ and $v$ are adjacent is $\Omega(1)$, since $\whlow{0}{} w'/\nu \geq \whlow{0}{}^2/\nu = \omega(1)$ with room to spare. The claim now follows as before by applying a Chernoff bound.

So we have shown that in all cases whp an $\Omega(1)$ fraction of all vertices in $\heavyVertices{0}{}$ is active in round $2$, so let us assume this. To show that $\eventH{0}$ holds whp, recall that any two vertices in $\heavyVertices{0}{}$ have probability $\Omega(1)$ to be connected. Therefore, the probability that a vertex in $\heavyVertices{0}{}$ does not become active in round $3$ is at most $\Pr[\Bin{|\heavyVertices{0}{}|}{\Omega(1)}< k] = \exp[-\Omega(|\heavyVertices{0}{}|)]=o(1/|\heavyVertices{0}{}|)$ by a Chernoff bound. Hence, by the union bound whp all vertices in $\heavyVertices{0}{}$ are active in round $3$. This proves that $\eventH{0}$ holds whp.

It remains to prove that the statement holds uniformly for all $1\le i\le \iCompleteSpace$. By (a) we may assume that for all such $i$ the set $|\heavyVertices{i}{}|$ satisfies $|\heavyVertices{i}{}|=\Nulowtruncated{i}^{\Omega(\eta)}$ and $|\heavyVertices{i}{}| = O(\Nulowtruncated i)$.

We claim that any two vertices $v_{i-1} \in \heavyVertices{i-1}{}$ and $v_i \in \heavyVertices{i}{}$ with fixed position and weight form an edge with probability $\Omega(1)$. Indeed, this follows immediately since their distance is at most $(\Nulowtruncated i/n)^{1/d}$, and hence 
\[
\frac{\w{v_{i-1}}\w{v_{i}}}{\|\x{v_i}-\x{v_{i-1}}\|^dn} \geq \frac{\whlow{i-1}{}\whlow{i}{}}{\Nulowtruncated i} = \left(\Nulowtruncated{i-1}\Nulowtruncated i^{2-\beta-\eta}\right)^{1/(\beta-1+\eta)} = \omega(1),
\]
by~\eqref{eq:Nulowtruncated} as $(\zeta-\eps)^{-1}-\beta+2-\eta=\Omega(\eps)-O(\eta)$.
Therefore, the number of edges from a vertex $v_i \in \heavyVertices{i}{}$ into $\heavyVertices{i-1}{}$ is lower bounded by a binomially distributed random variable $\Bin{|\heavyVertices{i-1}{}|}{\Omega(1)}$. By the Chernoff bound, the probability that $v_i$ has less than $k$ neighbours in $\heavyVertices{i-1}{}$ is at most $\exp[-\Omega(|\heavyVertices{i-1}{}|)]=\exp[-\Nulowtruncated{i-1}^{\Omega(\eta)}]$. Using~\eqref{eq:Nulowtruncated}, a union bound over all vertices in $\heavyVertices{i}{}$ shows that  with probability at least $1-\exp[-\Nulowtruncated{i-1}^{\Omega(\eta)}]$ every vertex in $\heavyVertices{i}{}$ has at least $k$ neighbours in $\heavyVertices{i-1}{}$. A union bound over all $1\le i\le \iCompleteSpace$ shows that whp the same is still true for all such $i$ simultaneously. Hence, by a simple induction, all the events $\eventH{i}$ occur, as required.\medskip

{\it (c)} We only give the proof in the case $\alpha < \infty$, and explain in the end the changes that are necessary for $\alpha = \infty$. For $\alpha < \infty$, we prove the statement for $C_0:=(8k)^{2d/(\eps^2(\beta-2))}$, $C_1:= 4^{-d/\eps}$ and $C_2:=[\eps(\beta-2)/2](\zeta-\eps)/(\beta-1+\eta)$, where we assume that $\eps > 0$ is sufficiently small. We use induction on $\ell$. If $\w v \geq \whlow{i}{0}$ then $\eventH{i}$ implies that $v \in \VS{}{}{\le i+3}{}$, so for $\ell =0$ there is nothing to show. So let $\ell\geq 1$. Before we start with the inductive step, we will first argue that we may assume 
\begin{equation}\label{eq:weightMin}
\Nulowtruncated{i}^{(\zeta-\eps)^{-\ell+1}}\ge C_0^{\beta-1+\eta}= (8k)^{2d(\beta-1+\eta)/(\eps^2(\beta-2))} .
\end{equation}
If \eqref{eq:weightMin} is not satisfied, then $\max\{\whlow{i}{{\ell-1}},C_0\} = \max\{\whlow{i}{\ell},C_0\} = C_0$, and thus the statements of (c) for $\ell$ and $\ell-1$ are about the same set of vertices, defined by $\x v \in \Blow{i}$ and $\w v \geq C_0$. Then the statement for $\ell$ is strictly weaker than the statement for $\ell-1$, since $\VS{}{}{\le i+3+\ell}{} \supseteq \VS{}{}{\le i+3+\ell-1}{}$, and since the claimed probability is decreasing in $\ell$. In particular, the inductive step is trivial if~\eqref{eq:weightMin} is not satisfied, so we may assume~\eqref{eq:weightMin} in the following. 

Let $v$ be a vertex with position $\x v \in \Blow i$ and with weight $\w v \geq \whlow{i}{\ell}$. We claim that every vertex in distance at most $r_\ell := (\Nulowtruncated i^{(\zeta-\eps)^{-\ell+1}}/n)^{1/d}$ with weight at least $\whlow{i}{\ell-1}$ has probability $\Omega(1)$ to connect to $v$. Indeed, this follows from 
\begin{align}\nonumber
\frac{\whlow{i}{\ell-1} \whlow{i}{\ell}}{r_\ell^d n} &\stackrel{\eqref{eq:Nulowtruncated}}{\ge} \Nulowtruncated i^{[2-\beta-\eta+(\zeta-\eps)^{-1}](\zeta-\eps)^{-\ell+1}/(\beta-1+\eta)}\\\nonumber
&\ge \Nulowtruncated{i}^{[\eps(\beta-2)/2](\zeta-\eps)^{-\ell+1}/(\beta-1+\eta) }\\
&\stackrel{\eqref{eq:weightMin}}{\ge} (8k)^{d/\eps}\ge 1.\label{eq:proofspeedlower}
\end{align}
Next let us abbreviate $W_{\ell-1}(v):=\VS{\ge \whlow{i}{\ell-1}}{\Blow{i}}{}{v}$. Since $\ell \geq 1$, we have $r_\ell \leq (\Nulowtruncated i/n)^{1/d}$, which is the diameter of the ball $\Blow i$. Hence, if we consider a ball around $v$ with radius $r_\ell$, then at least a $2^{-d}$ proportion of this ball falls into $\Blow i$. Therefore we have 
\begin{align*}
\EE[|W_{\ell-1}(v)|] &=\Omega(1) 2^{-d}r_\ell^d n\whlow{i}{\ell-1}^{1-\beta+\eta} \Nulowtruncated{i}^{[\eps(\beta-2)/2](\zeta-\eps)^{-\ell+1}/(\beta-1+\eta) }  \\
&\ge c2^{-d}\Nulowtruncated i^{\left[\eps (\beta-2)/2+2\eta)\right](\zeta-\eps)^{-\ell+1}/(\beta-1+\eta) }
\end{align*}
for some constant $c>0$ and any sufficiently large $n$. Furthermore, if the constant $\eps>0$ is  sufficiently small we obtain 
\begin{align*}
\EE[|W_{\ell-1}(v)|]&\ge 2^{-d/\eps}\Nulowtruncated i^{[\eps (\beta-2)/2](\zeta-\eps)^{-\ell+1}/(\beta-1+\eta) }\stackrel{\eqref{eq:weightMin}}{\ge} (4k)^{d/\eps}\ge 8k.
\end{align*}
Recall that $|W_{\ell-1}(v)|$ is a Poisson distributed random variable (cf.\ Fact~\ref{fact:Poisson}). A Poisson distributed random variable $X$ satisfies the Chernoff bound in the form $\Pr[X \le (1-\delta)\EE[X]] \leq (e^{-\delta}/(1-\delta)^{1-\delta})^{\EE[X]}$. For $\delta = 7/8$ we have $e^{-7/8}/(1/8)^{1/8} < e^{-1/2}$, and thus $\Pr[X \le \EE[X]/8] \leq e^{-\EE[X]/2}$. Therefore, 
\begin{align*}
\Pr[|W_{\ell-1}(v)| < k] &\le  \exp(-\EE[|W_{\ell-1}(v)|]/4)\cdot \exp(-\EE[|W_{\ell-1}(v)|]/4)\\
&\le \exp(-2k)\exp\left(- 4^{-d/\eps}\Nulowtruncated i^{[\eps (\beta-2)/2](\zeta-\eps)^{-\ell+1}/(\beta-1+\eta)}\right).
\end{align*}
So assume that $|W_{\ell-1}(v)| \geq k$, and pick any $k$ vertices $v_1,\ldots, v_k \in W_{\ell-1}(v)$. By induction hypothesis, we may assume (c) for $\ell-1$ and obtain
  \begin{align*}
  \Pr[v_1 \not\in \VS{}{}{\le i+\ell+1}{}] &\leq \exp\left[-4^{-d/\eps}\Nulowtruncated i^{[\eps (\beta-2)/2](\zeta-\eps)^{-\ell+2}/(\beta-1+\eta)}\right] \\
  &\le \exp\left[-2\cdot4^{-d/\eps}\Nulowtruncated i^{[\eps (\beta-2)/2](\zeta-\eps)^{-\ell+1}/(\beta-1+\eta)}\right]\\
  &\stackrel{\eqref{eq:weightMin}}{\le} \exp(-k^d)\exp\left[-4^{-d/\eps}\Nulowtruncated i^{[\eps (\beta-2)/2](\zeta-\eps)^{-\ell+1}/(\beta-1+\eta)}\right],
  \end{align*}
where the second inequality holds since $\zeta-\eps\ge 1+\eps$ for any sufficiently small $\eps>0$.
   The same bound applies to the other $v_j$. By a simple union bound,
\[
\Pr[v_1,\ldots,v_k \in  \VS{}{}{\le i+\ell+1}{}] \geq 1- k\exp(-k^d)\exp\left[-4^{-d/\eps}\Nulowtruncated i^{[\eps (\beta-2)/2]\frac{(\zeta-\eps)^{-\ell+1}}{\beta-1+\eta}}\right].
\]
Hence,
\begin{align*}
\Pr[v \not\in  \VS{}{}{\le i+\ell+2}{}] & \leq \Pr[|W_{\ell-1}(v)| < k] + \Pr[\{v_1,\ldots,v_k\} \not \subseteq  \VS{}{}{\le i+\ell+1}{}] \\
& \leq \left(\exp(-2k) +k\exp(-k^d)\right)\exp\left[-4^{-d/\eps}\Nulowtruncated i^{[\eps (\beta-2)/2]\frac{(\zeta-\eps)^{-\ell+1}}{\beta-1+\eta}}\right]\\
&\le \exp\left[-4^{-d/\eps}\Nulowtruncated i^{[\eps (\beta-2)/2]\frac{(\zeta-\eps)^{-\ell+1}}{\beta-1+\eta}}\right]\\
&= \exp\left[-C_1\Nulowtruncated{i}^{C_2(\zeta-\eps)^{-\ell}}\right],
\end{align*}
as required.

For $\alpha = \infty$, Equation~\eqref{eq:proofspeedlower} does not imply that the corresponding vertices connect with probability $\Omega(1)$, but it suffices to decrease $r_\ell$ by at most a constant factor to ensure this property. This can be compensated by changing (for example) $C_1$. We omit the details.
\end{proof}

\subsection{Upper bound on the speed}\label{sec:proofupper}

In this section we show upper bounds for the probability that a vertex in a specific region and with a specific weight will be active in some round (Theorem~\ref{thm:speedupper} (f)). To bound this probability, we need to condition on the event that the process does not infect too many vertices in certain regions and rounds, which we show to hold with high probability in Theorem~\ref{thm:speedupper}~(e). Recall from Definition~\ref{def:BallsEtc} that $\Bupp i$ is the ball centered around $0$ of volume $\V{}(\Bupp{i}) =\min\{\Nuupp{i}(\eps)/n,1\}$. Then we define the following families of events:
\begin{itemize}
\item For all integers $i\geq 0$ 
\[
\eventE{i}:=\{\VS{}{(\Space\setminus \Bupp i)}{\le i}{}=\emptyset\},
\]
in other words, no vertex outside of $\Bupp i$ is activated by time $i$;
\item For all integers $\ell \geq 0$, all $\eps,\eta,h>0$ and $w \geq \wmin$ let
\[
\eventF{}{\ell, w}:=\eventF{\eps,\eta,h}{\ell, w}:=\left\{|\VS{\ge w}{2^\ell \Bupp{0}}{\le \ell}{}|\le h^{\ell}w^{2-\beta+\eta}\Nuupp{0}^{1-(\zeta+\eps)^{-\ell}(\beta-1)^{-1}}\right\},
\]
and 
\[
\eventF{}{\ell}:=\eventF{\eps,\eta,h}{\ell}:=\bigcap_{w'\ge \wmin}\eventF{}{\ell,w'},
\]
i.e.\ the number of vertices in $2^{\ell}\Bupp{0}$ activated by time $\ell$ is not ``too large'';
\item For all integers $j\ge 0$ and all $\eps,\eta,h>0$ set
\[
\eventG{}{j}:=\eventG{\eps,\eta,h}{j}:=\bigcap_{j'=0}^{j}(\eventE{j'}\cap\eventF{}{j'}),
\] 
in other words, $\eventG{}{j}$ summarizes that all ``good'' events up to time $j$ hold.
\end{itemize}  

With the following Theorem~\ref{thm:speedupper}, we will show that whp the events $\eventG{}{j}$ hold for all $j$. We will use induction, so we will need to give bounds on $\Pr[\eventG{}{0}]$ (part (a) and (b)) and on the conditional probabilities $\Pr[\neg \eventG{}{j} \mid \eventG{}{j-1}]$ (part (c) and (d)). If these probabilities sum up to $o(1)$, this shows that whp all $\eventG{}{j}$ hold. However, for the induction to go through we need to use a stronger statement, which is formulated in part (f). It gives for every vertex outside of $2^{\ell+1}\Bupp{i-1}$ an upper bound on the probability to be active in round $i+\ell$. 

Even though it is not relevant for the proof, let us still discuss the formula in part f. It depends on two parameters $i$ and $\ell$. The parameter $i$ determines essentially the region that we consider. Specifically, it applies to the annulus $\Bupp{i-1} \setminus 2^{\ell+1}\Bupp{i-1}$. Note that since the size of these annuli is doubly exponentially growing in $i$, the factor $2^{\ell+1}$ is rather negligible in typical cases. For $\ell = 0$, vertices are generally very unlikely to be active in round $i+\ell$, since the probability is at most $\w v \Nuupp{i}^{-\Omega(1)}$, where the second factor is doubly exponentially small in $i$. In particular, vertices of constant weight are highly unlikely to be active. However, there are (very few) vertices of weight $\approx \Nuupp{i}^{1/(\beta-1)}$ in the annulus, and for these vertices the upper bound is larger than one. So these vertices might be active. Now, for the same $i$, let us start increasing $\ell$. Then the second factor increases towards one with doubly exponential speed in $\ell$. In particular, the right hand side becomes at least $1$ for vertices of less extreme weights. For $\ell = i$, the second factor is $\nu^{-O(1)}$. Even though $\nu = \omega(1)$, it might be helpful to think about it as ``almost constant'', since it is much smaller than $\Nuupp{i}$. So at this point, even typical vertices of ``almost constant'' weights might get active. However, the theorem is not restricted to the case $\ell \leq i$, and the second factor continues to shrink beyond this point. 

Finally, we remark that up to a constant offset in the exponent ($-2$ instead of $+3$) and some technical changes ($\Bupp{i}$ instead of $\Blow{i}$), Theorem~\ref{thm:speedupper} is a rather direct complement to Theorem~\ref{thm:speedlower}, which gives an analogous lower bound to Theorem~\ref{thm:speedupper} (f). The more technical nature of Theorem~\ref{thm:speedupper} is due to the fact that the negative statement is intrinsically harder to prove. In order to show that a vertex is active, it suffices to find a single chain by which it becomes activated. In order to show that a vertex is inactive, we need to rule out \emph{all} possible chains of activation.

\begin{thm}\label{thm:speedupper}
Let $\zeta$, $\eps$, and $\eta$ be given as in Definition~\ref{def:BallsEtc}, let $h=h(n) $ be a function satisfying $h(n)= \omega(1)$, $h(n) = o(\log n)$, and $h(n) = \Nuinit^{o(1)}$. If additionally we have $\alpha > \beta-1$, then for sufficiently large $n$ we have
\begin{enumerate}[(a)]
\item $\eventE{0}$ is always satisfied;
\item $\Pr[\eventF{}{0}] \geq 1-O(h^{-1})$;
\item There exists a constant $C_\cE>0$ such that for all $i\ge 1$ we have
\[
\Pr\left[\eventE{i} \cond\eventG{}{i-1}\right] \geq 1-h^{-C_\cE i};
\]
\item There exists a constant $C_\cF>0$ such that for all $\ell\ge 1$ we have
\[
\Pr\left[\eventF{}{\ell} \cond \eventG{}{\ell-1}\right] \geq 1-h^{-C_\cF \ell};
\]
\item Whp, the events $\eventG{}{j}$ occurs for all $j\geq 0$;
\item For all $i \geq 1$ and $\ell \geq 0$, and for every fixed vertex $v=(\x{v},\w{v})$ such that $\x v \in \Space\setminus 2^{\ell+1}\Bupp{i-1}$ and $\w v \geq \wmin$ we have 
\[ \Pr\left[v \in \VS{}{}{\le i+\ell}{} \cond\eventG{}{i+\ell-1}\right] \leq \w v 2^{\ell d}\Nuupp{i}^{-(\zeta+\eps)^{-\ell-2}/(\beta-1)}.\]
\end{enumerate}
\end{thm}
\begin{proof}
First note that all statements only become easier if the edge probabilities are decreased. Hence, by Observation~\ref{obser:infinitealpha} we may restrict ourselves to the case $\alpha< \infty$, since this case dominates the case $\alpha = \infty$. 

To prove (c), (d), and (f), we will use induction on $i+\ell$, where we set $\ell=0$ and $i=0$ in (c) and (d), respectively. In particular, in order to prove (c) and (d) for $i,\ell$, we will assume statement (f) for $i', \ell'$ as long as $i'+\ell' \leq i+\ell-1$. Throughout the proof, we will assume that $n$ is sufficiently large, without mentioning this every time; for example, for the growing functions $h, \Nuupp 0 = \omega(1)$, we will use that $h$ and $\Nuupp 0$ are larger than any fixed constant (in particular $\Nuupp 0 \geq 1$) without further comment.\medskip

{\it (a)} This statement is trivial since only vertices in $\Binit \subseteq \Bupp 0$ are active at time $0$.\medskip

{\it (b)} Fix a weight $w\geq \wmin$ and note that 
\[
|\VS{\ge w}{\Bupp{0}}{\le 0}{}|\le |\VS{\ge w}{\Binit}{}{}|
\]
since initially activation only occurs within $\Binit$. Furthermore, the right-hand side is a Poisson distributed random variable (cf. Fact~\ref{fact:Poisson}) and we have 
\begin{align}\label{eq:uppperi0ell01}
\EE[|\VS{\ge w}{\Binit}{}{}|] & = O(\Nuinit w^{1-\beta+\eta}) = O(w^{2-\beta} \Nuupp{0}^{\frac{\beta-2}{\beta-1} -\Omega(\eps)} w^{-1+\eta}) = O(w^{2-\beta} \Nuupp{0}^{\frac{\beta-2}{\beta-1}-\Omega(\eps)}), 
\end{align}
where in the last step we used that $w^{-1+\eta} = O(1)$. Now, let $\bar w$ be the weight that satisfies $\bar w^{2-\beta}\Nuupp{0}^{(\beta-2)/(\beta-1)} = h$. Then by Markov's inequality, $ |\VS{\ge \bar{w}}{\Bupp{0}}{\le 0}{}|= 0$ with probability $1-O(\Nuupp{0}^{-\Omega(\eps)}h) = 1-O(h^{-1})$ since $h = \Nuupp{0}^{o(1)}$. Note that this implies (b) for all $w \geq \bar w$.

For smaller $w$, observe in \eqref{eq:uppperi0ell01} that $\Nuupp{0}^{-\Omega(\eps)}$ dominates every $O(1)$-term for sufficiently large $n$. Let $\eventFProof{0,w}$ be the event that $\left|\VS{\ge w}{\Binit}{}{}\right| \leq (2w)^{2-\beta} \Nuupp{0}^{(\beta-2)/(\beta-1)}$ and note that
\begin{align}\label{eq:upperi0ell0}
\Pr[\eventFProof{0,w}] = 1- \exp\left[-\Omega(1)(2w)^{2-\beta} \Nuupp{0}^{(\beta-2)/(\beta-1)}\right]
\end{align}
by~\eqref{eq:uppperi0ell01} and a Chernoff bound. The exponent $(\beta-2)/(\beta-1)$ in~\eqref{eq:upperi0ell0} equals the exponent $1-(\beta-1)^{-1}$ of $\Nuupp 0$ in $\mathcal F(0,w)$. Hence, if $\eventFProof{0,w'}$ holds for some $w' \geq \wmin$, then $\eventF{}{0,w}$ holds for all $w \in [w', 2w']$. Therefore, it remains to prove $\eventFProof{0,2^s\wmin}$ for all $s\in\{0,\dots, \log_2 (\bar{w}/\wmin)-1\}$. A union bound over all such $s$ using~\eqref{eq:upperi0ell0} shows that all these events hold with probability $1-\exp\{-\Omega(h)\} = 1-O(h^{-1})$. This concludes the proof of (b).\medskip

{\it (c)} Aiming for an error bound which is uniform for all $i\ge1$, in the following arguments we provide the dependence on the parameter $i$ explicitly meaning that all hidden constants of the Landau notation are independent of $i$. This is done purely for notational convenience.

We will show that with sufficiently large probability, no vertex in $\Space\setminus \Bupp i$ has a neighbour in $\Bupp{i-1}$. This will imply the statement, since we assumed $\eventG{}{i-1}$, which means in particular that all active vertices in round $i-1$ are in $\Bupp{i-1}$. 

By Lemma~\ref{lem:nolargeweights}, with probability $1-(n\V{}(\Bupp{i}))^{-\Omega(\eta)} \geq 1-h^{-\Omega(i)}$ there is no vertex $v=(\x{v},\w{v})$ such that $\x{v}\in\Space\setminus \Bupp i$ and $\w v \geq ((2\|\x v\|)^d n)^{1/(\beta-1-\eta)}$. So let $v = (\x v, \w v)$ be a vertex satisfying $\x{v}\in\Space\setminus \Bupp i$ and $\w v \leq ((2\|\x v\|)^d n)^{1/(\beta-1-\eta)}$, and note in particular that $\|\x{v}\|\ge (\Nuupp{i}/n)^{1/d}/2 \ge 2(\Nuupp{i-1}/n)^{1/d}$ since $\Nuupp{i} = \omega(\Nuupp{i-1})$. Hence, due to Markov's inequality, the probability of $v$ having a neighbour in $\Bupp{i-1}$ is at most 
\[
\EE[|\VS{}{\Bupp{i-1}}{}{v}|] =O(1) \Nuupp{i-1}\left(\frac{\w v}{\|\x v\|^dn}\right)^{\beta-1-\eta}
\] 
by Lemma~\ref{lem:neighExp}. We call any such vertex $v=(\x{v},\w{v})$ \emph{bad}, i.e.\ $v$ is bad if it satisfies $\x{v}\in \Space\setminus \Bupp i$ and $\w v \leq ((2\|\x v\|)^d n)^{1/(\beta-1-\eta)}$, and if $v$ has at least one neighbour in $\Bupp{i-1}$. Integrating over $\r{v}:=2\|\x{v}\|$ and using Lemma~\ref{lem:integral}, we can thus bound the expected number $n_{\text{bad}}$ of bad vertices by
\begin{align*}
\EE[n_\text{bad}] & =O(1) \int_{(\Nuupp{i}/n)^{1/d}}^{\infty} \hspace{-.15cm}\r{v}^{d-1}n \int_{\wmin}^{(\r{v}^d n)^{1/(\beta-1-\eta)}}\hspace{-.15cm} \w{v}^{1-\beta+2\eta} \frac{d}{d\w{v}}\Nuupp{i-1}(\r{v}^dn)^{1-\beta+\eta}\w v^{\beta-1-\eta}d\w{v} d\r{v}.
\end{align*}
Thus, since $(\r{v}^d n)^{\eta/(\beta-1-\eta)}\le (\r{v}^d n)^{\eta}$, we have 
\begin{align*}
 \EE[n_\text{bad}] &=O(1)\int_{(\Nuupp{i}/n)^{1/d}}^{\infty} \r{v}^{-1}(\r{v}^dn)^{2-\beta+2\eta}\Nuupp{i-1}d\r{v} =O(1)\Nuupp i^{2-\beta+2\eta}\Nuupp{i-1}= \Nuupp i^{2\eta - \Omega(\eps)}, 
\end{align*}
where the last step holds since $\Nuupp{i}^{2-\beta} = \Nuupp{i}^{-1/\zeta}$ and $\Nuupp{i-1} = \Nuupp{i}^{1/(\zeta+\eps)}$ by definition of $\zeta = 1/(\beta-2)$ and of $\Nuupp{i}$. Thus by Markov's inequality, with probability at least $1-\Nuupp i^{2\eta-\Omega(\eps)} \geq 1-h^{-\Omega(i)}$ there is no such vertex. Statement (c) follows.\medskip

{\it (d)} Aiming for an error bound which is uniform for all $\ell\ge1$, in the following arguments we provide the dependence on the parameter $\ell$ explicitly meaning that all hidden constants of the Landau notation are independent $\ell$. This is done purely for notational convenience.

We distinguish two cases. For $w \geq \Nuupp{0}^{(\zeta+\eps)^{-\ell}(1+3\eta)/(\beta-1)}$, we consider the upper bound 
\[
|\VS{\ge w}{2^{\ell}\Bupp{0}}{\le \ell}{}|\le |\VS{\ge w}{2^{\ell}\Bupp{0}}{}{}|.
\]
Since we have
\[
\EE[|\VS{\ge w}{2^\ell \Bupp{0}}{}{}|]  \le c_2  w^{1-\beta+\eta} 2^{\ell d}\Nuupp 0\le c_2 w^{2-\beta}2^{\ell d}\Nuupp 0^{1-(\zeta+\eps)^{-\ell}/(\beta-1)},
\]
where $c_2$ is the constant from~\eqref{eq:powerlaw}. Furthermore, since the random variable $|\VS{\ge w}{2^\ell \Bupp{0}}{}{}|$ is Poisson distributed (cf.\ Fact~\ref{fact:Poisson}), we obtain
\begin{align}\label{eq:upperi0ell11}
&\Pr \left[|\VS{\ge w}{2^\ell \Bupp{0}}{}{}| \leq 2^{\beta-1}c_2(2w)^{2-\beta}2^{\ell d}\Nuupp 0^{1-(\zeta+\eps)^{-\ell}/(\beta-1)}\right] \\ \nonumber
&\hspace{5cm}= 1- \exp\left[-\Omega(1)w^{2-\beta}2^{\ell d}\Nuupp 0^{1-(\zeta+\eps)^{-\ell}/(\beta-1)}\right]
\end{align}
by a Chernoff bound. Now note that conditioning on the high probability event $\eventG{}{\ell-1}$ can only increase the error-probabilities by at most a multiplicative factor $1/\Pr[\eventG{}{\ell-1}] \leq 2$. Moreover, similarly as in the proof of (b), it suffices to establish the bound in~\eqref{eq:upperi0ell11} only for weights of the form $2^s\wmin$ for $s\in\{0,\dots,\log_2(\bar w_\ell/\wmin)-1\}$, where $\bar{w}_\ell$ is defined by $\bar{w}_\ell^{2-\beta}2^{\ell d}\Nuupp 0^{1-(\zeta+\eps)^{-\ell}/(\beta-1)} = h^\ell$. A union bound over all such $s$ proves that $\eventF{}{\ell,w}$ holds for all $w \geq \Nuupp{0}^{(\zeta+\eps)^{-\ell}(1+3\eta)/(\beta-1)}$ with probability $1-2\exp(-\Omega(h^{\ell})) = 1- h^{-\Omega(\ell)}$.

For the second case assume that $w \leq \Nuupp{0}^{(\zeta+\eps)^{-\ell}(1+3\eta)/(\beta-1)}$. We claim that it suffices to restrict ourselves to vertices of weight at most $\hat w:=\Nuupp{0}^{1/(\beta-1)}$. More precisely, we will show that with probability at least $1-h^{-\Omega(\ell)}$, for all $w \leq \Nuupp{0}^{(\zeta+\eps)^{-\ell}(1+3\eta)/(\beta-1)}$ we have 
\begin{equation}\label{eq:ThmUppdcase2}
|U(w)|\le h^{\ell}w^{2-\beta+\eta}\Nuupp{0}^{1-(\zeta+\eps)^{-\ell}/(\beta-1)},
\end{equation}
where $U(w):=\VS{\in [w,\hat{w}]}{2^\ell\Bupp{0}}{\le \ell}{}$. Note that this suffices since by the first case there are sufficiently few other vertices active: we have seen that with probability at least $1-h^{-\Omega(\ell)}$ for any weight $w\le  \Nuupp{0}^{(\zeta+\eps)^{-\ell}(\beta-1)^{-1}(1+3\eta)} =o(\hat w)$ (for $\ell\ge1$) we have
\begin{align*}
|\VS{\ge \hat{w}}{2^\ell \Bupp{0}}{\le \ell}{}|\le h^{\ell}\hat{w}^{2-\beta+\eta}\Nuupp{0}^{1-(\zeta+\eps)^{-\ell}/(\beta-1)}\le \frac{1}{2}h^{\ell}w^{2-\beta+\eta}\Nuupp{0}^{1-(\zeta+\eps)^{-\ell}/(\beta-1)}.
\end{align*}
  
Thus we want to bound  $\EE[|U(w)|]$ by calculating the expected number of edges having one endpoint in $\VS{}{}{\le \ell-1}{}$ and the other in $\VS{\in[w,\hat{w}]}{2^\ell\Bupp{0}}{}{}$, i.e.\ we set 
\[
\edgesSet(w):=E\left(\VS{}{}{\le \ell-1}{},\VS{\in[w,\hat{w}]}{2^\ell\Bupp{0}}{}{}\right).
\]
Furthermore we observe that each edge in $\edgesSet(w)$ is also contained in at least one of the following two edge-sets: 
\[
\edgesIn(w):=E\left(\VS{}{2^{\ell+1}\Bupp{0}}{\le \ell-1}{},\VS{\in[w,\hat{w}]}{}{}{}\right),
\]
and
\[
\edgesOut(w):=E\left(\VS{}{(\Space\setminus 2^{\ell+1}\Bupp{0})}{}{},\VS{\in[w,\hat{w}]}{2^\ell\Bupp{0}}{}{}\right);
\]
then we have $\edgesSet(w)\subseteq \edgesIn(w)\cup \edgesOut(w)$. It will turn out that the bound on $|U(w)|\le|\edgesIn(w)|+|\edgesOut(w)|$ obtained this way strong enough to prove~\eqref{eq:ThmUppdcase2}.

We start by estimating $|\edgesIn(w)|$. As a preparation, we first bound $|\VS{\ge w'}{2^{\ell+1}\Bupp{0}}{\le\ell-1}{}|$, i.e.\ the number of vertices in a slightly larger region that were already active in the previous round. Since we assumed that $\eventF{}{\ell-1}$ holds, for those vertices which are also contained in the slightly smaller region $2^{\ell-1}\Bupp{0}$ we already know that
\begin{equation}\label{eq:innerball}
|\VS{\ge w'}{2^{\ell-1}\Bupp{0}}{\le\ell-1}{}|\le 2^{(\ell-1)d}(w')^{2-\beta+\eta}\Nuupp{0}^{1-(\zeta+\eps)^{-\ell+1}/(\beta-1)}.
\end{equation}
Now if $\ell = 1$, then no other vertices were active in round $\ell-1=0$ by (a). For $\ell \geq 2$, we need to examine the remaining region $2^{\ell+1}\Bupp 0 \setminus 2^{\ell-1}\Bupp 0$. Note that this area is contained in $2^{\ell}\Bupp 1$. Hence, we may apply (f) with $i'=1$ and $\ell' = \ell-2$, and thus
\begin{align}\nonumber
\EE[|\VS{\ge w'}{(2^{\ell+1}\Bupp{0}\setminus 2^{\ell-1}\Bupp{0})}{\le \ell-1}{}|] &= O(1)2^{(\ell+1)d}\Nuupp{0}(w')^{1-\beta+\eta}w'2^{(\ell-2)d}\Nuupp{1}^{-(\zeta+\eps)^{-\ell}/(\beta-1)}\\
& =O(1) h^{\ell} (w')^{2-\beta+\eta}\Nuupp 0^{1-(\zeta+\eps)^{-\ell+1}/(\beta-1)}.\label{eq:outerring}
\end{align}
Combining equations~\eqref{eq:innerball}~and~\eqref{eq:outerring} we obtain
\begin{align}\label{eq:upperd1}
\EE[|\VS{\ge w'}{2^{\ell+1}\Bupp{0}}{\le\ell-1}{}|] =O(1) \min\left\{h^{\ell}(w')^{2-\beta+\eta}\Nuupp 0^{1-\frac{(\zeta+\eps)^{-\ell+1}}{\beta-1}}, 2^{(\ell+1)d}\Nuupp 0 (w')^{1-\beta+\eta}\right\},
\end{align}
where the second term arises from dropping the condition on being active in round $i-1$. Now we denote by $\tilde w = \Theta(1)\Nuupp 0^{(\zeta+\eps)^{-\ell+1}(\beta-1)^{-1}}2^{(\ell+1)d}h^{-\ell}$ the weight for which the two expressions in~\eqref{eq:upperd1} coincide.  Recall that for any vertex $u=(\x{u},\w{u})$ of fixed weight (and independently of its position) we have $\EE[|\VS{}{}{}{u}|]=\Theta(\w{u})$ by Lemma~\ref{lem:marginalProb}. Moreover, by Lemma~\ref{lem:weightofneighbours}, the probability $q(w)$ for a random neighbour of $u$ to have weight at least $w$ is $O(w^{2-\beta+\eta/2})$, independently of $u$. Therefore we have 
\begin{align*}
&\EE[|\edgesIn(w)|]=O(q(w))\sum_{u\in \VS{\ge 0}{2^{\ell+1}\Bupp{0}}{\le \ell-1}{}}\EE[|\VS{}{}{}{u}|] \\
& \stackrel{\text{Lemma}~\ref{lem:integral}}{=}O(q(w))\int_{0}^{\infty}  \min\left\{h^{\ell}\w{u}^{2-\beta+\eta}\Nuupp 0^{1-\frac{(\zeta+\eps)^{-\ell+1}}{\beta-1}}, 2^{(\ell+1)d}\Nuupp 0 \w{u}^{1-\beta+\eta}\right\} \left(\frac{d}{d\w{u}} \w{u}\right)d\w{u}.
\end{align*}
Using Lemma~\ref{lem:evalintegral} with $\tilde{w} := 2^{(\ell+1)d}h^{-\ell}\Nuupp 0^{(\zeta+\eps)^{-\ell+1}/(\beta-1)}$ we obtain
\begin{align} \nonumber
\EE[|\edgesIn(w)|]& = O(1)w^{2-\beta+\eta/2}2^{(\ell+1)d}\Nuupp 0 \tilde{w}^{2-\beta+\eta}\\\nonumber
& = O(1)w^{2-\beta+\eta/2}2^{(\ell+1)d(3-\beta+\eta)}h^{\ell(\beta-2-\eta)}\Nuupp 0^{1-(\zeta+\eps)^{-\ell+1}(\beta-2-\eta)/(\beta-1)}\\\nonumber
& \leq w^{2-\beta+\eta/2}h^{\ell(\beta-2-2\eta)}\Nuupp 0^{1-(\zeta+\eps)^{-\ell}(1+\Omega(\eps)-O(\eta))/(\beta-1)} \\
&\leq w^{2-\beta+\eta/2}h^{\ell(\beta-2-2\eta)}\Nuupp 0^{1-(\zeta+\eps)^{-\ell}/(\beta-1)}.\label{eq:upperd2}
\end{align}

Next we turn to the edges in $\edgesOut(w)$. If $\ell=1$, then $\VS{}{(\Space\setminus2^{\ell+1}\Bupp{0})}{\le \ell-1}{}$ is empty by (a), hence also $\edgesOut(w)$. So assume $\ell \geq 2$. Fix a vertex $v=(\x{v},\w{v})$ such that $\x{v}\in 2^{\ell}\Bupp{0}$ and $\w{v}\le \hat{w}$ and denote by $\edgesOut(w,v):=\{e\in \edgesOut(w)\mid v\in e\}$ the subset of $\edgesOut(w)$ consisting of all edges incident with $v$. Now note that every edge in $\edgesOut(w,v)$ must bridge a distance of at least $\tilde{r}_{\ell} := 2^{\ell-1} (\Nuupp 0/n)^{1/d}$ and hence Lemma~\ref{lem:integral} and Lemma~\ref{lem:evalintegral} (with $\tilde w = r^d n/{\w v}$) imply
\begin{align*}
\EE\left[\left|\edgesOut(w,v)\right|\right] & = O(1)\int_{\tilde r_\ell}^{\infty}r^{d-1}n \int_{0}^\infty w_*^{1-\beta+\eta}\frac{d}{dw_*} \min\left\{\left(\frac{w_*\w v}{r^dn}\right)^\alpha,1\right\} dw_* dr \\
& = O(1)\int_{\tilde{r}_\ell}^{\infty}r^{d-1}n \left(\frac{r^d n}{\w v}\right)^{1-\beta+\eta}dr 
 = O(1)(\tilde{r}_\ell^dn)^{2-\beta+\eta}\w v^{\beta-1-\eta} \\
& = O(1)\left(\frac{2^{d\ell}\Nuupp 0}{\w v}\right)^{2-\beta+\eta}\w v
= O(1) \Nuupp 0^{-\frac{(\zeta+\eps)^{-\ell}}{\beta-1}} \w v,
\end{align*}
where the last step follows from $(\beta-2)(\beta-2-\eta)(\zeta+\eps)^2\ge 1$ since we assumed $\w{v}\le \hat{w}=\Nuupp{0}^{1/(\beta-1)}$ and $\ell\ge 2$. Hence, 
\begin{align*}
\EE[|\edgesOut(w)|] & =O(1)\Nuupp 0\int_{w}^{\infty} \w{v}^{1-\beta+\eta/2}\frac{d}{d\w{v}}\left(\Nuupp 0^{-(\zeta+\eps)^{-\ell}/(\beta-1)}\w{v}\right) d\w{v} \\
&= O(1)w^{2-\beta+\eta/2} \Nuupp 0^{1-(\zeta+\eps)^{-\ell}/(\beta-1)}.
\end{align*}
Together with~\eqref{eq:upperd2}, this shows that the expected number of vertices in $U(w)$ is also bounded by
\[
\EE[|U(w)|]\le 2w^{2-\beta+\eta/2}h^{\ell(\beta-2-2\eta)}\Nuupp 0^{1-(\zeta+\eps)^{-\ell}/(\beta-1)},
\]
and therefore, by Markov's inequality, we have
\[
\Pr\left[|U(w)| \geq \frac{1}{2}(2w)^{2-\beta+\eta} h^{\ell}\Nuupp 0^{1-(\zeta+\eps)^{-\ell}/(\beta-1)}\right] = w^{-\eta/2}h^{-\Omega(\ell)}.
\]
As in the proof of (b), we apply a union bound over all weights of the form $2^s\wmin$ parametrised by $s\in\{0,\dots,\log_2(\hat{w}/\wmin)-1\}$, and find that with probability $1-h^{-\Omega(\ell)}$, for all $w\geq \wmin$ we have 
\begin{align}\label{eq:upperd3}
|U(w)| \leq \frac12  w^{2-\beta+\eta} h^{\ell}\Nuupp 0^{1-(\zeta+\eps)^{-\ell}/(\beta-1)}, 
\end{align}
concluding the proof of (d).\medskip

{\it (e)} It follows immediately from (a)--(d) that $\Pr[\bigcap_{j' \geq 0}\eventE{j'}] = 1-h^{-\Omega(1)}$ and $\Pr[\bigcap_{j'\geq 0}\eventF{}{j'}] = 1-h^{-\Omega(1)}$, and (e) follows by a simple union bound.\medskip

{\it (f)} Fix a vertex $v=(\x{v},\w{v})$ such that $\x v \in \Space\setminus 2^{\ell+1}\Bupp{i-1}$. The statement (f) is trivial if $\w v \geq \Nuupp{i}^{(\zeta+\eps)^{-\ell-2}/(\beta-1)}$, so assume the contrary. We first estimate the number of neighbours in $2^{\ell}\Bupp{i-1}$. Observe that every such vertex has distance at least $\tilde {r}_{i-1,\ell} := 2^{\ell-1}(\Nuupp{i-1}/n)^{1/d}$ from $v$. Therefore, using $\w v \leq \Nuupp{i}^{(\zeta+\eps)^{-\ell-2}/(\beta-1)} \leq \Nuupp{i}^{(\zeta+\eps)^{-2}/(\beta-1)}$, we obtain
\begin{align*}
\EE[|\VS{}{2^{\ell}\Bupp{i-1}}{}{v}|] & = O(1)2^{\ell d} \Nuupp{i-1}\int_{0}^\infty w^{1-\beta+\eta} \frac{d}{dw} \min\left\{\left(\frac{w \w v}{2^{(\ell-1) d}\Nuupp{i-1}}\right)^\alpha,1\right\} dw \nonumber\\
&\stackrel{\text{Lemma}~\ref{lem:evalintegral}}{=} O(1)\w v \left(\frac{2^{\ell d}\Nuupp{i-1}}{\w v}\right)^{2-\beta+\eta} \nonumber\\
& = O(1)\w v2^{\ell d} \left(\Nuupp{i}^{(\zeta+\eps)^{-1}-(\zeta+\eps)^{-2}/(\beta-1)}\right)^{2-\beta+\eta},
\end{align*}
where Lemma~\ref{lem:evalintegral} with $\tilde w = 2^{(\ell-1)d}\Nuupp{i-1}/{\w v}$ is applicable because $\alpha > \beta-1$. Since $(2-\beta-\eta)[(\zeta+\eps)^{-1}-(\zeta+\eps)^{-2}/(\beta-1)]=-(\zeta+\eps)^{-2}(1+\Omega(\eps))/(\beta-1)$ we deduce
\begin{align}
\EE[|\VS{}{2^{\ell}\Bupp{i-1}}{}{v}|] & = O(1)\w v 2^{\ell d}\Nuupp{i}^{-(\zeta+\eps)^{-2}/(\beta-1)-\Omega(\eps)}\le \frac{1}{2}\w{v} 2^{\ell d}\Nuupp{i}^{-(\zeta+\eps)^{-\ell-2}/(\beta-1)}.\label{eq:uppere1}
\end{align}
In the case $\ell = 0$, this already proves the assertion since in round $i-1$ no vertex outside of $\Bupp{i-1}$ is active by $\eventE{i-1}$, and thus $\Pr[v \in \VS{}{}{=i}{}] \leq \EE[|N(v) \cap 2^{\ell}\Bupp{i-1}|]$ by Markov's inequality. 

So assume $\ell \geq 1$. Set $U_*:=|\VS{}{(\Space\setminus 2^{\ell}\Bupp{i-1})}{\le i+\ell-1}{v}|$. In this case, we can use the induction hypothesis of statement (f) for $i'=i$ and $\ell'=\ell-1$ to estimate  
\begin{align*}
&\EE\left[\left|U_*\right|\right]= \\& O(1)\int_0^{\infty} \hspace{-.2cm}r^{d-1}n \int_0^{\infty} w^{1-\beta+\eta} \frac{d}{dw} \left(\min\left\{w 2^{(\ell-1)d}\Nuupp{i}^{-\frac{(\zeta+\eps)^{-\ell-1}}{\beta-1}},1\right\} \min\left\{\left(\frac{w\w v}{r^dn}\right)^\alpha,1\right\}\right)dw dr.
\end{align*} 
To compute this integral, note that whenever the second minimum is attained by $1$, the inner integral runs either over a polynomial in $w$ with exponent $1-\beta+\eta < -1$, or over the zero function. On the other hand, whenever the second minimum is is attained by the expression $(w\w v/(r^dn))^\alpha$, then the inner integral runs over a a polynomial in $w$ with exponent larger than $-1$ (either with exponent $\alpha -\beta +\eta >-1$, or even with exponent $\alpha -\beta +\eta +1$). Therefore, by Lemma~\ref{lem:evalintegral} for $\tilde{w} = r^dn/\w v$, we obtain in all cases
\begin{align}\nonumber
\EE\left[\left|U_*\right|\right] & = O(1)\int_0^{\infty} r^{d-1}n  \left(\frac{r^dn}{\w v}\right)^{1-\beta+\eta} \min\left\{\left(\frac{r^dn}{\w v}\right)2^{(\ell-1)d}\Nuupp{i}^{-(\zeta+\eps)^{-\ell-1}/(\beta-1)},1\right\}dr.
\end{align}
Similarly, let $r_*$ be defined by $r_*^dn/\w{v}=2^{-(\ell-1)d}\Nuupp{i}^{(\zeta+\eps)^{-\ell-1}/(\beta-1)}$, then the exponent of $r$ in the antiderivative of the integrand is positive for all $r<r_*$ and negative for all $r>r_*$. Hence,
\begin{align}\label{eq:uppere2}
\EE\left[\left|U_*\right|\right] &= O(1)\w v\left(2^{(\ell-1)d} \Nuupp{i}^{-(\zeta+\eps)^{-\ell-1}/(\beta-1)}\right)^{\beta-2-\eta}\leq \frac{1}{2}\w v2^{\ell d} \Nuupp{i}^{-(\zeta+\eps)^{-\ell-2}/(\beta-1)},
\end{align} 
where the second step can be seen by a case distinction as follows. First recall $(\zeta+\eps)(\beta-2-\eta)> 1$ since $\zeta = 1/(\beta-2)$. If $\ell \ge \ell_0$ for some suitable constant $\ell_0$, then we have $O(1)2^{(\ell-1)d(\beta-2-\eta)} \leq \tfrac12 2^{\ell d}$ since $\beta-2- \eta <1$, and the inequality follows immediately. On the other hand, if $\ell < \ell_0 = O(1)$ then the term $\Nuupp{i}^{-(\zeta+\eps)^{-\ell-2}}$ is $o(1)$. Since we strictly decrease the exponent of this term, we can swallow any constant factor, and again the inequality follows. 
Together, equations~\eqref{eq:uppere1}~and~\eqref{eq:uppere2} prove the claim due to Markov's inequality.
\end{proof}

\subsection{Isolation strategies: Proof of Theorem~\ref{thm:containment}}\label{sec:containment}

In this section we prove Theorem~\ref{thm:containment}. As outlined in Section~\ref{sec:intuition}, the corollary is a rather straightforward consequence of Theorem~\ref{thm:speedupper} (e). 
\begin{proof}[Proof of Theorem~\ref{thm:containment}]
By Theorem~\ref{thm:speedupper}, whp there is no vertex outside of $\Bupp i$ which is active in round $i$. Therefore, it suffices to (permanently) remove by the end of round $i$ all edges that cross the boundary of $\Bupp i$, i.e.\ all edges in $E(\Bupp{i},\Space\setminus \Bupp{i})$. This is very similar to~\cite[Lemma 7.1 and Theorem 7.2]{bringmann2018geometric}, where the number of edges cutting a grid is considered. It does not follow directly from this lemma since the error terms in~\cite{bringmann2018geometric} are too large for our purposes. However, what \emph{does} follow directly from their proof is that among those edges that are completely contained in $2\Bupp i$, the number of edges that cross a fixed axis-parallel hyperplane is at most $\Nuupp{i}^{\max\{3-\beta, 1-1/d\}+o(1)}$. Since the boundary of $\Bupp i$ consists of a constant number of faces, this proves the bound for those edges which have both endpoints in $2\Bupp i$. 

So it remains to consider the set $\edgesBoundary{i}:=E(\Bupp i, \Space\setminus 2\Bupp i)$. Let $\eta >0$ be any constant, and let $v = (\x v, \w v)$ be a vertex such that $\x{v}\in\Space \setminus2\Bupp i$. Then by Lemma ~\ref{lem:neighExp} (in the case $\alpha > \beta-1$) the expected number of neighbours of $v$ inside of $\Bupp i$ is
\begin{align*}
\EE[|\VS{}{\Bupp i}{}{v}|]  = O(\Nuupp i) \min\left\{\left(\frac{\w{v}}{\|\x{v}\|^d n}\right)^{\beta-1-\eta},1\right\}.
\end{align*}
Note that $v$ has distance at least $r_i := \tfrac12 (\Nuupp i/n)^{1/d}$ from the origin. Thus we may use Lemma~\ref{lem:integral} and Lemma~\ref{lem:evalintegral} with $\tilde w = r^dn$ to estimate
\begin{align*}
\EE[|\edgesBoundary{i}|] & = O(1)\int_{r_i}^\infty r^{d-1}n \int_{0}^\infty \w{v}^{1-\beta+2\eta} \frac{d}{d\w{v}} \Nuupp i  \min\left\{\left(\frac{\w{v}}{r^d n}\right)^{\beta-1-\eta},1\right\} d\w{v} dr \\ 
& = O(\Nuupp i) \int_{r_i}^\infty  r^{-1}(r^dn)^{2-\beta+2\eta}dr = O(\Nuupp i^{3-\beta+2\eta}).
\end{align*} 
Since $\Nuupp i = \omega(1)$, we can deduce that $\EE[|\edgesBoundary{i}|] \leq \Nuupp i^{3-\beta+3\eta}$ for sufficiently large $n$. Since this holds for all $\eta >0$, the claim follows.
\end{proof}

\section{Infection times: Proof of Theorem~\ref{thm:inftime}}\label{sec:inftime}

In this section we prove Theorem~\ref{thm:inftime}, which gives a precise formula for the infection time of an individual vertex. As outlined in Section~\ref{sec:intuition}, Theorem~\ref{thm:inftime} is a straightforward consequence of the upper and lower bounds for the probability to be infected that are given in Theorem~\ref{thm:speedupper} and~\ref{thm:speedlower}. However, due to the rather technical nature of these theorems, the proof is still a rather tedious calculation. We distinguish several cases as in the definition of $\actExp{\x v, \w v}$, see~\eqref{eq:defofellv}. The first two cases correspond to the first line in~\eqref{eq:defofellv}, which only apply to vertices $v$ of exceptionally high weight, as they typically do not exist in GIRGs, cf.\ Section~\ref{sec:remarks}. If the maximum in~\eqref{eq:defofellv} is $0$, Case (I), then the weight is so large that $v$ has neighbours in the source region $\Blow 0$, and will become active in a constant number of rounds. If the maximum is not $0$, Case (II), then $v$ has neighbours that are untypically close to $\Blow 0$ (closer than the neighbours of other vertices in distance $|\x v|$), but not necessarily inside of $\Blow 0$. Finally, the most relevant Case (III) corresponds to the second line of~\eqref{eq:defofellv}. For the lower bound, we discriminate yet two more subcases (IIIa) and (IIIb), where (IIIb) covers the case that $v$ lies just outside (by a small factor $2^{\ell+1}$) of the boundary of~$\Blow 0$. This case requires extra treatment, since Theorem~\ref{thm:speedupper} does not apply to such vertices. Note that this complication is natural since vertices too close to the boundary may be activated via different pathways, in particular they may have direct low-weight neighbours in $\Blow 0$.

\begin{proof}[Proof of Theorem~\ref{thm:inftime}]
Let $v=(\x{v},\w{v})$ be a fixed vertex that satisfies the conditions in Theorem~\ref{thm:inftime}. We use the parameters and notation given in Definition~\ref{def:BallsEtc}.

First we remark that it suffices to show that for every sufficiently small  $\eps >0$, whp $ \act{v} = (1\pm O(\eps))\actExp{\x{v},\w{v}} \pm O(1)$, where the hidden constants are both independent of $\eps$. Then by a standard diagonalizing argument, we also have whp $\act{v} = (1\pm o(1))\actExp{\x{v},\w{v}} \pm O(1)$. We split the proof in three parts (I), (II) and (III), ``typical'' vertices are treated in (III): \medskip

{\it (I): Assume that $\w v> ((2\|\x v\|)^dn)^{1/(\beta-1)}$ and the maximum in~\eqref{eq:defofellv} is $0$, i.e.\ we also have $\w{v}\ge (2\|\x v\|)^dn/\Nuinit$.} In this case, the lower bound on $\act{v}$ is trivial, so we show the upper bound. Let $i\geq 1$ be so large that $(\zeta-\eps)^i/(\beta-1+\eta) > 1$, but observe that we may still choose $i=O(1)$. 

Assume first $(2\|\x v\|)^dn \leq \Nulowtruncated{i}$, so $\x{v} \in \Blow i$. But we recall that $\x{v}\in\Space\setminus\Blow{0}$ and so we have $(2\|\x v\|)^dn >\Nuinit$, and hence $\w v >\Nuinit^{1/(\beta-1)} = \Nulow i^{\Omega(1)}$ as $i=O(1)$. Hence, we may choose $\ell = O(1)$ such that $\w v \geq \whlow{i}{\ell}= \Nulow{i}^{(\zeta-\eps)^{-\ell}/(\beta-1+\eta)}$, and it follows directly from part (c) of Theorem~\ref{thm:speedlower} that whp $v$ is active after $i+\ell+3 = O(1)$ rounds, as required. 

On the other hand, if $(2\|\x v\|)^dn \geq \Nulowtruncated{i}$, then every vertex in $\Blow i$ has distance at most $2\|\x{v}\|$ from $v$. Recall that $\whlow{i}{}=\Nulowtruncated{i}^{1/(\beta-1+\eta)}$. Moreover, since $\|\x{v}\|\le 1/2$, we have $\Nulowtruncated{i}=\min\{\Nulow{i},n\}=\Nulow{i}$, and thus $\whlow{i}{}=\Nulow{i}^{1/(\beta-1+\eta)}$. By Theorem~\ref{thm:speedlower}~(b), after $i+3$ rounds all vertices in $\VS{\ge \whlow{i}{}}{\Blow{i}}{}{}$ are active whp, and there are $\Nulow{i}^{\Omega(\eta)}=\omega(1)$ many such vertices. Note that any such vertex has probability $\Omega(1)$ to form an edge with $v$, since $\w v \whlow{i}{}/((2\|\x{v}\|)^dn) \geq \whlow{i}{}/\Nuinit = \Omega(1)$. Therefore, whp $v$ is active in round $i+4 = O(1)$, again as required.\medskip

{\it (II): Assume that still $\w v> ((2\|\x v\|)^dn)^{1/(\beta-1)}$, but that the maximum in~\eqref{eq:defofellv} is attained by the second term, i.e.\ $(2\|\x v\|)^dn/\w{v}\ge\Nuinit$.} We need to show an upper and a lower bound on $\act{v}$. For the upper bound, choose $i\geq 0$ minimal such that 
\begin{equation}\label{eq:speedupper1}
(2\|\x v\|)^dn/\w v \leq w_i,
\end{equation} 
where we recall that $\whlow{i}{}=\Nulowtruncated{i}^{1/(\beta-1+\eta)}\le \Nuinit^{(\zeta-\eps)^i/(\beta-1+\eta)}$. Observe that this $i$ satisfies
\[
i= \log \log_\Nuinit ((2\|\x v\|)^dn/\w v) /\log(\zeta-\eps) +O(1)\leq (1+O(\eps))\actExp{\x{v},\w{v}} +O(1).
\] 
By Theorem~\ref{thm:speedlower}~(b), whp all vertices in $\VS{\ge\whlow{i}{}}{\Blow{i}}{}{}$ are active in round $i+3$, and there are $\omega(1)$ of them. As in (I), we discriminate two sub-cases. 

Either $(2\|\x v\|)^dn \geq \Nulowtruncated{i}$, which implies $\Nulowtruncated{i}=\Nulow{i}$ as before. In this case, the distance from $v$ to any point in $\Blow i$ is at most $2\|\x v\|$, and $v$ has probability $\Omega(1)$ to form an edge with each vertex in $\VS{\ge\whlow{i}{}}{\Blow{i}}{}{}$ by~\eqref{eq:speedupper1}. By Theorem~\ref{thm:speedlower}~(b), whp all these vertices are active in round $i+3$, and there are $\omega(1)$ of them, so whp $v$ will be active in round $i+4$. 

Or $(2\|\x v\|)^dn \leq \Nulowtruncated{i}$, hence $\x{v}\in \Blow{i}$. Then we observe that by the minimality of $i$ in~\eqref{eq:speedupper1} we have $\w{v}>((2\|\x v\|)^dn)^{1/(\beta-1)}\ge (\whlow{i-1}{}\w{v})^{1/(\beta-1)} $, and consequently we have $\w{v}>\whlow{i-1}{}^{1/(\beta-2)}\ge \whlow{i}{}^{1/((\beta-2)(\zeta-\eps))}$, by~\eqref{eq:Nulowtruncated}, and thus $\w{v}\ge \whlow{i}{}$, because $(\beta-2)(\zeta-\eps)<1$. Therefore, by Theorem~\ref{thm:speedlower}~(b) and~(c) whp $v$ is active in round $i+3$. In either case, whp $v$ is active in round $i+O(1)$, i.e.\ $\act{v}\le i+O(1)\le (1+O(\eps))\actExp{\x{v},\w{v}}+O(1)$, as required. \smallskip

For the lower bound, if $(2\|\x v\|)^dn/\w v \leq \Nuupp 0 = \Nuinit^{(\beta-1)/(\beta-2)+\eps}$ then $\actExp{\x{v},\w{v}} = O(1)$, so there is nothing to show. 

Otherwise, $(2\|\x v\|)^dn/\w v \geq \Nuupp 0 \geq \Nuupp 0^{1/(\beta-1-\eta)}$, so we may choose $i \geq 0$ to be maximal such that
\begin{equation}\label{eq:speedlower1}
(2\|\x v\|)^dn/\w v \geq \tilde w_i,
\end{equation} 
where $\whupp{i}{} :=\Nuupp i^{1/(\beta-1-\eta)}=\Nuupp 0^{{(\zeta+\eps)^{i}/(\beta-1-\eta)}}$. Note that this $i$ satisfies 
\[
i= \log \log_\Nuinit ((2\|\x v\|)^dn/\w v) /\log(\zeta+\eps) -O(1)\geq (1-O(\eps))\actExp{\x{v},\w{v}} -O(1).
\]
Let $\ell = O(1)$ be sufficiently large such that $\zeta^{\ell} > (\beta-1-\eta)$. If $i \leq \ell$, then $i=O(1)$, and there is nothing to show. Otherwise,~\eqref{eq:speedlower1} implies in particular
\[
(2\|\x v\|)^dn \geq \Omega(1)\Nuupp i^{1/(\beta-1-\eta)} = \Omega(1)\Nuupp{i-\ell}^{(\zeta+\eps)^{\ell}/(\beta-1-\eta)} > 2\Nuupp{i-\ell}.
\]
Hence, by Lemma~\ref{lem:neighExp} (with $C=2^{1/d}>1$) we obtain
\begin{align*}
\EE[|N(v) \cap \Bupp{i-\ell}|] & = O(1)\Nuupp{i-\ell} \left(\|\x v\|^dn/\w v\right)^{1-\beta+\eta}.
\end{align*}
Using~\eqref{eq:speedlower1}, we may continue
\begin{align*}
\EE[|\VS{}{\Bupp{i-\ell}}{}{v}|] & = O(1)\Nuupp{i-\ell}/ \Nuupp i = \Nuupp i^{-\Omega(1)},
\end{align*}
where the last step holds for any $\ell\ge 1$. By Markov's inequality, whp $v$ has no neighbours in $\Bupp{i-\ell}$. On the other hand, by Theorem~\ref{thm:speedupper} whp there is no active vertex outside of $\Bupp{i-\ell}$ in round $i-\ell$. Therefore, whp $v$ is not active in round $i-\ell+1$, i.e.\ $\act{v} > i-\ell+1 \geq (1-O(\eps))\actExp{\x{v},\w{v}} -O(1)$, as required.\medskip

{\it (III): Assume $\w v\leq ((2\|\x v\|)^dn)^{1/(\beta-1)}$.} Again we need to show an upper and a lower bound for $\act{v}$. For the \emph{upper bound}, let $i\geq 0$ be minimal with the property that $\x v \in \Blow{i}$, i.e.\
\begin{equation}\label{eq:speedupper2}
(2\|\x v\|)^dn \le \Nulowtruncated{i} = \min\{\Nuinit^{(\zeta-\eps)^i},n\}
\end{equation}
Observe that $i\ge 1$ because $n\ge(2\|\x v\|)^dn > \Nuinit$ since $\x{v}\in \Space\setminus\Blow{0}$. Therefore, we have $(2\|\x v\|)^dn \geq \Nulowtruncated{i-1}\ge  \Nulowtruncated i^{1/(\zeta-\eps)}$ by minimality of $i$ and~\eqref{eq:Nulowtruncated}. Let $\ell \geq 0$ be minimal with the property that 
\begin{equation}\label{eq:speedupper3}
\w v > ((2\|\x v\|)^dn)^{(\zeta-\eps)^{-\ell}/(\beta-1)} 
\end{equation}
Since we are in the case $\w v\leq ((2\|\x v\|)^dn)^{1/(\beta-1)}$, we have $\ell \ge 1$, and thus~\eqref{eq:speedupper3} is false if we replace $\ell$ by $\ell-1$. By minimality of $i$, the right hand side of~\eqref{eq:speedupper3} is at least $\Nulowtruncated{i}^{(\zeta-\eps)^{-\ell-1}/(\beta-1)}\ge\Nulowtruncated{i}^{(\zeta-\eps)^{-\ell-1}/(\beta-1+\eta)}=\whlow{i}{\ell+1}$ and recall that we only consider weights $\w{v}=\omega(1)$. Hence,  Theorem~\ref{thm:speedlower}~(c) applies for $\ell+1$, and, if we condition on events that hold whp, tells us that $v$ is active in round $i+\ell+4$ with probability 
\[
1-\exp\left[-C_1\Nulowtruncated{i}^{C_2(\zeta-\eps)^{-\ell-1}}\right]=1-o(1),
\]
where the last inequality holds due to the following estimate
\[
\Nulowtruncated{i}^{(\zeta-\eps)^{-\ell-1}}\stackrel{\eqref{eq:speedupper2}}{\ge}((2\|\x v\|)^dn)^{(\zeta-\eps)^{-\ell-1}}\stackrel{\eqref{eq:speedupper3}\text{ for }\ell-1}{\ge} \w{v}^{(\beta-1)/(\zeta-\eps)^2}=\omega(1).
\]
It remains to note that by the choice of $i$ and $\ell$ we have 
\[
i = \log \log_\Nuinit ((2\|\x v\|)^dn)/\log(\zeta-\eps) +O(1)
\]
 and $\ell = i + \log \log_\Nuinit \w v/\log(\zeta-\eps) +O(1)$. Hence, whp $\act{v}\le i+\ell+4=  (1+O(\eps))\actExp{\x{v},\w{v}}+O(1)$, as required.\smallskip

For the \emph{lower bound}, we distinguish yet two more sub-cases. Let $\ell \geq 0$ be the smallest non-negative integer that satisfies 
\begin{equation}\label{eq:speedlower3}
\w v^{2} \geq 2^{-(\ell+1) d}((2\|\x v\|)^dn)^{(\zeta+\eps)^{-\ell-3}/(\beta-1)}.
\end{equation}

{\it (IIIa) Assume first that $(2\|\x v\|)^dn \geq 2^{(\ell+1)d}\Nuupp 0$.} In this case, let $i\geq 1$ be maximal with the property
\begin{equation}\label{eq:speedlower4}
(2\|\x v\|)^dn \geq 2^{(\ell+1)d}\Nuupp{i-1} = 2^{(\ell+1)d}\Nuupp 0^{(\zeta+\eps)^{i-1}}.
\end{equation}
It is easy to check (e.g., by using the very generous estimate $2 < \Nuupp 0^{(\zeta+\eps)^i}$) that $i$ satisfies 
\begin{equation}\label{eq:speedlower6}
i \geq  \log \log_\Nuinit((2\|\x v\|)^dn)/\log(\zeta+\eps) -O(\log(\ell+1)).
\end{equation}
 
 If $\ell =O(1)$ then $\w v = ((2\|\x v\|)^dn)^{\Theta(1)}$ and therefore
 \[
 \actExp{\x{v},\w{v}} = \log \log_\Nuinit((2\|\x v\|)^dn)/|\log(\beta-2)| \pm O(1).
 \]
Since by Theorem~\ref{thm:speedupper} (c) whp no vertex outside of $\Bupp{i-1}$ is active in round $i-1$ and $\x{v}\not\in\Bupp{i-1}$ by~\eqref{eq:speedlower4}, it follows that whp $\act{v}>i-1=  (1-O(\eps))\actExp{\x{v},\w{v}}-O(1)$, as required. This settles the case $\ell = O(1)$.
 
Next observe that by maximality of $i$ in~\eqref{eq:speedlower4} there exists $0 \leq j = O(\log(\ell+1))$ such that $(2\|\x v\|)^dn \leq \Nuupp{i+j}$. In particular, if $\ell>C$ for some sufficiently large constant $C>0$ then $j < \ell$. Since we have already treated the case $\ell = O(1)$, we may henceforth assume that $\ell >C$. Then $\ell-j >0$, and by~\eqref{eq:speedlower4} the requirements of Theorem~\ref{thm:speedupper}~(f) are met for $i$ and $\ell - j$. Since in particular $\ell\ge 1$, by the choice of $\ell$, we have 
\begin{align}\nonumber
\w v^{2} & \leq 2^{-\ell d}((2\|\x v\|)^dn)^{(\zeta+\eps)^{-\ell-2}/(\beta-1)} \leq 2^{-\ell d}\Nuupp{i+j}^{(\zeta+\eps)^{-\ell-2}/(\beta-1)} \\
& =  2^{-\ell d}\Nuupp{i}^{(\zeta+\eps)^{-(\ell-j)-2}/(\beta-1)},\label{eq:speedlower5}
\end{align}
and therefore Theorem~\ref{thm:speedupper} (f) yields that $v$ is not active in round $i+\ell-j$ with probability at least 
\[
1-\w{v}2^{(\ell-j)d}\Nuupp{i}^{-(\zeta+\eps)^{-(\ell-j)-2}/(\beta-1)}\stackrel{\eqref{eq:speedlower5}}{\geq}1-\w{v}^{-1}2^{-j d} = 1-o(1).
\]
In order to relate $i+\ell-j$ with $\actExp{\x{v},\w{v}}$, we derive $(2\|\x v\|)^dn \geq \Nuupp 0^{(\zeta+\eps)^{i-1}}$ from~\eqref{eq:speedlower4}, and plug it into~\eqref{eq:speedlower3} to obtain 
\begin{equation*}
\w v^2 \geq 2^{-(\ell+1) d}\Nuupp 0^{(\zeta+\eps)^{i-\ell-4}/(\beta-1)} = 2^{-(\ell+1) d}\Nuinit^{\Theta(1)(\zeta+\eps)^{i-\ell}}.
\end{equation*}
Hence, taking logarithms on both sides, 
\begin{equation}\label{speedlower7}
\Theta(1)(\zeta+\eps)^{i-\ell}\log \Nuinit \leq 2\log \w{v} + (\ell+1) d\log 2  \leq 4d\max\{\log \w{v},\ell\}.
\end{equation}
If the maximum is attained by $\log \w{v}$, then~\eqref{speedlower7} gives $\ell \geq i-\log \log_\nu \w{v}/\log(\zeta+\eps) -O(1)$, and together with~\eqref{eq:speedlower6} and $j = O(\log(1+\ell))$, we conclude $i+\ell-j = (1-O(\eps))\actExp{\x{v},\w{v}} -O(1)$, as required. On the other hand, if the maximum in~\eqref{speedlower7} is attained by $\ell$, then~\eqref{speedlower7} yields
\[
\ell + \frac{\log \ell}{\log(\zeta+\eps)}= i + \frac{\log \log \Nuinit}{\log(\zeta+\eps)} -O(1) = i - \frac{\log \log_\Nuinit \w{v}}{\log(\zeta+\eps)} -O(1),
\]
where the second inequality comes from $\w{v} = \omega(1)$. Thus we obtain again $i+\ell-j = (1-O(\eps))\actExp{\x{v},\w{v}} -O(1)$, as required. This concludes the proof of the lower bound in the case $(2\|\x v\|)^dn \geq 2^{(\ell+1)d}\Nuupp 0$.\smallskip

{\it (IIIb) Assume $(2\|\x v\|)^dn \leq 2^{(\ell+1)d}\Nuupp 0$.} It remains to show the lower bound on $\act{v}$ in this case. We want to apply Theorem~\ref{thm:speedupper} (f) for $i=0$, but we need to change the definition of $\ell$ slightly. Let $\ell' \geq 0$ be the smallest non-negative integer satisfying  
\begin{equation}\label{eq:speedlower2}
\w{v}^{2} \geq 2^{-(\ell'-1) d} \Nuupp 0^{(\zeta+\eps)^{-\ell'-1}/(\beta-1)}.
\end{equation}
Similar as in (IIIa), this definition implies 
\begin{equation}\label{eq:sizeEllPrime}
\ell' +\log(\ell'+1)/\log(\zeta+\eps)\geq -\log \log_\Nuinit \w{v}/\log(\zeta+\eps) -O(1).
\end{equation}
 If $\ell' \leq 1$ then $\w v = \Nuupp 0^{\Omega(1)}$. In this case, since $(2\|\x v\|)^dn \leq 2^{(\ell+1)d}\Nuupp 0$, a sufficient condition for $\ell$ to satisfy~\eqref{eq:speedlower3} is
\[
\Nuupp 0^{\Omega(1)} \geq  2^{-(\ell+1) d}(2^{(\ell+1)d}\Nuupp 0)^{(\zeta+\eps)^{-\ell-3}/(\beta-1)}.
\]
Since this is already satisfied for some large enough constant, by the definition of $\ell$, this implies $\ell = O(1)$ and thus $(2\|\x v\|)^dn = \Nuupp 0^{O(1)}$, and the lower bound is trivial, because $\actExp{\x{v},\w{v}}=O(1)$. 

So assume instead that $\ell' >1$. Let $\ell^* := \min\{\ell'-1, \actExp{\x{v},\w{v}}\}$. Then by the assumption in Theorem~\ref{thm:inftime}, we have $\ell^* \leq \tfrac1d\log_2((2\|\x v\|)^dn/\Nuupp 0)$, and hence $(2\|\x v\|)^dn \geq 2^{\ell^*d}\Nuupp 0$. Since $\ell^* < \ell'$, the reverse of~\eqref{eq:speedlower2} holds for $\ell^*$. These two properties allow us to apply Theorem~\ref{thm:speedupper} (f) with $i=0$ and $\ell^*-1$, which tells us that $v$ is not active in round $\ell^*-1$ with probability at least 
\[
1-\w{v}2^{(\ell^*-1)d}\Nuupp{0}^{-(\zeta+\eps)^{\ell^*-1}/(\beta-1)}\ge 1-\w v^{-1} = 1-o(1).
\]
It remains to show the lower bound $\ell^*=(1-O(\eps))\actExp{\x{v},\w{v}}-O(1)$. This bound will follow from the definition of $\ell^*$ if we can show $\actExp{\x{v},\w{v}} = (1+O(\eps))\ell' + O(1)$. We recall that by definition $\actExp{\x{v},\w{v}} = (1+O(\eps))(2\log\log_{\Nuinit} ((2\|\x{}\|)^dn)-\log\log_{\Nuinit} \w{})/\log(\zeta+\eps)$, where the $O(\eps)$ error term comes from replacing $|\log(\beta-2)|$ by $\log(\zeta+\eps)$. The second term in $\actExp{\x{v},\w{v}}$ occurs in~\eqref{eq:sizeEllPrime}. Since the left hand side of~\eqref{eq:sizeEllPrime} is of the form $\ell' + o(\ell')$, we may bound the summand by $-\log\log_{\Nuinit} \w{}/\log(\zeta+\eps) = (1+o(1))\ell' + O(1)$. For the other term, since $(2\|\x v\|)^dn \leq 2^{(\ell+1)d}\Nuupp 0$, we have $\log\log_\Nuinit((2\|\x v\|)^dn) = o(\ell)+O(1)$. However, by the choice of $\ell$, we have 
\[
\w v^2 \leq 2^{-\ell d}((2\|\x v\|)^dn)^{(\zeta+\eps)^{-\ell-2}/(\beta-1-\eta)} \leq (2^{(\ell+1)d}\Nuupp 0)^{(\zeta+\eps)^{-\ell-2}/(\beta-1)},
\]
and similar as for~\eqref{eq:sizeEllPrime} it can be deduced that $\ell = O(1+|\log \log_\Nuinit \w{v}|) = O(\ell')$. Hence, we can bound the first term in $\actExp{\x{v},\w{v}}$ as $\log \log_\Nuinit((2\|\x v\|)^dn) = o(\ell') + O(1)$, which shows that $\actExp{\x{v},\w{v}} = (1+O(\eps))\ell' + O(1)$. 
This concludes the proof for the case $(2\|\x v\|)^dn \leq 2^{(\ell+1)d}\Nuupp 0$.
\end{proof}

\section{Threshold and speed of the process: Proof of Theorem~\ref{thm:main} and Theorem~\ref{thm:numberofrounds}}\label{sec:proofmain}

We prove Theorem~\ref{thm:main} and Theorem~\ref{thm:numberofrounds} together, we start by proving the second statement of Theorem~\ref{thm:numberofrounds}. 
\begin{claim}\label{claim:secondstmt}
Let $\delta>0$ be a constant. Assume that  $\alpha>\beta-1$ and that there exists a constant $C>\frac{\beta-1}{\beta-2}$ such that $\Nuinit^C \le n$, then $|\VS{}{}{\le(1-\delta)i_\infty}{}|=o(n)$ whp.
\end{claim}
\begin{proof}
We use the parameters and notation given in Definition~\ref{def:BallsEtc}, where $\eps>0$ is a sufficiently small constant. More precisely, fix $0<\eps<\min\left\{\frac{3-\beta}{\beta-2},C-\frac{\beta-1}{\beta-2},\delta \zeta |\log (\beta-2)|/2\right\}$. Recall that $\zeta=1/(\beta-2)>1$ and thus $\log(\zeta) = |\log(\beta-2)|$. Note that thus we obtain
\begin{equation}\label{eq:choiceOfEps}
\frac{|\log(\beta-2)|}{\log(\zeta+\eps)}= \frac{1}{1+\frac{\log(1+\eps/\zeta)}{|\log(\beta-2)|}}\ge 1-\frac{\eps}{\zeta|\log(\beta-2)|}> 1-\delta/2,
\end{equation}
as $\log(1+x)\le x$ and $1/(1+x)\ge 1-x$ for all $x\ge 0$. Furthermore, let $\eps_0>0$ be a constant such that $(1-\eps_0)C\ge \frac{\beta-1}{\beta-2}+\eps$, which exists since $\eps<C-\frac{\beta-1}{\beta-2}$.

  We let $i_0$ be the largest integer such that $\Nuupp{i_0-1} \leq n^{1-\eps_0}$ and note that this is well-defined and we have $i_0\ge 1$, since $$
 \Nuupp{0}=\Nuinit^{\frac{\beta-1}{\beta-2}+\eps}\le (\Nuinit^{C})^{1-\eps_0}\le n^{1-\eps_0} 
  $$
  by the choice of $\eps_0$ and using the assumption $\Nuinit^C \le n$.  Next let  $i_1$ be the smallest integer satisfying $i_1 \ge (1-\delta/2)(\log \log n)/|\log(\beta-2)|$ and observe that $2^{(i_1+1)d} = (\log n)^{O(1)}$. Therefore we have $2^{(i_1+1)d}\Nuupp{i_0-1}\le n^{1-\eps_0}(\log n)^{O(1)} = o(n)$, so whp there are $o(n)$ vertices in $2^{i_1+1}\Bupp{i_0-1}$. 

Now we consider vertices outside of $2^{i_1+1}\Bupp{i_0-1}$. First we note that by minimality of $i_1$ we have
$$
(\zeta+\eps)^{-(i_1-1)}\ge (\zeta+\eps)^{-(1-\delta/2)(\log \log n)/|\log(\beta-2)|}=(\log n)^{-(1-\delta/2)\log (\zeta+\eps)/|\log(\beta-2)|}
$$
and since $\Nuupp{i_0-1}<n^{1-\eps_0}$ we thus obtain 
$$
\Nuupp{i_0-1}^{-(\zeta+\eps)^{-(i_1-1)}}\le \exp\left(- (1-\eps_0)(\log n)^{1-(1-\delta/2)\log (\zeta+\eps)/|\log(\beta-2)|}\right)=\exp\left(-(\log n)^{\Omega(1)}\right),
$$
where the last estimate holds by~\eqref{eq:choiceOfEps}. Therefore, by Theorem~\ref{thm:speedupper} (f) each vertex in $\Space\setminus 2^{i_1+1}\Bupp{i_0-1}$ of weight at most $\log \log n$ is in $\VS{}{}{\le i_0+i_1}{}$ with probability at most 
$$
 2^{(i_1+1)d+1}\Nuupp{i_0-1}^{-(\zeta+\eps)^{-i_1-1}/(\beta-1)}\log\log n= (\log n)^{O(1)}\exp\left(-(\log n)^{\Omega(1)}\right)=o(1).
$$
 Therefore, the expected number of vertices of weight at most $\log \log n$ in $\VS{}{}{\le i_0+i_1}{}$ is $o(n)$. On the other hand, the total expected number of vertices of weight larger than $\log \log n$ is also $o(n)$. Altogether, this shows $\EE[|\VS{}{}{\le i_0+i_1}{}|]=o(n)$, and the statement follows from Markov's inequality, once we show that $i_0+i_1 \geq (1-\delta)i_\infty$.
 
To prove this, we distinguish two cases. First assume that $\Nuinit=n^{o(1)}$ and thus $\log\log_{\Nuinit} n=\omega(1)$. Recall that $\log_\Nuinit(\Nuupp{0}) = \Theta(1)$ by Definition~\ref{def:BallsEtc}. Furthermore, by the maximality of $i_0$ we have $\Nuupp{0}^{(\zeta+\eps)^{i_0}}=\Nuupp{i_0}>n^{1-\eps_0}$ implying 
 \[
 i_0 >\frac{\log \log_{\Nuupp{0}}n^{1-\eps_0}}{\log(\zeta +\eps)}\stackrel{\eqref{eq:choiceOfEps}}{\ge} \frac{1-\delta/2}{|\log(\beta-2)|} \cdot \log\left( \frac{(1-\eps)\log_\Nuinit n}{\log_\Nuinit(\Nuupp{0})}\right)
 \geq (1-\delta)\frac{\log \log_\Nuinit n}{|\log(\beta-2)|},
 \] 
 since $(1-\eps)/\log_\Nuinit(\Nuupp{0}) = \Theta(1)$ and $\log\log_{\Nuinit}n=\omega(1)$. But then clearly we have $i_0+i_1\ge (1-\delta)i_\infty$.
 
 On the other hand, if $\Nuinit\ge n^{1/C_0}$ for some constant $C_0\ge \frac{\beta-1}{\beta-2}>1$ and sufficiently large $n$, then $\log\log_{\Nuinit}n\le \log C_0$ and thus $i_\infty \le\frac{\log\log n+\log C_0}{|\log(\beta-2)|}$. Hence, we have $i_0+i_1\ge i_1\ge (1-\delta)i_\infty + R$ with $R:=\frac{\delta \log\log n-2\log C_0}{2|\log(\beta-2)|}\ge 0$ for all sufficiently large $n$, completing the proof. 
\end{proof}

\subsection*{Subcritical regime: (iii), (v).}
 We will show that whp the process does not infect any vertices in the first step. 
\begin{claim}\label{claim:sub}
$\VS{}{}{\le 1}{}=\VS{}{}{\le 0}{}$ whp.
\end{claim}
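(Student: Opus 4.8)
The plan is to reduce the claim to a first-moment estimate and then to a single integral. It suffices to show that whp \emph{no} vertex of $V$ has $k$ or more neighbours in $\VS{}{}{\le 0}{}$: granting this, no $v\notin\VS{}{}{\le 0}{}$ can enter $\VS{}{}{=1}{}$, and since $\VS{}{}{\le 0}{}\subseteq\VS{}{}{\le 1}{}$ always, this forces $\VS{}{}{\le 1}{}=\VS{}{}{\le 0}{}$. For the reduction I would use the Poisson thinning fact: given a fixed $v=(\x{v},\w{v})$, the count $|\VS{}{}{\le 0}{v}|$ of its infected neighbours is $\Poi{\mu_v}$ with $\mu_v:=\infpar\cdot\EE[|\VS{}{\Binit}{}{v}|]$, so $\Pr[|\VS{}{}{\le 0}{v}|\ge k]\le\min\{\mu_v^k/k!,\,1\}$; and Lemma~\ref{lem:neighExp} bounds $\EE[|\VS{}{\Binit}{}{v}|]$ by $O(\min\{\w{v},\Nuinit\})$ if $\x{v}$ lies within a constant factor of the radius of $\Binit$, and by $O(\Nuinit)\min\{(\w{v}/(\|\x{v}\|^dn))^{m},1\}$, where $m:=\min\{\alpha,\beta-1-\eta\}>1$, otherwise.

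The difficulty is that $\pld$ has an infinite $k$-th moment (since $k\ge 2>\beta-1$), so the naive sum $\sum_v\Pr[|\VS{}{}{\le 0}{v}|\ge k]$ diverges and has to be tamed by first restricting the weights. I would condition on the whp event $\mathcal{K}$ that no vertex's weight exceeds the typical maximum for its distance from the origin: Lemma~\ref{lem:nolargeweights} with $B=\Binit$ controls the vertices outside $\Binit$, and the same Markov argument applied to $\Binit$ itself shows that whp no vertex of $\Binit$ has weight at least $\Nuinit^{1/(\beta-1-\eta)}$, so $\Pr[\overline{\mathcal{K}}]=\Nuinit^{-\Omega(\eta)}=o(1)$. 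Writing $\hat w(x)$ for the resulting pointwise weight cap, on $\mathcal{K}$ every vertex obeys $\w{v}\le\hat w(\x{v})$, so by linearity over the Poisson process it remains to prove
\[
\int_{\Space}\int_{\wmin}^{\hat w(x)}n\,\Pr\!\big[\Poi{\mu_{(x,w)}}\ge k\big]\,d\pld(w)\,dx=o(1),
\]
after which Markov's inequality together with $\Pr[\overline{\mathcal{K}}]=o(1)$ finishes the claim.

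The remaining work is to evaluate this integral with Lemma~\ref{lem:integral} and Lemma~\ref{lem:evalintegral}, using the tail bound $\Pr[D\ge w]=O(w^{1-\beta+\gamma})$ for a small constant $\gamma>0$ (with $\gamma=0$ admissible in case (iii)). Over the part of $\Space$ within a constant factor of $\Binit$---of volume $O(\Nuinit/n)$, where $\mu_{(x,w)}=O(\infpar\min\{w,\Nuinit\})$---the contribution is $O(\Nuinit)\int_{\wmin}^{\hat w}\min\{(\infpar w)^k,1\}\,d\pld(w)$; splitting this at $w=1/\infpar$, using $k-\beta+\gamma>-1$ on the lower piece and bounding the upper piece by $\Pr[D\ge1/\infpar]=O(\infpar^{\,\beta-1-\gamma})$, gives a bound of order $\Nuinit\,\infpar^{\,\beta-1-\gamma}$ (plus a term that is $o(1)$ with no hypothesis on $\infpar$, coming from the sub-range $1/\infpar>\hat w$). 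Over the complement, $\mu_{(x,w)}$ decays like a positive power of $\|x\|^dn\gg\Nuinit$, and a routine (if somewhat longer) application of Lemmas~\ref{lem:integral}--\ref{lem:evalintegral}---using $m>1$, $k\ge 2$, $\beta<3$, and $\eta$ (hence $\gamma$) chosen small enough in terms of the model parameters---shows the contribution is $\Nuinit^{-\Omega(\eta)}$. The subcritical hypothesis then closes the argument: in case (iii), $\infpar=o(\infthr)$ with $\gamma=0$ gives $\Nuinit\,\infpar^{\,\beta-1}=o(1)$ at once; in case (v), choosing $\gamma$ small compared with $\delta$ lets the polynomial separation between $\infpar$ and $\infthr$ absorb the $\Nuinit^{O(\gamma)}$ and $\Nuinit^{O(\eta)}$ losses, again giving $\Nuinit\,\infpar^{\,\beta-1-\gamma}=o(1)$.

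The hard part will be this final evaluation: truncating the weights removes the divergence, but one still has to check that the truncated first moment reproduces the precise exponent behind $\infthr=\Nuinit^{-1/(\beta-1)}$. This is exactly where $k>\beta-1$ enters---it is what makes $\int_{\wmin}^{1/\infpar}w^{k-\beta}\,dw$ dominated by its upper endpoint and thus produces the exponent $\beta-1$ in $\Nuinit\,\infpar^{\,\beta-1}$---and, together with $k\ge2$ and $\beta<3$, it is also what keeps the truncated moment integrals arising in the far-field part controlled by their upper endpoints; everything else is bookkeeping with Lemmas~\ref{lem:integral} and~\ref{lem:evalintegral}.
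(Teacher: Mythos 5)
Your reduction and the core first--moment plan are the same as the paper's: use Lemma~\ref{lem:neighExp} and Poisson thinning to bound $\Pr\bigl[|\VS{}{}{\le 0}{v}|\ge k\bigr]$, integrate over the point process, and finish by Markov's inequality, splitting the integral into a near part ($2\Binit$) and a far part. Two things in your proposal should be fixed, the second of which is a genuine gap.

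First, the conditioning on the weight-cap event $\mathcal{K}$ is not needed, and the stated motivation is not quite right. The sum $\sum_v \Pr[|\VS{}{}{\le 0}{v}|\ge k]$ does not diverge: you already wrote $\Pr[|\VS{}{}{\le 0}{v}|\ge k]\le\min\{\mu_v^k/k!,1\}$, and the $\min\{\cdot,1\}$ is precisely what truncates the integral. Concretely, $\frac{d}{dw}\min\{(\infpar w)^k,1\}$ vanishes for $w>1/\infpar$, so the near-field integral $\int_{\wmin}^{\infty}\Nuinit w^{1-\beta+\gamma}\frac{d}{dw}\min\{(\infpar w)^k,1\}\,dw$ is already finite (it is effectively over $[\wmin,1/\infpar]$) and equals $O(\Nuinit\,\infpar^{\beta-1-\gamma})$ by Lemma~\ref{lem:evalintegral}; the ``infinite $k$-th moment of $\pld$'' is not the relevant quantity, since one integrates a probability (capped at $1$), not a $k$-th moment. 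The paper does exactly this, and the same applies verbatim to the far field. Introducing $\mathcal{K}$ is not wrong, but it buys nothing and creates the extra sub-case ``$1/\infpar>\hat w$'' and the $\Nuinit^{\pm O(\gamma)\mp O(\eta)}$ bookkeeping you then have to fight.

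Second, and more seriously, the assertion that the far-field contribution is ``$\Nuinit^{-\Omega(\eta)}$'' without any hypothesis on $\infpar$ is false: the subcritical (or at most critical) hypothesis $\infpar=O(\Nuinit^{-1/(\beta-1)})$ is used there as well. If you carry out the far-field integral (even with your weight cap), the dominant contribution comes from $r^dn=\Theta(\Nuinit)$ and weight near the crossover point, and evaluates to $O(1)\,\Nuinit^{2-\beta+O(\eta+\gamma)}(\infpar\Nuinit)^{(\beta-1+O(\eta+\gamma))/m}$ with $m=\min\{\alpha,\beta-1-\eta\}$. Without a bound on $\infpar$, say $\infpar=\Theta(1)$, this is $\Nuinit^{3-\beta+o(1)}=\omega(1)$, so the far field alone would fail. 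With $\infpar=O(\Nuinit^{-1/(\beta-1)})$ one gets $\infpar\Nuinit=O(\Nuinit^{(\beta-2)/(\beta-1)})$, and the exponent becomes $-(\beta-2)(1-1/m)+O(\eta)$, which is $-\Omega(1)$ (not merely $-\Omega(\eta)$) once $\eta$ is small -- this is exactly what the paper records, noting also that the bound holds in the critical regime (ii), which they re-use later. You should state and use the bound $\infpar=O(\Nuinit^{-1/(\beta-1)})$ in the far-field estimate, and correct the claimed exponent.
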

\begin{proof}
For any vertex $v=(\x{v},\w{v})$ with fixed weight and position let $\mu_v:=\EE[|\VS{}{\Binit}{}{v}|]$ denote its expected number of neighbours in $\Binit$.  We have shown in Lemma~\ref{lem:neighExp} that for any constant $C>1$, we have
\begin{equation}\label{eq:expNeigh}
\mu_v =O(\Nuinit)\cdot
\begin{cases}
\min\left\{\w v/\Nuinit, 1\right\}, &\text{if } \|\x{v}\| \le C(\Nuinit/n)^{1/d}/2,\\ 
\min\left\{\left(\w{v}/(\|\x{v}\|^d n)\right)^m,1\right\}&\text{if } \|\x{v}\| \geq C(\Nuinit/n)^{1/d}/2,
\end{cases}
\end{equation}
where $m= \min\{\alpha,\beta-1-\eta\}>1$. Since initially only vertices in $\Binit$ are activated, recall that the number $\VS{}{}{\le 0}{v}$ of \emph{initially active} neighbours of $v$ is Poisson distributed with mean $\infpar\mu_v$ (cf.\ Fact~\ref{fact:Poisson}). In particular, $\Pr[|\VS{}{}{\le 0}{v}| \geq k] = \Pr[\Poi{\infpar\mu_v} \geq k] = O(1)\min\{(\infpar\mu_v)^k,1\}$. Clearly, we can bound the number $|\VS{}{}{=1}{}|$ of vertices that turn active in round $1$ by the number of vertices that have at least $k$ neighbours in $\VS{}{}{\le 0}{}$. (It is only an upper bound since the latter also counts vertices which were already in $\VS{}{}{\le 0}{}$.)  

So let us first consider the contribution $n_{\mathrm{in}}:=|\VS{}{2\Binit}{=1}{}|$ of vertices $v=(\x{v},\w{v})$ inside of $2\Binit$. By~\eqref{eq:expNeigh} these satisfy $\mu_v = O(\w v)$, and thus by Lemma~\ref{lem:integral} and~\ref{lem:evalintegral} we obtain 
\begin{equation}\label{eq:roundsupper1}
\EE[n_{\mathrm{in}}] = O(1)\int_{0}^\infty \Nuinit w^{1-\beta+\gamma}\frac{d}{dw}\min\{(\infpar w)^k,1\} dw = O(\Nuinit \infpar^{\beta-1-\gamma}) = o(1),
\end{equation}
where $\gamma=0$ in case of a strong power-law, and otherwise $\gamma$ is an arbitrary positive constant.

On the other hand, to estimate the contribution $n_{\mathrm{out}}:=|\VS{}{(\Space\setminus 2\Binit)}{=1}{}|$ of vertices $v=(\x{v},\w{v})$ outside of $2\Binit$, we may use $\mu_v = O(\Nuinit)(\w{v}/(\|\x{v}\|^d n))^{m}$ by~\eqref{eq:expNeigh}. Furthermore, since each such vertex has distance at least $(\Nuinit/n)^{1/d}$ from the origin, Lemma~\ref{lem:integral} and Lemma~\ref{lem:evalintegral} with $\tilde w = r^d n/(\infpar\Nuinit)^{1/m}$ imply
\begin{align*}
\EE[n_{\mathrm{out}}] & = O(1)\int_{(\Nuinit/n)^{1/d}}^{\infty} r^{d-1}n \int_{0}^\infty w^{1-\beta+\eta}\frac{d}{dw}\min\left\{\left(\infpar\Nuinit\left(\frac{w}{r^dn}\right)^{m}\right)^k,1\right\} dw dr\\
& = O(1) \int_{(\Nuinit/n)^{1/d}}^{\infty} r^{d-1}n \left(\frac{r^dn}{(\infpar\Nuinit)^{1/m}}\right)^{1-\beta+\eta} dr \\
& = O(1)\Nuinit^{2-\beta+\eta}(\infpar\Nuinit)^{(\beta-1+\eta)/m}.
\end{align*}
Now we use that $\infpar = O(\Nuinit^{-1/(\beta-1)})$. Observe that this bound holds both in case (iii) and (v), and that it even holds for the critical case (ii). We derive $\infpar\Nuinit = O(\Nuinit^{(\beta-2)/(\beta-1)})$, and hence $\EE[n_{\mathrm{out}}] = \Nuinit^{-(\beta-2)(1-1/m) + O(\eta)}$. Thus, since $m>1$, if $\eta > 0$ is small enough we have 
\begin{equation}\label{eq:nout}
\EE[n_{\mathrm{out}}] = o(1).
\end{equation}
We will later use the fact that this also holds in the critical regime (ii).

Together~\eqref{eq:roundsupper1} and~\eqref{eq:nout} show that $\EE[|\VS{}{}{=1}{}|] = o(1)$, and thus by Markov's inequality whp no vertices turned active in round $1$, as claimed.
\end{proof}
 
\subsection*{Critical regime: (ii).} We first show that with constant probability no further vertices are ever activated. Observe that we are in the case of a strong power-law and here we set $\whlow{0}{}:=\Nuinit^{1/(\beta-1)}$.\footnote{In other words, for strong power-laws the quantity $w_0$ is defined without ``$+\eta$'' in the exponent.}  

\begin{claim}\label{claim:critsub}
$\VS{}{}{\le1}{} = \VS{}{}{\le0}{}$ with probability $\Omega(1)$.
\end{claim}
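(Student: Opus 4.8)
The plan is to reuse the computation from Claim~\ref{claim:sub} essentially verbatim, but to extract from it a statement about \emph{constant} probability rather than high probability, exploiting that in the critical regime $\infpar = \Theta(\infthr) = \Theta(\Nuinit^{-1/(\beta-1)})$. Recall that in the proof of Claim~\ref{claim:sub} we bounded $\EE[|\VS{}{}{=1}{}|] = o(1)$ using only the inequality $\infpar = O(\Nuinit^{-1/(\beta-1)})$, which remains valid in case (ii). Hence, exactly as there, $\EE[|\VS{}{}{=1}{}|] = o(1)$, and by Markov's inequality $\VS{}{}{\le 1}{} = \VS{}{}{\le 0}{}$ whp, which is more than enough to give probability $\Omega(1)$.

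However, there is a subtlety: the event $\VS{}{}{\le 1}{} = \VS{}{}{\le 0}{}$ (no new vertex activated in round~$1$) does not by itself imply $\VS{}{}{\le \infty}{} = \VS{}{}{\le 0}{}$, since the bootstrap process is deterministic given $\VS{}{}{\le 0}{}$, and once no vertex is added in one round, no vertex is ever added. So in fact $\VS{}{}{\le 1}{} = \VS{}{}{\le 0}{}$ \emph{does} imply $\VS{}{}{\le \infty}{} = \VS{}{}{\le 0}{}$ — the process has reached a fixed point. Therefore the argument of Claim~\ref{claim:sub} already gives what we want: whp (and a fortiori with probability $\Omega(1)$) the process halts immediately, so $\VS{}{}{\le 1}{} = \VS{}{}{\le 0}{}$. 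The only thing to check is that nothing in the derivation of~\eqref{eq:roundsupper1} and~\eqref{eq:nout} used strict subcriticality; for~\eqref{eq:roundsupper1} we have $\Nuinit \infpar^{\beta-1-\gamma} = \Nuinit^{1 - (\beta-1-\gamma)/(\beta-1)} = \Nuinit^{-\Omega(1)+O(\gamma)} = o(1)$ for $\gamma$ small (strong power-law: $\gamma = 0$ directly), and for~\eqref{eq:nout} the bound $\EE[n_{\mathrm{out}}] = \Nuinit^{-(\beta-2)(1-1/m)+O(\eta)} = o(1)$ was explicitly noted to hold in the critical regime.

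So the proof is short: cite Claim~\ref{claim:sub} (noting its proof only used $\infpar = O(\infthr)$, which holds in regime~(ii)), conclude $\VS{}{}{\le 1}{} = \VS{}{}{\le 0}{}$ whp, hence with probability $\Omega(1)$; and observe that no activation in round~$1$ means the process is already at a fixed point, so $\VS{}{}{\le \infty}{} = \VS{}{}{\le 0}{}$. I do not expect any real obstacle here — the claim is deliberately weak (probability $\Omega(1)$, not $1-o(1)$), and the heavy lifting was done in Claim~\ref{claim:sub}. The only mild care needed is phrasing: we should point out, as in the proof of Claim~\ref{claim:sub}, that the bound $\infpar = O(\Nuinit^{-1/(\beta-1)})$ used there covers case~(ii) as well, so that we may genuinely invoke that computation rather than rerun it. The matching lower bound for the critical regime — that $|\VS{}{}{\le\infty}{}| = \Theta(n)$ with probability $\Omega(1)$ — is a separate statement handled elsewhere (via the event $|\heavyVertices{0}{\le 1}| \geq k$ of Theorem~\ref{thm:speedlower}), and is not part of this claim.

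\begin{proof}
This is immediate from the proof of Claim~\ref{claim:sub}. There we only used the inequality $\infpar = O(\Nuinit^{-1/(\beta-1)})$, which holds in the critical regime~(ii) as well (indeed it was explicitly remarked that both~\eqref{eq:roundsupper1} and~\eqref{eq:nout} remain valid in this case). Hence $\EE[|\VS{}{}{=1}{}|] = o(1)$, and by Markov's inequality whp $\VS{}{}{\le 1}{} = \VS{}{}{\le 0}{}$; in particular this holds with probability $\Omega(1)$. Since the bootstrap process is deterministic given $\VS{}{}{\le 0}{}$, if no vertex is activated in round~$1$ then $\VS{}{}{\le i}{} = \VS{}{}{\le 0}{}$ for all $i \geq 0$, so $\VS{}{}{\le\infty}{} = \VS{}{}{\le 0}{}$. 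This proves the claim.
\end{proof}
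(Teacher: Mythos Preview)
Your argument has a genuine gap: the bound $\EE[n_{\mathrm{in}}]=o(1)$ from~\eqref{eq:roundsupper1} does \emph{not} carry over to the critical regime. Your arithmetic slipped exactly at the key point: with $\gamma=0$ (strong power-law, which is the setting of~(ii)) and $\infpar=\Theta(\Nuinit^{-1/(\beta-1)})$ we get
\[
\Nuinit\,\infpar^{\beta-1}=\Theta\bigl(\Nuinit\cdot\Nuinit^{-1}\bigr)=\Theta(1),
\]
not $o(1)$; equivalently, the exponent you wrote as ``$-\Omega(1)+O(\gamma)$'' is actually $1-(\beta-1-\gamma)/(\beta-1)=\gamma/(\beta-1)$, which vanishes at $\gamma=0$. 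So $\EE[|\VS{}{}{=1}{}|]=\Theta(1)$ in the critical case, and Markov's inequality gives you nothing. (Note also that the paper only flagged~\eqref{eq:nout} as valid in regime~(ii), not~\eqref{eq:roundsupper1}.)

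The paper's proof gets around this by conditioning on an $\Omega(1)$-probability event that \emph{makes} the remaining expectation small. Namely, set $w_0=\Nuinit^{1/(\beta-1)}$ and for a small constant $\xi>0$ let $\mathcal A=\{\VS{\ge\xi w_0}{2\Binit}{}{}=\emptyset\}$. Since $\EE[|\VS{\ge\xi w_0}{2\Binit}{}{}|]=O(\Nuinit(\xi w_0)^{1-\beta})=O(\xi^{1-\beta})$ and this count is Poisson, $\Pr[\mathcal A]=\Omega(1)$. On $\mathcal A$, every vertex in $2\Binit$ has weight at most $\xi w_0$, and the same integral as in~\eqref{eq:roundsupper1}, now truncated at $\xi w_0$, gives
\[
\EE\bigl[\bigl|\VS{\le\xi w_0}{2\Binit}{=1}{}\bigr|\ \big|\ \mathcal A\bigr]=O\bigl(\Nuinit(\xi w_0)^{1-\beta}(\infpar\xi w_0)^k\bigr)=O(\xi^{k+1-\beta}).
\]
Since $k\ge 2>\beta-1$, choosing $\xi$ small makes this at most $1/2$, and Markov finishes the job inside $2\Binit$; outside $2\Binit$ you may use~\eqref{eq:nout} as you noted. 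The essential idea you are missing is that the $\Theta(1)$ expectation in~\eqref{eq:roundsupper1} is dominated by vertices of weight $\Theta(w_0)$, and these can be excluded with constant probability.
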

\begin{proof}
First observe that~\eqref{eq:nout} also holds in this regime, i.e.\ by Markov's inequality whp no vertex outside of $2\Binit$ is active in round $1$. Let $\xi>0$ be a (small) constant, to be determined later. Note that $|\VS{\ge\xi \whlow{0}{}}{2\Binit}{}{}|$ is Poisson distributed (cf.\ Fact~\ref{fact:Poisson}) with mean $O(\Nuinit (\xi \whlow{0}{})^{1-\beta}) = O(1)$, since $\xi=\Omega(1)$. Therefore the event $\mathcal{A}:=\{\VS{\ge\xi \whlow{0}{}}{2\Binit}{}{}=\emptyset\}$ occurs with probability at least $\exp[-O(1)]=\Omega(1)$. Consequently it suffices to show that $\VS{\le \xi\whlow{0}{}}{2\Binit}{\le1}{}  = \VS{}{}{\le0}{}$ with probability $\Omega(1)$ if we condition on the event $\mathcal{A}$.\smallskip

Since every vertex $v=(\x{v},\w{v})$ satisfies $\EE[|\VS{}{\Binit}{}{v}|]\le \EE[|\VS{}{}{}{v}|] = O(\w v)$, by~\eqref{eq:marginal1}, the number of neighbours in $\VS{}{}{\le 0}{}$ is dominated by a Poisson distributed random variable with mean $O(\infpar \w v)$ (cf.\ Fact~\ref{fact:Poisson}). Observe that this upper bound remains valid if we condition on the event $\mathcal{A}$, since this can only decrease the expected degree of $v$. Therefore we obtain 
\[
\Pr\left[|\VS{}{}{\le 0}{v}| \geq k\cond \mathcal{A}\right] = O(1)\min\{(\infpar\w v)^k,1\} = O((\infpar\w v)^k),
\]
and by Lemma~\ref{lem:integral} it follows that
\begin{align*}
\EE\left[|\VS{\le \xi\whlow{0}{}}{2\Binit}{=1}{}|\cond\mathcal{A}\right] & = O\left(\Nuinit (\xi\whlow{0}{})^{1-\beta}(\infpar  \xi\whlow{0}{})^k + \int_{0}^{ \xi\whlow{0}{}} \Nuinit w^{1-\beta}\frac{d}{dw} (\infpar w)^k dw\right) \\
& = O(1)\Nuinit ( \xi\whlow{0}{})^{1-\beta}(\infpar  \xi\whlow{0}{})^k = O(\xi^{k+1-\beta}),
\end{align*}
 where all the hidden constants are independent of $\xi$. Now note that we may choose $\xi>0$ small enough such that $\EE\left[|\VS{\le \xi\whlow{0}{}}{2\Binit}{=1}{}|\cond \mathcal{A}\right] \le 1/2$, and then by Markov's inequality $|\VS{\le \xi\whlow{0}{}}{2\Binit}{=1}{}|=0$ with conditional probability at least $1/2$. Thus $\VS{}{}{\le 1}{}=\VS{}{}{\le 0}{}$ with probability $\Omega(1)$, and the claim follows.
\end{proof} 

Next we show that with constant probability at least $k$ heavy vertices will be activated in the first round. Afterwards, the remaining steps will be identical with the supercritical regime, so we prove them together, cf. below.
\begin{claim}\label{claim:critsuper}
$|\VS{\ge \whlow{0}{}}{\Binit}{=1}{}|\ge k$ with probability $\Omega(1)$.
\end{claim}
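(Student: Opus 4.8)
The plan is to show that already a single heavy vertex inside $\Binit$ is activated in round~$1$ with at least constant probability, and then to conclude by independence that $k$ of them are activated simultaneously with constant probability. First I would note that in the critical regime $\infpar = \Theta(\infthr) = \Theta(\Nuinit^{-1/(\beta-1)})$, and that $\heavyVertices{0}{} = \VS{\ge \whlow{0}{}}{\Blow 0}{}{}$ with $\whlow{0}{} = \Nuinit^{1/(\beta-1)}$, where $\Blow 0 = \Binit$. Since the weights follow a strong power-law, the expected number of such heavy vertices is $\EE[|\VS{\ge \whlow 0{}}{\Binit}{}{}|] = \Theta(\Nuinit \whlow 0{}^{1-\beta}) = \Theta(1)$, and this count is Poisson distributed, so with probability $\Omega(1)$ there are at least $k$ vertices in $\heavyVertices 0{}$. (If one prefers not to rely on this, one can instead pick a slightly lower weight threshold $w'' = c\,\whlow 0{}$ for a small enough constant $c$, for which the expected number of vertices above $w''$ inside $\Binit$ is an arbitrarily large constant, hence at least $k$ whp; this does not affect the argument below since $w''/\whlow 0{} = \Theta(1)$.)

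Next I would fix one such heavy vertex $v = (\x v, \w v)$ with $\x v \in \Binit$ and $\w v \ge \whlow 0{} = \Nuinit^{1/(\beta-1)}$, and show $\Pr[v \in \VS{}{}{=1}{}] = \Omega(1)$. We may assume $\w v \le \Nuinit$, since larger weights only help. Exactly as in the proof of part~(b) of Theorem~\ref{thm:speedlower}: by the edge-probability conditions~\eqref{eq:edgeprob1} and~\eqref{eq:edgeprob2}, there is a ball $B$ around $\x v$ of volume $\Theta(\w v/n)$ (for $\alpha=\infty$ possibly smaller by a constant factor) such that every vertex in $B$ connects to $v$ with probability $\Omega(1)$. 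Since $\w v \le \Nuinit = n\V{\Binit}$, a constant fraction of $B$ lies inside $\Binit$, so $\EE[|\VS{}{}{\le 0}{v}|] = \Omega(\infpar \w v) = \Omega(\Nuinit^{-1/(\beta-1)} \cdot \Nuinit^{1/(\beta-1)}) = \Omega(1)$. Because $|\VS{}{}{\le 0}{v}|$ is Poisson distributed (it is a Poisson number thinned by the independent initial-infection coin flips), $\Pr[|\VS{}{}{\le 0}{v}| \ge k] = \Omega(1)$, and whenever this happens $v$ is activated in round~$1$.

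Finally I would combine these two facts. Condition on the (positive-probability) event that $\heavyVertices 0{}$ contains at least $k$ vertices $v_1,\dots,v_k$, and reveal their positions. For each $v_j$, the event $\{v_j \in \VS{}{}{=1}{}\}$ depends only on the initial-infection coin flips of vertices in the ball $B_j$ around $v_j$; but in fact we only need the events ``$v_j$ has $\ge k$ initially infected neighbours inside $B_j$,'' and each of these has probability $\Omega(1)$ as shown. The main obstacle is that the balls $B_1,\dots,B_k$ need not be disjoint, so these events are not independent; however, since $k$ is a constant, this is handled easily — either by a Harris/FKG-type positive-correlation argument (each event is increasing in the set of initially infected vertices and in the edge set), or more simply by conditioning on the positions being pairwise at distance $\Omega((\Nuinit/n)^{1/d})$ — an event of probability $\Omega(1)$ given $|\heavyVertices 0{}| \ge k$, since $\Binit$ has volume $\Nuinit/n$ — which makes the balls $B_j$ disjoint and the activation events genuinely independent, each of probability $\Omega(1)$. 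Either way we conclude $\Pr[v_1,\dots,v_k \in \VS{}{}{=1}{}] = \Omega(1)$, hence $|\VS{\ge \whlow 0{}}{\Binit}{=1}{}| \ge k$ with probability $\Omega(1)$, as claimed.
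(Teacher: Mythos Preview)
Your proposal is correct and follows essentially the same strategy as the paper: first find $k$ heavy vertices in $\Binit$ with probability $\Omega(1)$ (using that their count is Poisson with mean $\Theta(1)$), then around each heavy vertex $v_j$ place a small ball in which every vertex connects to $v_j$ with probability $\Omega(1)$, observe that the expected number of initially active vertices in each such ball is $\Omega(\infpar\whlow{0}{}) = \Omega(1)$, and finally combine over $j$. The paper handles the correlation in step~4 by a two-stage argument: it uses balls $K_j$ of the \emph{fixed} volume $\whlow{0}{}/n$ (not $\w{v_j}/n$), invokes positive association for the events $\{|\VS{}{K_j}{\le 0}{}|\ge k\}$, and then uses that the edge events from the different $v_j$ are conditionally independent.

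One small caveat on your option~(b): as written, the balls $B_j$ have volume $\Theta(\w{v_j}/n)$, and since you only assumed $\w{v_j}\le \Nuinit$, their radii can be comparable to that of $\Binit$, so forcing the positions pairwise $\Omega((\Nuinit/n)^{1/d})$ apart does not guarantee disjointness. This is harmless---either switch to balls of the fixed volume $\whlow{0}{}/n = \Nuinit^{1/(\beta-1)}/n = o(\V{\Binit})$, exactly as the paper does, or simply rely on your FKG/Harris option~(a), which is valid as stated since the events ``$v_j$ has $\ge k$ initially infected neighbours'' are increasing in both the Poisson point process and the edge set.
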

\begin{proof}
We first consider the case $\alpha <\infty$. Note that by Fact~\ref{fact:Poisson}, the number of vertices in $\Binit$ of weight at least $\whlow{0}{}=\Nuinit^{1/(\beta-1)}$ is Poisson distributed with mean $\Theta(\Nuinit w_0^{1-\beta}) = \Theta(1)$. In particular, with probability $\Omega(1)$ there are at least $k$ such vertices. So assume this event holds, and let $v_1,\dots,v_k$ be $k$ distinct such vertices. For each $1\leq i\leq k$, denote by $K_i$ the intersection of $\Binit$ with the ball of volume $\Nuinit^{1/(\beta-1)}/n$ around $v_i$. Note that $n\V{K_i} = \Omega(\Nuinit^{1/(\beta-1)})$. The number of vertices in $\VS{}{K_i}{\le 0}{}$ is Poisson distributed (cf.\ Fact~\ref{fact:Poisson}) with mean $\infpar n\V{K_i}$, so in particular $\Pr[|\VS{}{K_i}{\le 0}{}| \geq k] = \Omega(1)$. Note that the events $\eventK{i} :=\{|\VS{}{K_i}{\le 0}{}| \geq k\}$ are positively associated for different $i$, i.e, conditioning on the events $\eventK{i_1}, \ldots,\eventK{i_s}$ does not decrease the probability of $\eventK{i}$ for any subset of (distinct) indices $i_1,\ldots, i_s, i$. Hence,
\[
\Pr[\forall i\in \{1,\ldots k\}:\eventK{i}] \geq \prod_{i=1}^k \Pr[\eventK{i}] = \Omega(1)
\]
by the law of conditional probability. 

Recall that, conditioned on position and weight of the vertices, the family of edge indicator random variables is independent. Now condition on the events $\eventK{i}$ and on positions and weights of $v_1,\ldots,v_k$, fix $k$ distinct vertices $u_1^{(i)},\dots,u_k^{(i)}\in \VS{}{K_i}{\le 0}{}$ for each $i$, and condition on all their positions and weights. Then for all $1 \leq i,j\leq k$ the probability for the edge $\{v_i,u_j^{(i)}\}$ edge to appear is uniformly bounded from below by $\Omega(\min\{(w_0/\Nuinit^{1/(\beta-1)})^\alpha,1\}) = \Omega(1)$. Since this is independent for all $1\leq i,j \leq k$, the conditional probability that all these $k^2$ edges appear is still $\Omega(1)$, in which case $v_i \in \VS{}{}{\le 1}{}$ for all $1\leq i \leq k$. Note that this holds uniformly for all weights and positions of the $v_i$ and $u_j^{(i)}$, so we also get $\Pr[\forall i\colon v_i \in \VS{}{}{\le 1}{} \mid \forall i: \eventK{i}] = \Omega(1)$. 

Furthermore, we observe that the number of initially infected vertices in $\Binit$ of weight at least $w_0$ is Poisson distributed (cf.\ Fact~\ref{fact:Poisson}) with mean $\Theta(\infpar \Nuinit w_0^{1-\beta})=o(1)$, and therefore the probability that there is such a vertex is $o(1)$. Consequently, the claim follows by taking a union bound in the case of $\alpha>\infty$.

The case $\alpha=\infty$ is completely analogous, except that it may be necessary to shrink the balls around $v_1,\ldots,v_k$ by at most a constant factor, so that still every vertex in the $i$-th ball has probability $\Omega(1)$ to connect to $v_i$. Since this only decreases the expected number of (active) vertices in each ball by constant factors, the remaining proof stays the same. We omit the details.
\end{proof}
 
\subsection*{Supercritical regime: (i), (iv).}\label{sec:super}
In this proof we also include the critical regime (ii), provided that at least $k$ heavy vertices got activated in the first round, i.e.\  $|\VS{\ge \whlow{0}{}}{\Binit}{=1}{}|\ge k$, where as before $\whlow{0}{}=\Nuinit^{1/(\beta-1)}$. 
\begin{claim}\label{claim:super}
Let $\delta>0$ be a constant, then we have $|\VS{}{}{\le (1+\delta) i_\infty}{}|=\Omega(n)$ whp.
\end{claim}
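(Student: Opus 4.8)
The plan is to upgrade the first‑moment bound just established into a \textbf{whp} statement by a block‑wise second‑moment argument; throughout, $\eps>0$ is a constant and $\eta=\eta(\eps)>0$ is taken sufficiently small, as in the rest of this section. Let $i_1$ be the smallest index with $\Nulow{i_1}\ge n$, so that $\Blow{i_1}=\Space$ and, since $\Nulow{i_1}\in[n,n^{\zeta}]$, one has $i_1=(1+O(\eps))\,(\log\log_\Nuinit n)/|\log(\beta-2)|+O(1)$. By Theorem~\ref{thm:speedlower}(b) the events $\eventH 0,\dots,\eventH{i_1}$ hold whp, and the first step is to use Theorem~\ref{thm:speedlower}(c) with $i=i_1$ and a suitable constant $\ell^\dagger=O(1)$ to infer that \emph{every} vertex of weight at least $W^\dagger:=\whlow{i_1}{\ell^\dagger}=n^{\Theta(1)}$ lies in $\VS{}{}{\le r_0}{}$, where $r_0:=i_1+3+\ell^\dagger$: each such vertex fails with probability $\exp[-\Nulow{i_1}^{\Omega(1)}]=\exp[-n^{\Omega(1)}]$, so a union bound over the (at most $O(n)$ in expectation) vertices of weight $\ge W^\dagger$ shows that whp they are all active by round $r_0$. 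Write $\mathcal F$ for this whp event (that $\eventH 0,\dots,\eventH{i_1}$ hold and every vertex of weight $\ge W^\dagger$ is active by round $r_0$).

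Next I would partition $\Space=[0,1]^d$ into $\Theta(1/v_0)$ congruent cubes $B_1,\dots,B_M$ of volume $\Theta(v_0)$, where $v_0=v_0(n)\to 0$ is chosen slowly (say $v_0=1/\log n$) and small enough that $W^\dagger\le \wmaxlow(n v_0)=(n v_0)^{1/(\beta-1+\eta)}$ and $n v_0=\omega(\log M)$. For each $j$, \emph{define} a block process $\widetilde A_j(\cdot)$ living inside $B_j$: put $\widetilde A_j(r_0):=\{v\in\VS{}{B_j}{}{}:\w v\ge \wmaxlow(n v_0)\}$ and iterate the bootstrap rule using only edges with both endpoints in $B_j$. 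Two facts drive the argument. First, on $\mathcal F$ one has $\widetilde A_j(r_0)\subseteq \VS{}{}{\le r_0}{}$ (its vertices have weight $\ge W^\dagger$), hence $\widetilde A_j(r)\subseteq \VS{}{}{\le r}{}$ for all $r\ge r_0$ by monotonicity of the process. Second, given all positions and weights, the processes $\widetilde A_1,\dots,\widetilde A_M$ are functions of pairwise disjoint sets of potential edges, hence mutually independent. Since $\widetilde A_j$ starts with \emph{all} of $B_j$'s vertices of weight $\ge\wmaxlow(n v_0)$ active, Remark~\ref{rem:speedlower} (whose cascade is internal to the ball it is applied to, so that it applies verbatim to $\widetilde A_j$) gives, for $\ell_2$ minimal with $(nv_0)^{(\zeta-\eps)^{-\ell_2}/(\beta-1+\eta)}\le C_0$ — so that $(nv_0)^{(\zeta-\eps)^{-\ell_2}}=\Theta(1)$ and $\ell_2=(1+O(\eps))\,(\log\log n)/|\log(\beta-2)|+O(1)$ — the bound $\Pr[v\in\widetilde A_j(r_0+\ell_2)]\ge 1-\exp[-C_1(nv_0)^{C_2(\zeta-\eps)^{-\ell_2}}]\ge p_0$ for every $v$ with $\x v\in B_j$ and $\w v\ge C_0$, where $p_0>0$ is an absolute constant.

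Put $r^*:=r_0+\ell_2$. From the estimates for $i_1$ and $\ell_2$ together with $\log(\zeta-\eps)=|\log(\beta-2)|(1-O(\eps))$ one checks $r^*\le(1+\eps)i_\infty$ for large $n$ (after shrinking $\eps$ slightly at the outset, which is harmless since $i_\infty\to\infty$). Let $X_j:=|\{v\in\VS{}{B_j}{}{}:\w v\ge C_0,\ v\in\widetilde A_j(r^*)\}|$ and $N_j:=|\{v\in\VS{}{B_j}{}{}:\w v\ge C_0\}|$. Given the positions and weights, the $X_j$ are independent with $0\le X_j\le N_j$ and $\EE[X_j]\ge p_0 N_j$, so $\EE[\sum_j X_j]\ge p_0\sum_j N_j$ and $\operatorname{Var}(\sum_j X_j)\le\sum_j N_j^2\le(\max_j N_j)\sum_j N_j$. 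Each $N_j$ is Poisson with mean $\Theta(n v_0)$ and $\sum_j N_j$ is Poisson with mean $\Theta(n)$, so whp $\max_j N_j=O(nv_0)$ and $\sum_j N_j=\Theta(n)$, whence $\operatorname{Var}(\sum_j X_j)=O(n^2 v_0)=o(n^2)$. Chebyshev's inequality then yields $\sum_j X_j\ge\tfrac{p_0}{2}\sum_j N_j=\Omega(n)$ whp; since on $\mathcal F$ the $X_j$ count disjoint subsets of $\VS{}{}{\le r^*}{}$, this gives $|\VS{}{}{\le(1+\eps)i_\infty}{}|\ge|\VS{}{}{\le r^*}{}|\ge\sum_j X_j=\Omega(n)$ whp, as claimed.

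I expect the decoupling across blocks to be the only genuine obstacle. Conditioning on the global event $\mathcal F$ would correlate the within‑block edges — by FKG, conditioning on the increasing event $\mathcal F$ inflates their presence — and this correlation would wreck the variance bound. The remedy is to avoid conditioning for the concentration step altogether: the block processes $\widetilde A_j$ are \emph{defined} from the deterministic initialization ``all vertices of weight $\ge\wmaxlow(nv_0)$ in $B_j$ are active'', which makes them honestly independent functions of disjoint edge sets, and the whp event $\mathcal F$ (from Theorem~\ref{thm:speedlower}(b) and the union bound above) is invoked only to guarantee $\widetilde A_j(r_0)\subseteq\VS{}{}{\le r_0}{}$, i.e. that the block processes lower‑bound the true one. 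The remaining points — verifying the timing $r^*\le(1+\eps)i_\infty$, and that Remark~\ref{rem:speedlower} really does apply to the block‑restricted cascade — are routine.
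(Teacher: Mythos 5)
Your argument is correct, and it takes a genuinely different route from the paper's, with a cleaner handling of the decoupling across regions. The paper splits the time budget the other way around: it runs the \emph{global} phase for almost the entire budget, $j=(1+\eps/2)i_\infty$ rounds, so as to infect (whp) all vertices of weight $\ge h^{1/(\beta-1+\eta)}$ where $h=\omega(1)$ is chosen very slowly growing ($\log\log h = o(\log\log n)$), and then partitions $\Space$ into \emph{small} balls $Q_1,\dots,Q_s$ of volume $\Theta(h/n)$, calling a ball ``good'' if at least half of its weight-$\ge h^{1/(\beta-1+\eta)}$ vertices are active at time $j$; a Markov argument gives that whp at least half the balls are good, and then Remark~\ref{rem:speedlower} is applied inside each good ball for a short local phase of $O(\log\log h)$ rounds, with a Chernoff bound to conclude. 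Your split is complementary: a short global phase of $i_1+O(1)$ rounds pushing only down to weight $n^{\Theta(1)}$, and a long local phase of $\Theta(\log\log n)$ rounds inside large cubes of volume $1/\log n$. What your version buys is a transparent independence argument: by \emph{defining} the block processes $\widetilde A_j$ from the deterministic initial condition ``all weight-$\ge\wmaxlow(nv_0)$ vertices in $B_j$'' and using only intra-block edges, you obtain honest conditional independence across $j$ (given positions and weights), and the whp event $\mathcal F$ is used only for the sandwich $\widetilde A_j\subseteq\VS{}{}{\le\cdot}{}$, not inside the concentration step. The paper instead conditions on the ``good'' events, which do depend on intra-ball edges, and asserts that the local conditional bound ``holds independently of the activity in any other ball''; making that independence claim fully rigorous essentially requires an observation equivalent to the one you make explicit. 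On the other hand, the paper's split reuses the expectation computation from the preceding claim nearly verbatim and needs only a Markov-plus-Chernoff finish, whereas yours needs the second-moment/Chebyshev step; both concentration devices are adequate here since only a constant fraction, not all, of the vertices need be captured. Minor points you already addressed correctly: $\ell^\dagger\ge 2$ is needed so that $W^\dagger\le\wmaxlow(nv_0)$ (since $\Nulow{i_1}$ can be as large as $n^{\zeta-\eps}$), and the observation that the witness cascade in Theorem~\ref{thm:speedlower}(c)/Remark~\ref{rem:speedlower} stays entirely inside the ball is exactly what licenses applying it to the edge-restricted process $\widetilde A_j$.
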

\begin{proof}
We use the parameters and notation given in Definition~\ref{def:BallsEtc}, where $\eps>0$ is sufficiently small, i.e.\ $0<\eps<\min\left\{\frac{3-\beta}{\beta-2},\delta \right\}$. Recall that $\iCompleteSpace$ denotes the smallest index $i \geq 0$ such that $\Nulow{i}\geq n$ (and thus $\Blow \iCompleteSpace = \Space$), and note that $\iCompleteSpace \leq (1+\eps/2)(\log \log_\Nuinit n)/|\log(\beta-2)|$ if $n$ is sufficiently large. Then there exists $\ell \leq (1+\eps/2)(\log \log n)/|\log(\beta-2)|$ such that $\whlow{\iCompleteSpace}{\ell} = \Nulowtruncated{\iCompleteSpace}^{(\zeta-\eps)^{-\ell}/(\beta-1+\eta)}= O(1)$. This means that Theorem~\ref{thm:speedlower} is in particular applicable for all vertices of weight $\omega(1)$ with parameters $\iCompleteSpace$ and $\ell$. More precisely, let $h= h(n) = \omega(1)$ be a function with $\log \log h = o(\log \log n)$ and set $j:= (1+\eps/2) i_\infty$. Then every vertex of weight at least $\wslow{h}:=h^{1/(\beta-1+\eta)}$ has probability $1-h^{-\Omega(1)}$ to be in $\VS{}{}{\le j}{}$. 

Now decompose the torus $\Space$ into balls $Q_1,\ldots, Q_s$ of volume $\Theta(h/n)$, where $s=\Theta(n/h)$.\footnote{This is possible since we use the $\infty$-norm. It would also suffice to consider any disjoint balls with total volume $\Omega(1)$.} Fix any such ball $Q$, and call $Q$ \emph{good} if in round $j$ at least half of the vertices in $\VS{\ge \wslow{h}}{Q}{}{}$ are active, and \emph{bad} otherwise. Since in expectation only a $o(1)$ fraction of the vertices in $\VS{\ge \wslow{h}}{Q}{}{}$ are inactive in round $j$, by Markov's inequality the probability that $Q$ is bad is $o(1)$. So in expectation only a $o(1)$ fraction of the sets $Q_1,\ldots,Q_s$ are bad, and again by Markov's inequality, whp at least half of them are good. 

Assume $Q$ is good. For the upcoming steps we consider the process as restricted within $Q$ (as mentioned in Remark~\ref{rem:speedlower}) and write $X_Q=X_Q(C,\ell')$ for the number of vertices of weight at least $C>0$ which become infected within the next $\ell'\ge 0$ additional steps. For some suitably chosen $\ell'=o(\log\log h)$ and sufficiently large constant $C>0$ it follows from Remark~\ref{rem:speedlower} that $\EE[X_Q]\ge \frac{2}{3}|\VS{\ge C}{Q}{}{}|$. Thus by 
Markov's inequality we have 
$\Pr\left[X_Q \geq \EE\left[\left|\VS{\ge C}{Q}{}{}\right|\right]/2\right] = \Omega(1).$

Because the restricted processes for $Q_\sigma$, $1\le \sigma \le s$, are independent, by a Chernoff bound, whp an $\Omega(1)$ fraction of the balls $Q_\sigma$, $1\le \sigma\le s$, satisfy $X_{Q_\sigma}\ge \EE\left[\left|\VS{\ge C}{Q_\sigma}{}{}\right|\right]/2=\Omega(h)$. Because $|\VS{\ge C}{Q_\sigma}{\le j+\ell'}{}|\ge X_{Q_\sigma}$, we have  whp $|\VS{}{}{\le j+\ell'}{}| = \Omega(s\cdot h) = \Omega(n)$. Since $j+\ell' \le (1+\eps) i_\infty\le (1+\delta) i_\infty$ for sufficiently large $n$, the claim follows. 
\end{proof}

\begin{proof}[Proof of Theorem~\ref{thm:main} and Theorem~\ref{thm:numberofrounds}]
Theorem~\ref{thm:main} is an immediate consequence of Claims~\ref{claim:sub},~\ref{claim:critsub},~\ref{claim:critsuper}, and~\ref{claim:super}, while Theorem~\ref{thm:numberofrounds} is proven by Claims~\ref{claim:secondstmt} and~\ref{claim:super}.
\end{proof}

\section{Concluding remarks}

We have shown that in the GIRG model for scale-free networks with underlying geometry, even a small region can cause an infection that spreads through a linear part of the population. We have analysed the process in great detail, and we have determined its metastability threshold, its speed, and the time at which individual vertices becomes infected. Moreover, we have shown how a policy-maker can utilise this knowledge to enforce a successful quarantine strategy. We want to emphasize that the latter result is only a proof of concept, intended to illustrate the possibilities that come from a thorough understanding of the role of the underlying geometry in infection processes. In particular, we want to remind the reader that bootstrap percolation is not a perfect model for viral infections (though it has been used to this end), but is more adequate for processes in which the probability of transmission grows more than proportional if more than one neighbours is active, like believes spreading through a social network (``What I tell you three times is true.''), or action potential spreading through a neuronal network.

Therefore, this paper is only a first step. There are many other models for the spread of an infection, most notably SIR and SIRS models for epidemiological applications, and we have much yet to learn from analysing these models in geometric power-law networks like GIRGs.  From a technical point of view, it is unsatisfactory that our analysis does not include the case $\alpha \leq \beta-1$. We believe that also in this case, the bootstrap percolation process is essentially governed by the geometry of the underlying space, only in a more complex way. Understanding this case would probably also add to our toolbox for analysing less ``clear-cut'' processes. 

\bibliographystyle{amsplain}
\bibliography{../../Koch_Lengler_GIRGBootstrapPercolation}

\end{document}